\theoremstyle{plain}
\newtheorem{Thm}{Theorem}[section]
\newtheorem{Lemma}[Thm]{Lemma}
\newtheorem{Proposition}[Thm]{Proposition}
\newtheorem{Corollary}[Thm]{Corollary}
\newtheorem{Definition}[Thm]{Definition}
\newtheorem{Example}[Thm]{Example}
\newtheorem{Remark}[Thm]{Remark}
\numberwithin{equation}{section}
\newcommand{\E}{\ensuremath{\mathbb{E}\,}}
\newcommand{\R}{\ensuremath{\mathbb{R}}}
\newcommand{\N}{\ensuremath{\mathbb{N}}}
\newcommand{\normm}[1]{{\left\vert\kern-0.25ex\left\vert\kern-0.25ex\left\vert #1 
    \right\vert\kern-0.25ex\right\vert\kern-0.25ex\right\vert}}
\newcommand{\abs}[1]{\left|\left|#1\right|\right|}
\newcommand{\norm}[1]{\left\lVert#1\right\rVert}
\renewcommand{\d}{{\rm d}}                                
\renewcommand{\Bar}[1]{\mkern 1.5mu\overline{\mkern-1.5mu#1\mkern-1.5mu}\mkern 1.5mu}
\renewcommand{\phi}{\varphi}
\providecommand{\keywords}
{
	\textbf{\textit{Keywords and phrases:}}
}
\providecommand{\amssubj}
{
	\textbf{\textit{AMS 2010 subject classification:}}
}
\author{Gergely Bodó \thanks{tomasz.kosmala@kcl.ac.uk} \qquad\qquad\qquad  Markus Riedle \thanks{markus.riedle@kcl.ac.uk} \\Department of Mathematics \\ King's College London\\ London WC2R 2LS\\ United Kingdom}
\author{Gergely Bodó \\Department of Mathematics \\ King's College  \\ London WC2R 2LS\\ United Kingdom\\ \\ gergely.bodo@kcl.ac.uk\\ 
    \and Markus Riedle\footnote{2nd affiliation: Institute of Mathematical Stochastics, Faculty of Mathematics, TU Dresden,   01062 Dresden, Germany}
     \\Department of Mathematics \\ King's College  \\ London WC2R 2LS\\ United Kingdom\\ \\ markus.riedle@kcl.ac.uk
\and Ond\v rej T\'ybl \footnote{The author was supported in part by Czech Science Foundation (GA\,\v CR) grant no./22-12790S and by the Grant schemes at Charles University, project no. CZ.02.2.69\/0.0\/0.0\/19 073/0016935. }\\     
Department of Probability and  Mathematical Statistics\\
Charles University \\
Sokolovsk\'a 49/83 186 75 Prague 8\\ 
Czech Republic\\ \\
tybl@karlin.mff.cuni.cz
 }
\title{SPDEs driven by standard symmetric $\alpha$-stable cylindrical Lévy processes: 
    existence, Lyapunov functionals and It{\^o} formula}
\begin{document}

\maketitle

\begin{abstract}
We investigate several aspects of solutions to stochastic evolution equations in Hilbert spaces driven by a standard symmetric $\alpha$-stable cylindrical noise. Similarly to cylindrical Brownian motion or Gaussian white noise, standard symmetric $\alpha$-stable noise exists only in a generalised sense in Hilbert spaces.
The main results of this work are the existence of a mild solution, long-term regularity of the solutions via Lyapunov functional approach, and an It\^{o} formula for mild solutions to evolution equations under consideration. The main tools for establishing these results are Yosida approximations and an It\^{o} formula for Hilbert space-valued semi-martingales where the martingale part is represented as an integral driven by cylindrical $\alpha$-stable noise. While these tools are standard in stochastic analysis, due to the cylindrical nature of our noise, their application requires completely novel arguments and techniques.
\end{abstract}

\begin{flushleft}
	\amssubj{60H15, 60G20, 60G52, 60H05}
	
	\keywords{cylindrical Lévy processes, stable processes, stochastic partial differential equations, It{\^o} formula, Lyapunov functional   }
\end{flushleft}

\section{Introduction}

Standard symmetric $\alpha$-stable distributions are the natural generalisations of Gaussian distributions for modelling random  perturbations of finite dimensional dynamical systems. They often meet various empirical requests, such as heavy tails, self-similarity and infinite variance, but are at the same time  analytically tractable and well-understood.
The importance of these models is reflected by the available vast literature on dynamical systems perturbed by random noises with $\alpha$-stable distributions in various areas such as economics, biology etc. 

In the infinite dimensional setting of modelling random perturbations of partial differential equations, much fewer results are known for systems perturbed by $\alpha$-stable distributions. 
In fact, only in the random field approach, based on the seminal work by Walsh, one can find several publications on stochastic partial differential equations (SPDEs) driven by multiplicative $\alpha$-stable noise, e.g.\ Mueller \cite{mueller1998heat}, Mytnik \cite{mytnik2002stochastic}, and more recently Chong \cite{Chong_stochastic_PDEs} and Chong et.\ al.\ \cite{Chong_Dalang_Humeau}. 
However, in the semigroup approach, following the spirit of Da Prato and Zabczyk, one can find several results for equations only with additive driving noise distributed according to an $\alpha$-stable law; see e.g.\ Brze\'zniak and Zabczyk \cite{Brzezniak_Zabczyk} and Riedle \cite{RI}. The only publication in the semigroup approach for multiplicative $\alpha$-stable perturbation is Kosmala and Riedle \cite{KR}, where however the assumptions are rather restrictive and do not correspond to the natural Lipschitz continuity and linear growth conditions. The lack of results in the semigroup approach is due to the fact that a random noise with a standard symmetric $\alpha$-stable distribution does not exist as an ordinary  Hilbert space-valued process but only in the generalised sense of Gel'fand and Vilenkin \cite{GV} or Segal \cite{S}. 

In this work, we investigate several aspects of solutions to equations of the form
\begin{equation}
    {\rm d}X(t)=\big( AX(t) + F(X(t))\big)\, {\rm d}t + G(X(t-))\, {\rm d}L(t),
    \label{introduction_equation}
\end{equation}
where $A$ is the generator of a $C_0$-semigroup in a separable Hilbert space $H$, the coefficients $F\colon H\to H$ and  $G\colon H\to\mathcal{L}_2(U,H)$ are mappings with $U$ being a separable Hilbert space, and $L$ is a standard symmetric $\alpha$-stable cylindrical process in $U$ for $\alpha\in (1,2)$.

Analogously to the standard normal distribution, standard symmetric $\alpha$-stable distributions in $\R^d$ can only be generalised to infinite dimensional spaces as cylindrical distributions. In particular, this means that the driving noise $L$ in \eqref{introduction_equation} exists only in the generalised sense;  see Schwartz \cite{Schwartz}. Since such processes do not attain values in the underlying Hilbert space, standard results for stochastic processes in infinite dimensional spaces are not applicable. Most notably, complications arise from the fact that while these processes are cylindrical semi-martingales, see Jakubowski et.\ al.\ \cite{JKFR}, they do not enjoy a semi-martingale decomposition in a cylindrical sense, since semi-martingale decompositions are not invariant under linear transformations, see Jakubowski and Riedle \cite[Re.\ 2.2]{JR}. Nevertheless, the problem of stochastic integration with respect to cylindrical L\'evy processes was solved in Jakubowski and Riedle \cite{JR} by arguments avoiding the usual   L\'evy-It{\^o} decomposition. This approach has been further developed for standard symmetric $\alpha$-stable cylindrical process by two of us in Bod\'o and Riedle \cite{BR}, which enables us to integrate predictable integrands and to derive a dominated convergence theorem for stochastic integrals.

This work comprises of 3 main results: the existence of a mild solution to Equation \eqref{introduction_equation}, a Lypunov functional approach for long-term regularity for solutions to  Equation \eqref{introduction_equation}, and 
an It{\^o} formula for mild solutions to  Equation \eqref{introduction_equation}.  The main tools for establishing these results are an It{\^o} formula for Hilbert space-valued semi-martingales driven by standard symmetric $\alpha$-stable cylindrical L\'evy noise and a Yosida approximation of solutions to Equation \eqref{introduction_equation}. While these tools are standard in stochastic analysis, due to the cylindrical nature of our noise, their application in our setting requires completely novel arguments and techniques, which we highlight in the following.

A classical It{\^o}  formula for semi-martingales in Hilbert spaces is well known and easy to derive; see e.g.\ Metivier \cite[Th.\ 27.2]{ME}. However, applying this formula often requires the identification of the martingale and bounded variation components of the process, which in the classical situation of a semi-martingale driven by an ordinary Hilbert space-valued process  can easily be obtained via the semi-martingale decomposition of the driving process. Since in our case, the driving cylindrical process does not enjoy a semi-martingale decomposition, one needs to identify the martingale part of the stochastic integral process by carrying out a deep analysis of its jump structure.

The second major tool in our work is a Yosida approximation, which is an often-utilised device in the classical situation with an ordinary Hilbert space-valued process as driving noise; see e.g.\ Peszat and Zabczyk \cite{PZ}. 
Convergence of the Yosida approximation is established by tightness arguments in the space $\mathcal{C}([0,T], L^p(\Omega,H))$ of  $p$-th mean continuous Hilbert space-valued processes for any $p<\alpha$. It turns out that the space  $\mathcal{C}([0,T], L^p(\Omega,H))$ is tailor-made for analysing equations driven by a standard symmetric $\alpha$-stable cylindrical process. The observation that the solution is continuous in the above sense, despite having discontinuous paths, lies at the heart of this paper. To the best of our knowledge, we are the first to use this in the context of SPDEs driven by cylindrical stable noise.

These two tools, the It{\^o} formula for semi-martingales driven by a standard symmetric $\alpha$-stable cylindrical process
and convergence of the Yosida approximation,  enable us to establish the 3 main results of our work. For the existence result, we use tightness of the Yosida approximation to establish existence of a mild solution to Equation \eqref{introduction_equation}. In our setting, standard methods for establishing existence of a solution, such as fix point arguments or Grönwall's lemma are not applicable, since the integral operator with a standard symmetric $\alpha$-stable integrator maps to a larger space than its domain; see Kosmala and Riedle \cite{KR} or Rosinski and Woyczynski \cite{RW}.

By following the classical approach of Ichikawa in \cite{ICH},
we demonstrate the power of the established tools by investigating the long-term regularity of the mild solution to Equation \eqref{introduction_equation} via the functional Lyapunov approach. The functional Lyapunov approach can be used to establish various regularity properties; in this work, we focus on exponential ultimate boundedness, but other quantitative properties can be investigated similarly.  
As the mild solution is not a semi-martingale, the derived It{\^o} formula for semi-martingales cannot be applied directly. However, we successfully show that the Yosida approximations are semi-martingales, and thus the It{\^o} formula can be applied to these, which immediately shows their exponential ultimate boundedness. It remains only to show that this boundedness property carries over to the limit, for which we establish convergence of the Markov generators in a suitable sense.

Mild solutions of SPDEs are not semi-martingales, and thus the classical It{\^o} formula cannot be applied. 
This lack of a powerful tool is often circumvented by a specific  It{\^o} formula for mild solutions of SPDEs. One of the first versions of such an It{\^o} formula for mild solutions can be found in  Ichikawa \cite{ICH} for the Gaussian case, 
and more recent versions in Da Prato et.\ al.\ \cite{da2019mild} for the Gaussian case and in Alberverio et.\ al.\ \cite{AGMRS} for the case of ordinary L\'evy processes.  In the last part of our work, we derive such an It{\^o} formula for mild solutions of equation \eqref{introduction_equation}  driven by a standard symmetric $\alpha$-stable cylindrical process.

We outline the structure of the paper. Selected preliminaries on standard symmetric $\alpha$-stable cylindrical processes, integration with respect to them and underlying results on equations as well as the theory of predictable compensators are collected in Section 2. In Sections 3 and 4, we identify the predictable compensator and quadratic variation of the integral process. These observations lead us directly to the Itô formula for semi-martingales driven by a standard symmetric $\alpha$-stable cylindrical process in Section 5. In Section 6, we prove existence of a mild solution under Lipschitz and boundedness conditions in the space of continuous functions, where the main result is formulated in Theorem \ref{thm:existence_mild}. In Section 7, we establish conditions for exponential ultimate boundedness in Theorem \ref{thm:criterion}. Finally, in Section 8, an Itô formula for mild solutions is proved.

\section{Preliminaries}

\subsection{Standard symmetric $\alpha$-stable cylindrical L\'evy processes}

 Let $U$ and $H$ be separable Hilbert spaces with norm $\norm{\cdot}$ and scalar product $\langle\cdot,\cdot\rangle$. By $\overline{B}_H(r)$ we denote the closed ball in $H$ with radius $r>0$ and, in the special case when $r=1$, we write $\overline{B}_H:=\overline{B}_H(1)$. 
The space of Hilbert-Schmidt operators $\Phi\colon U\to H$ is denoted by $\mathcal{L}_2(U,H)$ and equipped with the norm 
$\norm{\cdot}_{\mathcal{L}_2(U,H)}$. 

Let $S$ be a subset of $U$. For each $n \in \mathbb{N}$, elements $u_1,...,u_n \in S$ and Borel set $A \in \mathcal{B}(\mathbb{R}^n)$, we define
\[C(u_1,...,u_n;A):=\{u \in U: (\langle u,u_1 \rangle,...,\langle u,u_n \rangle)\in A\}.\]
Such sets are called cylindrical sets with respect to $S$ and the collection of all such cylindrical sets is denoted by  $\mathcal{Z}(U,S)$. It is a $\sigma$-algebra if $S$ is finite and otherwise an algebra. We write shortly $\mathcal{Z}(U)$ for $\mathcal{Z}(U,U)$.\par
A set function $\mu: \mathcal{Z}(U)\rightarrow [0, \infty]$ is called a cylindrical measure on $\mathcal{Z}(U)$ if for each finite subset $S \subseteq U$, the restriction of $\mu$ to the $\sigma$-algebra $\mathcal{Z}(U,S)$ is a $\sigma$-additive measure. A cylindrical measure is said to be a cylindrical probability measure if $\mu(U)=1$.\par

Let $(\Omega,\Sigma,P)$ be a complete probability space. We will denote by $L_P^0(\Omega,U)$ the space of equivalence classes of measurable functions $Y\colon \Omega \rightarrow U$ equipped with the topology of convergence in probability.
A cylindrical random variable $X$ in $U$ is a linear and continuous mapping $X\colon U \rightarrow L_{P}^0(\Omega,\mathbb{R})$. It defines a cylindrical probability measure $\mu_X$ by 
\begin{align*}
    \mu_X\colon  \mathcal{Z}(U) \to [0,1],\qquad
    \mu_X(Z)=P\big( (Xu_1,\dots, Xu_n)\in A\big),
\end{align*}
for cylindrical sets $Z=C(u_1, ... , u_n; A)$. The cylindrical probability measure $\mu_X$ is called the {\em cylindrical distribution} of $X$.
We define the characteristic function of the cylindrical random variable $X$ by
\[\varphi_X\colon U \rightarrow \mathbb{C}, \qquad\varphi_X(u)=E\big[e^{iXu}\big].\]
Let $T\colon U\to H$ be a linear and continuous operator. By defining
\begin{align*}
    TX\colon H\to L_{P}^0(\Omega,\mathbb{R}), \qquad (TX)h=X(T^\ast h),
\end{align*}
we obtain a cylindrical random variable on $H$. In the special case when $T$ is a Hilbert-Schmidt operator and hence $0$-Radonifying by \cite[Th.\ VI.5.2]{VTC}, it follows from  \cite[Pr.\ VI.5.3]{VTC} that the cylindrical random variable $TX$ is induced by a genuine random variable $Y\colon \Omega\to H$, that is 
$(TX)h =\langle Y,h \rangle$  for all $h\in H$. 

A family $(L(t):t\geq 0)$ of cylindrical random variables $L(t)\colon U \rightarrow L_{P}^0(\Omega,\mathbb{R})$ is called a cylindrical $\left(\mathcal{F}_t\right)$-Lévy process if for each $n \in \mathbb{N}$ and $u_1,...,u_n \in U$, the stochastic process
$\big(\big(L(t)u_1,...,L(t)u_n\big): t \geq 0\big)$
is an $\left(\mathcal{F}_t\right)$-Lévy process in $\mathbb{R}^n$ and the filtration $\left(\mathcal{F}_t\right)_{t\geq 0}$ satisfies the usual conditions.
We denote by $\mathcal{Z}_*(U)$ the collection
\[\big\{\{u \in U: (\langle u,u_1 \rangle,...,\langle u,u_n \rangle)\in B\}:n \in \mathbb{N},u_1,...,u_n \in U,B\in \mathcal{B}(\mathbb{R}^n\setminus\{0\})\big\}\]
of cylindrical sets, which  forms an algebra of subsets of $U$. For fixed $u_1,...,u_n \in U$, let $\lambda_{u_1,\dots, u_n}$ be the L\'evy measure of $\big((L(t)u_1,...,L(t)u_n ): t \geq 0\big)$. Define a function 
$\lambda\colon  \mathcal{Z}_*(U) \rightarrow [0,\infty]$ by 
\begin{align*}
\lambda(C):=\lambda_{u_1,\dots, u_n}(B)\quad\text{for} \quad
C=\{u\in U:\, (\langle u,u_1 \rangle,...,\langle u,u_n \rangle)\in B\},
\end{align*}
for $B\in\mathcal{B}(\R^n)$. It is shown in \cite{AR} that $\lambda$ is well defined. The set function $\lambda$ is called the cylindrical L\'evy measure of $L$.

In this paper, we restrict our attention to standard symmetric $\alpha$-stable cylindrical Lévy processes for $\alpha\in (1,2)$, which we simply call $\alpha$-stable cylindrical L\'evy processes in the sequel. These are 
cylindrical Lévy processes with characteristic function $\phi_{L(t)}(u)=\exp(-t\norm{u}^\alpha)$ for each $t\ge 0$ and $u\in U$. Let $(e_k)_{k\in\N}$ be an orthonormal basis of $U$. 

The L\'evy measure $\lambda$ of an $\alpha$-stable cylindrical Lévy process for $\alpha\in (1,2)$ satisfies
\begin{align}
\left(\lambda\circ\Phi^{-1}\right)(\overline{B}_H^c)\leq c_\alpha\norm{\Phi}_{\mathcal{L}_2(U,H)}^\alpha, \quad \Phi\in\mathcal{L}_2(U,H)
\label{eq:estimate_ball_alpha}
\end{align}
for some $c_\alpha\in (0,\infty)$ depending only on $\alpha$ where $\overline{B}_H^c$ denotes the complement of the unit ball $\{h\in H:\, \norm{h}< 1\}$; see  \cite[Le.\ 1]{KR}. This enables us to conclude 
the following technical Lemma: 
\begin{Lemma}
Let $\lambda$ be the cylindrical L\'evy measure  of an $\alpha$-stable cylindrical Lévy process for $\alpha\in (1,2)$. 
For every $m\in\mathbb{N}$ there exists $d_\alpha^m<\infty$, depending only on $\alpha$ and $m$, such that
\begin{align}
\int_{\overline{B}_H(1/m)}{\abs{h}^2\,(\lambda\circ\Phi^{-1})(\d h)}+\int_{\overline{B}_H(m)^c}{\abs{h}\,(\lambda\circ\Phi^{-1})(\d h)}\leq d_\alpha^m\abs{\Phi}_{\mathcal{L}_2(U, H)}^\alpha
\label{estimate_alpha_norm}
\end{align}
for all $\Phi\in\mathcal{L}_2(U, H)$. Moreover, we have $\lim_{m \rightarrow \infty}d_\alpha^m=0$.
\end{Lemma}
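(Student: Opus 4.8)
The plan is to derive the full estimate \eqref{estimate_alpha_norm} from the single tail bound \eqref{eq:estimate_ball_alpha} by a scaling argument, and then to integrate it out using the layer-cake formula. Throughout, I would fix $\Phi\in\mathcal{L}_2(U,H)$, write $\nu:=\lambda\circ\Phi^{-1}$ for the induced Borel measure on $H$, and set $r(t):=\nu(\{h\in H:\norm{h}\ge t\})$, so that $r(1)=\nu(\overline{B}_H^c)$. The first step is the scaled tail estimate $r(t)\le c_\alpha t^{-\alpha}\norm{\Phi}_{\mathcal{L}_2(U,H)}^\alpha$ for every $t>0$: since $t^{-1}\Phi\in\mathcal{L}_2(U,H)$ and $\{u\in U:\norm{\Phi u}\ge t\}=\{u\in U:\norm{(t^{-1}\Phi)u}\ge 1\}$, one has $r(t)=(\lambda\circ(t^{-1}\Phi)^{-1})(\overline{B}_H^c)$, so \eqref{eq:estimate_ball_alpha} applied to $t^{-1}\Phi$ gives exactly $c_\alpha\norm{t^{-1}\Phi}_{\mathcal{L}_2(U,H)}^\alpha=c_\alpha t^{-\alpha}\norm{\Phi}_{\mathcal{L}_2(U,H)}^\alpha$.

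The second step would treat the two integrals separately by Fubini. For the small-jump term, the layer-cake formula (after the substitution $s=t^2$) yields $\int_{\overline{B}_H(1/m)}\norm{h}^2\,(\lambda\circ\Phi^{-1})(\d h)=2\int_0^\infty t\,\nu(\{h:t<\norm{h}\le 1/m\})\,\d t\le 2\int_0^{1/m}t\,r(t)\,\d t$, and inserting the scaled tail bound leaves $2c_\alpha\norm{\Phi}_{\mathcal{L}_2(U,H)}^\alpha\int_0^{1/m}t^{1-\alpha}\,\d t=\tfrac{2c_\alpha}{2-\alpha}\,m^{\alpha-2}\,\norm{\Phi}_{\mathcal{L}_2(U,H)}^\alpha$, the integral converging precisely because $\alpha<2$. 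For the large-jump term, the layer-cake formula yields $\int_{\overline{B}_H(m)^c}\norm{h}\,(\lambda\circ\Phi^{-1})(\d h)=m\,\nu(\{h:\norm{h}>m\})+\int_m^\infty \nu(\{h:\norm{h}>t\})\,\d t\le m\,r(m)+\int_m^\infty r(t)\,\d t$, and inserting the scaled tail bound leaves $c_\alpha\norm{\Phi}_{\mathcal{L}_2(U,H)}^\alpha\bigl(m^{1-\alpha}+\int_m^\infty t^{-\alpha}\,\d t\bigr)=\tfrac{\alpha c_\alpha}{\alpha-1}\,m^{1-\alpha}\,\norm{\Phi}_{\mathcal{L}_2(U,H)}^\alpha$, the integral converging precisely because $\alpha>1$.

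Finally I would set $d_\alpha^m:=\tfrac{2c_\alpha}{2-\alpha}\,m^{\alpha-2}+\tfrac{\alpha c_\alpha}{\alpha-1}\,m^{1-\alpha}$, which depends only on $\alpha$ and $m$; adding the two bounds from the second step gives \eqref{estimate_alpha_norm}, and since $1<\alpha<2$ both exponents $\alpha-2$ and $1-\alpha$ are strictly negative, so $d_\alpha^m\to 0$ as $m\to\infty$. I do not expect a genuine obstacle here: the only points needing a line of care are the scaling identity at the level of the cylindrical L\'evy measure (unproblematic, since $\lambda\circ\Phi^{-1}$ is a bona fide $\sigma$-finite Borel measure on $H$ for Hilbert-Schmidt $\Phi$) and the integrability of the two power functions on the relevant half-lines, which is exactly where the restriction $\alpha\in(1,2)$ enters, $\alpha<2$ controlling the small jumps and $\alpha>1$ the large ones.
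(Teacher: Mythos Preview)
Your proposal is correct and, at its core, uses the same idea as the paper: bound the two integrals by integrating the tail function $t\mapsto (\lambda\circ\Phi^{-1})(\{h:\norm{h}>t\})$ and combine this with the scaling of $\alpha$-stable measures. The difference is mainly one of packaging. The paper first invokes the \emph{exact} scaling identity $(\lambda\circ\Phi^{-1})(\overline{B}_H^c(r))=r^{-\alpha}(\lambda\circ\Phi^{-1})(\overline{B}_H^c)$ for genuine $\alpha$-stable measures, then computes the integrals via discrete approximations $f_{m,n}$ and monotone convergence, obtaining the slightly sharper constant $\frac{\alpha}{2-\alpha}m^{\alpha-2}$ for the small-jump term before applying \eqref{eq:estimate_ball_alpha} at the end. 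You instead derive the tail \emph{inequality} $r(t)\le c_\alpha t^{-\alpha}\norm{\Phi}_{\mathcal{L}_2(U,H)}^\alpha$ directly by applying \eqref{eq:estimate_ball_alpha} to $t^{-1}\Phi$, and then use the layer-cake formula in place of the discrete approximation, yielding $\frac{2}{2-\alpha}m^{\alpha-2}$.

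Both routes are equally valid for the lemma as stated; your argument is a bit shorter and avoids the monotone-convergence step, while the paper's version recovers the integrals exactly (before bounding) and makes the $\alpha$-stable structure more explicit. One cosmetic point: your definition $r(t)=\nu(\{h:\norm{h}\ge t\})$ with a weak inequality does not literally match $\nu(\overline{B}_H^c)=\nu(\{h:\norm{h}>1\})$, so the identity ``$r(1)=\nu(\overline{B}_H^c)$'' is off by a sphere; this is harmless for the upper bounds you need (and spheres have $\lambda\circ\Phi^{-1}$-measure zero anyway), but you should phrase the scaling step with strict inequalities to keep it clean.
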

\begin{proof}
Let $m \in \mathbb{N}$ be fixed. We approximate the integrand of the first integral in \eqref{estimate_alpha_norm} by 
\[ f_{m,n}\colon \overline{B}_H(\tfrac{1}{m}) \rightarrow \mathbb{R}, 
\qquad 
f_{m,n}(h):=\sum_{i=0}^{m2^n-1}\left(\frac{i}{m2^n}\right)^2 \mathbbm{1}_{(\frac{i}{m2^n},\frac{i+1}{m2^n}]}(\norm{h}).\]
Since $\lambda\circ\Phi^{-1}$ is a genuine $\alpha$-stable measure in $H$,  we have
for each $r>0$ that 
\begin{align}
    (\lambda \circ \Phi^{-1})(\overline{B}_H^c(r))=r^{-\alpha}(\lambda \circ \Phi^{-1})(\overline{B}_H^c)
    \label{eq:stable_ball_radius};
\end{align}
see \cite[Th.\ 6.2.7]{LI}. This enables us to conclude  for each $n\in\mathbb{N}$ that
\begin{align*}
    \int_{\overline{B}_H(1/m)}f_{m,n}(h)&\,(\lambda \circ \Phi^{-1})(\d h)\\
    =& \frac{1}{m^2}(\lambda \circ \Phi^{-1})(\overline{B}_H^c)\sum_{i=0}^{m2^n-1}\left(\frac{i}{2^n}\right)^2 \left(\left(\frac{i}{m2^n}\right)^{-\alpha}-\left(\frac{i+1}{m2^n}\right)^{-\alpha}\right)\\
    =& m^{\alpha-2}(\lambda \circ \Phi^{-1})(\overline{B}_H^c) \int_0^1\left(\sum_{i=0}^{m2^n-1}\left(\frac{i}{2^n}\right)^2 \mathbbm{1}_{\left(\frac{i}{2^n},\frac{i+1}{2^n}\right]}(r)\right)\alpha r^{-(\alpha+1)}\, {\rm d}r. 
\end{align*}
The monotone convergence theorem implies
\begin{align*}
    \lim_{n \rightarrow \infty}\int_{\overline{B}_H(1/m)}f_{m,n}(h)\,(\lambda \circ \Phi^{-1})(\d h)
    &=m^{\alpha-2}\frac{\alpha}{2-\alpha}(\lambda \circ \Phi^{-1})(\overline{B}_H^c). 
\end{align*}
Since another application of the monotone convergence theorem shows
\begin{equation*}
    \lim_{n \rightarrow \infty}\int_{\overline{B}_H(1/m)}f_{m,n}(h)\, (\lambda \circ \Phi^{-1})(\d h)=\int_{\overline{B}_H(1/m)}\norm{h}^2\, (\lambda \circ \Phi^{-1})(\d h),
\end{equation*}
we obtain from \eqref{eq:estimate_ball_alpha} that
\[\int_{\overline{B}_H(1/m)}\norm{h}^2\, (\lambda \circ \Phi^{-1})(\d h)=m^{\alpha-2}\frac{\alpha}{2-\alpha}(\lambda \circ \Phi^{-1})(\overline{B}_H^c)\leq m^{\alpha-2}\frac{\alpha}{2-\alpha} c_\alpha \norm{\Phi}_{\mathcal{L}_2(U, H)}^\alpha.\]

Applying similar arguments for the integrand in the second integral in  \eqref{estimate_alpha_norm} yields
\[\int_{\overline{B}_H(m)^c}{\abs{h}}\, (\nu\circ\Phi^{-1})(\d h)\leq m^{1-\alpha} \frac{\alpha}{\alpha-1}c_\alpha \abs{\Phi}_{\mathcal{L}_2(U, H)}^\alpha,\]
for all $\Phi \in \mathcal{L}_2(U, H)$, which completes the proof as in \eqref{estimate_alpha_norm} we can set
\begin{align}
    d_\alpha^m:=(m^{\alpha-2}+m^{1-\alpha})c_\alpha\to 0, \quad m\to\infty.
\end{align}
\end{proof}
If $L$ is an $\alpha$-stable cylindrical Lévy process and $T\colon H\to U$ a Hilbert-Schmidt operator, then
the cylindrical random variable $TL(1)$ is induced by a genuine stable random variable on $U$ with L\'evy measure $\lambda\circ T^{-1}$. This L\'evy measure depends continuously on $T$ in the following way:
\begin{Lemma}\label{le.conv_of_radonif_rv}
Let $\lambda$ be the cylindrical L\'evy measure of an $\alpha$-stable cylindrical Lévy process for $\alpha\in (1,2)$. Then for each $r>0$, the mapping
$\Phi\mapsto\lambda\circ\Phi^{-1}\arrowvert_{\overline{B}_H(r)^c}$ is continuous from $\mathcal{L}_2(U,H)$ to the space of Borel measures on $\overline{B}_H(r)^c$ equipped with the weak topology.
\end{Lemma}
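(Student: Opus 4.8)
The plan is to reduce the assertion to a convergent sequence $\Phi_n\to\Phi$ in $\mathcal{L}_2(U,H)$ (which suffices, $\mathcal{L}_2(U,H)$ being metric), to write $\nu_n:=\lambda\circ\Phi_n^{-1}$ and $\nu:=\lambda\circ\Phi^{-1}$ for the associated genuine symmetric $\alpha$-stable L\'evy measures on $H$ --- finite on $\overline{B}_H(r)^c$ by \eqref{eq:estimate_ball_alpha} --- and to establish $\nu_n|_{\overline{B}_H(r)^c}\to\nu|_{\overline{B}_H(r)^c}$ weakly in two moves: first that the genuine random variables $Y_n$ and $Y$ inducing $\Phi_nL(1)$ and $\Phi L(1)$ satisfy $Y_n\Rightarrow Y$ in $H$, and then transferring this to the L\'evy measures. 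The first move starts from pointwise convergence of the characteristic functions, which is immediate: $\Phi_n\to\Phi$ in $\mathcal{L}_2(U,H)$ forces $\Phi_n^\ast\to\Phi^\ast$ in $\mathcal{L}_2(H,U)$ and, a fortiori, in operator norm, so $\norm{\Phi_n^\ast h}\to\norm{\Phi^\ast h}$ for every $h\in H$ and therefore $\varphi_{Y_n}(h)=\exp(-\norm{\Phi_n^\ast h}^\alpha)\to\exp(-\norm{\Phi^\ast h}^\alpha)=\varphi_Y(h)$.

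The substantial part is the uniform tightness of $(Y_n)_{n\in\N}$. Fix an orthonormal basis $(f_j)_{j\in\N}$ of $H$, let $P_N$ be the orthogonal projection onto $\operatorname{span}(f_1,\dots,f_N)$, and put $Q_N:=\operatorname{Id}-P_N$. Since the set $\{\Phi_n:n\in\N\}\cup\{\Phi\}$ is compact in $\mathcal{L}_2(U,H)$ and, for each fixed $\Psi$, the map $N\mapsto\norm{Q_N\Psi}_{\mathcal{L}_2(U,H)}^2=\sum_{j>N}\norm{\Psi^\ast f_j}^2$ is non-increasing and tends to $0$, Dini's theorem gives $\sup_{n\in\N}\norm{Q_N\Phi_n}_{\mathcal{L}_2(U,H)}\to 0$ as $N\to\infty$. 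Inserting the Hilbert--Schmidt operators $Q_N\Phi_n$ into \eqref{eq:estimate_ball_alpha} together with the scaling identity \eqref{eq:stable_ball_radius} then gives, for every $\delta>0$,
\[
\sup_{n\in\N}\nu_n\big(\{x\in H:\norm{Q_Nx}>\delta\}\big)
=\sup_{n\in\N}\delta^{-\alpha}\big(\lambda\circ(Q_N\Phi_n)^{-1}\big)(\overline{B}_H^c)
\le\delta^{-\alpha}c_\alpha\sup_{n\in\N}\norm{Q_N\Phi_n}_{\mathcal{L}_2(U,H)}^\alpha\xrightarrow[N\to\infty]{}0,
\]
and likewise $\sup_{n}\nu_n(\overline{B}_H^c(R))\le R^{-\alpha}c_\alpha\sup_n\norm{\Phi_n}_{\mathcal{L}_2(U,H)}^\alpha\to 0$ as $R\to\infty$. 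Combining these tail bounds with the moment estimates of the preceding Lemma (applied with $m=1$ to $\nu_n$ and to $\lambda\circ(Q_N\Phi_n)^{-1}$) and with the L\'evy--It\^o representation of the symmetric $\alpha$-stable variables $Y_n$ and $Q_NY_n$ (no Gaussian part, integrable large jumps since $\alpha>1$), one obtains $\sup_n\E\norm{Y_n}<\infty$ and $\sup_n\E\norm{Q_NY_n}\to 0$ as $N\to\infty$. Markov's inequality then produces, for each $\varepsilon>0$, a radius $R$ and indices $N_1<N_2<\cdots$ with $\sup_n\P(\norm{Y_n}>R)<\varepsilon/2$ and $\sup_n\P(\norm{Q_{N_j}Y_n}>1/j)<\varepsilon2^{-j-1}$, so that $K:=\{x:\norm{x}\le R\}\cap\bigcap_{j}\{x:\norm{Q_{N_j}x}\le 1/j\}$ is compact in $H$ with $\sup_n\P(Y_n\notin K)<\varepsilon$. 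Together with the pointwise convergence of the characteristic functions, Prokhorov's theorem and uniqueness of characteristic functions on a separable Hilbert space give $Y_n\Rightarrow Y$.

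For the second move, I observe that the tail bounds above apply verbatim to the family $(\nu_n|_{\overline{B}_H(r)^c})_{n\in\N}$, which is hence uniformly tight and bounded and therefore, by Prokhorov's theorem, relatively compact in the weak topology; it thus remains only to show that every weak limit point equals $\nu|_{\overline{B}_H(r)^c}$, and since a finite Borel measure on a separable Hilbert space is determined by its Fourier transform, it is enough that $\int_{\overline{B}_H(r)^c}e^{i\langle h,x\rangle}\,\nu_n(\d x)\to\int_{\overline{B}_H(r)^c}e^{i\langle h,x\rangle}\,\nu(\d x)$ for every $h\in H$. This follows from the classical theory of weak convergence of infinitely divisible distributions: $Y_n\Rightarrow Y$ forces convergence of the L\'evy measures tested against bounded continuous functions vanishing in a neighbourhood of the origin, and the remaining contribution near the sphere $\{x:\norm{x}=r\}$ is uniformly negligible as it shrinks, because $\nu_n(\{r<\norm{x}<r+\eta\})=(r^{-\alpha}-(r+\eta)^{-\alpha})\,\nu_n(\overline{B}_H^c)$ by \eqref{eq:stable_ball_radius} while $\sup_n\nu_n(\overline{B}_H^c)<\infty$, and that sphere is moreover a $\nu$-null set (again by \eqref{eq:stable_ball_radius}, since $\nu(\{x:\norm{x}\ge r\})=\lim_{s\uparrow r}s^{-\alpha}\nu(\overline{B}_H^c)=r^{-\alpha}\nu(\overline{B}_H^c)=\nu(\overline{B}_H^c(r))$). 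Alternatively, and more in the hands-on spirit of the present paper, one may identify the limit through the spectral measures of the $\nu_n$ on the unit sphere of $H$, which are uniformly tight there by the same Hilbert--Schmidt-tail estimate, using that a symmetric $\alpha$-stable spectral measure $\Gamma$ is determined by the quantities $\int|\langle h,\xi\rangle|^\alpha\,\Gamma(\d\xi)$, $h\in H$ --- a fact that reduces to its classical finite-dimensional counterpart (valid since $\alpha\notin 2\N$).

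The step I expect to be the real obstacle is the uniform tightness in the second move (equivalently, of the family $(\nu_n|_{\overline{B}_H(r)^c})$): unlike in $\R^d$, the unit sphere of $H$ is not compact, so one cannot reduce matters to a single finite-dimensional projection, and it is essential that $(\Phi_n)$ is not merely bounded but relatively compact in $\mathcal{L}_2(U,H)$, which makes the Hilbert--Schmidt tails $\norm{Q_N\Phi_n}_{\mathcal{L}_2(U,H)}$ small uniformly in $n$; the stable-scaling estimate \eqref{eq:estimate_ball_alpha} then converts this into uniform control of the tails of $\nu_n$ and of $Y_n$.
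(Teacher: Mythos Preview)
Your proof is correct and follows the same two-step strategy as the paper: first establish weak convergence of the Radonified laws $\Phi_nL(1)\Rightarrow\Phi L(1)$, then pass to the L\'evy measures restricted to $\overline{B}_H(r)^c$. The paper, however, dispatches both steps by citation: the first move is exactly \cite[Le.\ 2.1]{BR} (weak continuity of $\Phi\mapsto\mu\circ\Phi^{-1}$ for any cylindrical probability measure $\mu$ and Hilbert--Schmidt $\Phi$), and the second is the infinite-dimensional convergence-of-types result \cite[Th.\ 5.5]{PA} for infinitely divisible laws. What you have written is, in effect, a self-contained re-derivation of both cited results specialised to the $\alpha$-stable setting. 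The gain of your approach is transparency: the Dini argument on the compact set $\{\Phi_n\}\cup\{\Phi\}$ together with the estimate \eqref{eq:estimate_ball_alpha} makes explicit that relative compactness (not mere boundedness) in $\mathcal{L}_2(U,H)$ is what drives tightness in $H$, and the scaling identity \eqref{eq:stable_ball_radius} lets you control the annulus $\{r<\norm{x}<r+\eta\}$ uniformly in $n$, which is precisely the boundary issue one must handle when cutting at a fixed radius. The paper's route buys brevity and places the result in its natural general context; yours buys independence from \cite{BR} and \cite{PA} and exposes the mechanism.
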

\begin{proof}
If $\mu$ is a cylindrical probability measure and $(F_n)_{n \in \mathbb{N}}$ is a sequence converging to $F$ in $\mathcal{L}_2(U,H)$ then $(\mu\circ F_n^{-1})_{n \in \mathbb{N}}$ converges weakly to $\mu\circ F^{-1}$ according to \cite[Le.\ 2.1]{BR}.
From this, the assertion follows from \cite[Th.\ 5.5]{PA}.
\end{proof}

\subsection{Stochastic integration}

We briefly recall some facts on stochastic integration with respect to an $\alpha$-stable
cylindrical L\'evy process $L$ as introduced in \cite{BR}. A process $G\colon \Omega\times [0,T]\rightarrow\mathcal{L}_2(U,H)$ is called adapted and simple if it is of the form
\begin{align}
    G=\Phi_0\mathbbm{1}_{\lbrace 0\rbrace}+\sum_{i=1}^N\Phi_i\mathbbm{1}_{(t_{i-1}, t_i]},
\label{def:simple_process}
\end{align}
where $N\in\mathbb{N}$, $0=t_0<t_1<\dots<t_N=T$ and $\Phi_i$ is an $\mathcal{F}_{t_{i-1}}$-measurable and $\mathcal{L}_2(U,H)$-valued random variable taking finitely many values. We denote by $\mathcal{S}_{\rm adp}^{\rm HS}$ the class of all adapted, simple processes.
The integral process $\int_0^\cdot G\, \d L$ is defined as the sum of the Radonified increments
\begin{align}
    \int_0^tG{\rm d}L:=\sum_{i=1}^N\Phi_i\big(L({t_{i}\land t})-L({t_{i-1}\land t})\big), \quad t\in [0,T].
    \label{def:simple_integral}
\end{align}
Here, $\Phi_i\big(L({t_{i}\land t})-L({t_{i-1}\land t})\big)$ is defined as the $H$-valued random variable satisfying 
\begin{align*}
    \left\langle \Phi_i\big(L({t_{i}\land t})-L({t_{i-1}\land t})\big), h\right\rangle
    =\big(L({t_{i}\land t})-L({t_{i-1}\land t})\big)(\Phi_i^\ast h)
    \qquad\text{for all }h\in H.
\end{align*}

Let $\mathcal{S}_{\rm adp}^{1, \rm op}$ denote the class of adapted, simple $\mathcal{L}(H)$-valued processes bounded in the operator norm by $1$ on $[0,T]$.
An arbitrary predictable process $G\colon\Omega\times [0,T]\rightarrow\mathcal{L}_2(U,H)$ is stochastically integrable 
if there exists a sequence of adapted simple processes $(G_n)_{n\in \mathbb{N}}\subset\mathcal{S}_{\rm adp}^{\rm HS}$ such that:
\begin{itemize}
    \item[(i)] $(G_n)_{n \in \mathbb{N}}$ converges to $G$ $P\otimes{\rm Leb}\vert_{[0,T]}$-almost everywhere,
    \item[(ii)]$\displaystyle 
            \lim_{m,n \rightarrow \infty}\sup_{\Gamma \in \mathcal{S}_{{\rm adp}}^{1, {\rm op}}}E\Bigg[\norm{\int_0^T \Gamma(G_m-G_n) \;{\rm d}L}\wedge1 \Bigg]=0 $
\end{itemize}
In this case, $ \int_0^{\cdot}G \,{\rm d}L$ is defined as the limit of $\int_0^{\cdot}G_n\, {\rm d}L$ 
in the topology of uniform convergence in probability on $[0,T]$. 

It is shown in \cite{BR}, that a predictable process $G$ is stochastically integrable if and only it is an element of  $L_P^0\big(\Omega,L_{{\rm Leb}}^{\alpha}\big([0,T],\mathcal{L}_2(G,H)\big)\big)$.
%
It follows from  \cite[Co.\ 3]{KR} that for every $0<p<\alpha$ we have
\begin{align}\label{eq:moment_estimate_integral}
    \E\left[\sup_{t\in [0,T]}\abs{\int_0^tG\, {\rm d}L}^p\right]\leq e_{p,\alpha}\left( \E\left[\int_0^{T}\abs{G(t)}_{\mathcal{L}_2(U,H)}^\alpha \,  {\rm d }t\right]\right)^{p/\alpha},
\end{align}
for every stochastically integrable predictable process $G$, where $e_{p,\alpha}=\frac{\alpha}{\alpha-p}e_{2,\alpha}^{p/\alpha}$ for some $e_{2,\alpha}\in (0,\infty)$ that depends only on $\alpha$.
\begin{Lemma}\label{le.local_martingle_property}
If $G$ is a predictable stochastic process stochastically integrable with respect to the $\alpha$-stable cylindrical L\'evy process $L$ for some $\alpha \in (1,2)$ then $\int_0^\cdot G\, {\rm d}L $ is a local martingale.
\end{Lemma}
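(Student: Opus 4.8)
The plan is to prove the local-martingale property in three steps of increasing generality: first for adapted simple processes, then for predictable processes $G$ satisfying the a priori bound $\E\big[\int_0^T\abs{G(t)}_{\mathcal{L}_2(U,H)}^\alpha\,\d t\big]<\infty$, and finally for an arbitrary stochastically integrable $G$ by a localisation argument. The restriction $\alpha>1$ is used decisively at the very start: a real-valued symmetric $\alpha$-stable random variable has finite absolute moment of every order $p<\alpha$, in particular a finite first moment, and vanishing mean by symmetry; hence for each $u\in U$ the real-valued L\'evy process $\pa{L(t)u:t\ge 0}$ is an $\pa{\mathcal{F}_t}$-martingale.

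For an adapted simple process $G=\Phi_0\mathbbm{1}_{\cu{0}}+\sum_{i=1}^N\Phi_i\mathbbm{1}_{(t_{i-1},t_i]}$, the integral \eqref{def:simple_integral} is a finite sum of Radonified increments $\Phi_i\pa{L(t_i\wedge t)-L(t_{i-1}\wedge t)}$. Since each $\Phi_i$ is $\mathcal{F}_{t_{i-1}}$-measurable and takes finitely many values in $\mathcal{L}_2(U,H)$, and since the Radonified increment $\phi\pa{L(b)-L(a)}$ of a fixed Hilbert--Schmidt operator $\phi$ is a genuine symmetric $\alpha$-stable $H$-valued random variable, in particular integrable because $\alpha>1$, the integral process lies in $L^1(\Omega,H)$ at every time. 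The martingale identity $\E\big[\int_0^tG\,\d L\mid\mathcal{F}_s\big]=\int_0^sG\,\d L$ for $s\le t$ is then checked by testing against $h\in H$ and decomposing over the finitely many values $\cu{\Phi_i=\phi}\in\mathcal{F}_{t_{i-1}}$: on such a set one is reduced to increments of the real martingale $\pa{L(r)(\phi^\ast h):r\ge 0}$ over intervals with endpoints among $\cu{t_{i-1},t_i}$, and independence of increments together with the $\mathcal{F}_{t_{i-1}}$-measurability of $\cu{\Phi_i=\phi}$ gives the claim after distinguishing the cases $s\ge t_i$, $t_{i-1}\le s<t_i$ and $s<t_{i-1}$.

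Next, let $G$ be predictable with $\E\big[\int_0^T\abs{G(t)}_{\mathcal{L}_2(U,H)}^\alpha\,\d t\big]<\infty$, and choose adapted simple processes $G_n$ converging to $G$ both in $L^\alpha\pa{\Omega\times[0,T];\mathcal{L}_2(U,H)}$ and $\P\otimes{\rm Leb}$-a.e.\ (possible by the standard density of simple processes); by \eqref{eq:moment_estimate_integral} they form an approximating sequence for $\int_0^\cdot G\,\d L$ in the sense of \cite{BR}. Applying \eqref{eq:moment_estimate_integral} with the admissible exponent $p=1<\alpha$ yields
\begin{align*}
&\E\Big[\sup_{t\in[0,T]}\abs{\int_0^t(G_n-G)\,\d L}\Big]\\
&\qquad\le e_{1,\alpha}\pa{\E\Big[\int_0^T\abs{G_n(t)-G(t)}_{\mathcal{L}_2(U,H)}^\alpha\,\d t\Big]}^{1/\alpha}\longrightarrow 0
\end{align*}
as $n\to\infty$, so $\int_0^\cdot G_n\,\d L\to\int_0^\cdot G\,\d L$ in $L^1$, uniformly on $[0,T]$. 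Since each $\int_0^\cdot G_n\,\d L$ is a martingale by the first step and an $L^1$-limit of martingales is again a martingale, $\int_0^\cdot G\,\d L$ is a genuine $H$-valued martingale.

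Finally, for an arbitrary stochastically integrable predictable process $G$ one has $\int_0^T\abs{G(t)}_{\mathcal{L}_2(U,H)}^\alpha\,\d t<\infty$ $\P$-a.s. Put $\sigma_n:=\inf\cu{t\in[0,T]:\int_0^t\abs{G(s)}_{\mathcal{L}_2(U,H)}^\alpha\,\d s\ge n}$ with $\inf\emptyset:=T$; since $t\mapsto\int_0^t\abs{G(s)}_{\mathcal{L}_2(U,H)}^\alpha\,\d s$ is continuous and adapted, $\sigma_n$ is a stopping time with $\sigma_n\uparrow T$ and $\P(\sigma_n=T)\to 1$. The process $G\,\mathbbm{1}_{[0,\sigma_n]}$, i.e.\ $(\omega,t)\mapsto G(\omega,t)\mathbbm{1}_{\cu{t\le\sigma_n(\omega)}}$, is predictable (the stochastic interval $[0,\sigma_n]$ being left-continuous and adapted) and satisfies $\E\big[\int_0^T\abs{G(t)}_{\mathcal{L}_2(U,H)}^\alpha\mathbbm{1}_{\cu{t\le\sigma_n}}\,\d t\big]=\E\big[\int_0^{\sigma_n}\abs{G(t)}_{\mathcal{L}_2(U,H)}^\alpha\,\d t\big]\le n$, so $\int_0^\cdot G\,\mathbbm{1}_{[0,\sigma_n]}\,\d L$ is a martingale by the previous step. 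Identifying it with the stopped integral $\pa{\int_0^\cdot G\,\d L}^{\sigma_n}$ --- working with the standard c\`adl\`ag modification of $\int_0^\cdot G\,\d L$ --- then exhibits $\pa{\sigma_n}_{n\in\N}$ as a localising sequence and completes the proof. I expect this identification, namely that stopping the integral process at $\sigma_n$ coincides with integrating the integrand stopped at $\sigma_n$, to be the main technical point. It is immediate from \eqref{def:simple_integral} when the integrand is simple and the stopping time takes finitely many values, and the general case follows by approximating $\sigma_n$ from above by finitely-valued stopping times and $G$ by simple processes and passing to the limit via \eqref{eq:moment_estimate_integral} and right-continuity of the paths; coordinating these two approximations is where some care is required.
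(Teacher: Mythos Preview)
Your proposal is correct and follows essentially the same route as the paper: localise via $\sigma_n=\inf\{t:\int_0^t\|G(s)\|_{\mathcal{L}_2(U,H)}^\alpha\,\d s\ge n\}$, approximate $G\mathbbm{1}_{[0,\sigma_n]}$ by adapted simple processes in $L^\alpha(\Omega\times[0,T])$, use the moment estimate \eqref{eq:moment_estimate_integral} with $p=1$ to pass to the $L^1$-limit, and invoke the stopped-integral identity $(\int_0^\cdot G\,\d L)^{\sigma_n}=\int_0^\cdot G\mathbbm{1}_{[0,\sigma_n]}\,\d L$. The only cosmetic difference is that you spell out the martingale property for simple integrands by hand (testing against $h\in H$ and decomposing over the finitely many values of each $\Phi_i$), whereas the paper simply cites \cite[Th.~I.51]{PR} after first checking integrability of the running supremum via \eqref{eq:moment_estimate_integral}; likewise for the stopped-integral identity the paper just points to \cite[Th.~I.12]{PR}.
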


\begin{proof}
Define the predictable stopping times
$   \tau_n=\inf\left\{ t>0:\int_0^t\abs{G(s)}_{\mathcal{L}_2(U,H)}^\alpha {\rm d}s>n \right\}$ for $n\in\mathbb{N}$.
It follows from Proposition 4.22(ii) and Lemma 1.3 in \cite{DZ} that for each $n\in\N$ there exists a sequence of adapted, simple processes $( G_{n,k})_{k\in \mathbb{N}}$ such that
\begin{align}
 \lim_{k\to\infty} \E\left[\int_0^T\abs{G(s)\mathbbm{1}_{[0,\tau_n]}(s)-G_{n,k}(s)}_{\mathcal{L}_2(U,H)}^\alpha \,\d s\right]= 0.
\label{proof:local_martingale}
\end{align}
Since inequality \eqref{eq:moment_estimate_integral} guarantees for each $k, n\in\N$ that 
\begin{align*}
    \E\left[\sup_{0\leq t\leq T}\abs{\int_0^tG_{n,k}\, {\rm d}L}\right]\leq e_{1,\alpha}\left(\E\left[\int_0^T\abs{G_{n,k}(s)}_{\mathcal{L}_2(U,H)}^\alpha {\rm d}s\right]\right)^{1/\alpha\, }<\infty,
\end{align*}
the same arguments as in \cite[Th.\ I:51]{PR} show that the processes $\int_0^\cdot G_{n,k}\, {\rm d}L$
are martingales. Equation \eqref{proof:local_martingale} shows that $\int_0^\cdot G\mathbbm{1}_{[0,\tau_n]}\, {\rm d}L$
is a limit of martingales in $L^1(\Omega,H)$ by \eqref{eq:moment_estimate_integral}, and thus a martingale. Since standard arguments, e.g.\  \cite[Th.\ I.12]{PR}, establish
\begin{align}
    \left(\int_0^\cdot G\, {\rm d}L\right)^{\tau_n}=\int_0^\cdot G\mathbbm{1}_{[0,\tau_n]} \, {\rm d}L \quad \text{a.s.},
\label{eq:stopped_integral}
\end{align}
for the stopped integral process, the proof is completed.
\end{proof}

\begin{Thm}[Stochastic Fubini Theorem]\label{thm.fubini}
    Let $L$ be the standard symmetric $\alpha$-stable cylindrical Lévy process for  $\alpha\in (1,2)$. If $G\colon \Omega\times [0,T]^2\rightarrow\mathcal{L}_2(U,H)$ is measurable, $G(t,\cdot)$ is predictable for every $t\in [0,T], $ and
    $\int_0^T\int_0^T\norm{G(t,s)}_{\mathcal{L}_2(U,H)}^\alpha\, {\rm d}t \, {\rm d}s<\infty$ a.s.\ then it follows: 
    \begin{itemize}
        \item[{\rm (a)}] $G(t,\cdot)$ is stochastically integrable for every $t\in [0,T]$ and $\int_0^T G(\cdot,s){\rm d}L(s)$ is a.s.\ Bochner integrable;
        \item[{\rm (b)}] $G(\cdot,s)$ is a.s.\ Bochner integrable for every $s\in [0,T]$ and $\int_0^T G(t,\cdot)\,{\rm d}t$ is stochastically integrable;
        \item[{\rm (c)}] $\displaystyle \int_0^T\left(\int_0^TG(t,s)\,{\rm d}t\right)\,{\rm d}L(s)=\int_0^T\left(\int_0^TG(t,s)\,{\rm d}L(s)\right)\, {\rm d}t$ a.s.
    \end{itemize}
\end{Thm}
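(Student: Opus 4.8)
The plan is to run the classical three–step scheme for a stochastic Fubini theorem — verify the identity for a dense class of elementary integrands, extend to bounded integrands by approximation, and finally remove boundedness by truncating the integrand — but with every limit passage justified through the moment bound \eqref{eq:moment_estimate_integral} and the dominated convergence theorem for stochastic integrals from \cite{BR}, since no $L^2$-isometry is available here.

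\textbf{Step 1 (reduction to bounded integrands).} For $N\in\N$ put $G_N(t,s):=G(t,s)\mathbbm{1}_{\{\norm{G(t,s)}_{\mathcal{L}_2(U,H)}\le N\}}$. As $G(t,\cdot)$ is predictable and $\{\norm{G(t,\cdot)}_{\mathcal{L}_2(U,H)}\le N\}$ is a predictable set, $G_N(t,\cdot)$ is predictable for each $t$, $G_N$ is jointly measurable, and $\norm{G_N}_{\mathcal{L}_2(U,H)}\le N$ pointwise, so $\int_0^T\int_0^T\norm{G_N(t,s)}_{\mathcal{L}_2(U,H)}^\alpha\,\d t\,\d s\le N^\alpha T^2$ deterministically. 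Assuming (a)–(c) proved for each $G_N$, we let $N\to\infty$: since $G_N\to G$ pointwise with $\norm{G_N}_{\mathcal{L}_2(U,H)}\le\norm{G}_{\mathcal{L}_2(U,H)}$, dominated convergence gives $\int_0^T\int_0^T\norm{G_N-G}_{\mathcal{L}_2(U,H)}^\alpha\,\d t\,\d s\to 0$ a.s., the characterisation of stochastic integrability recalled before \eqref{eq:moment_estimate_integral} together with the dominated convergence theorem of \cite{BR} transfers the stochastic–integral statements and both sides of (c), while the Bochner–integral terms are handled by the classical dominated convergence theorem, using $\norm{\int_0^T G_N(t,s)\,\d t}_{\mathcal{L}_2(U,H)}\le\int_0^T\norm{G(t,s)}_{\mathcal{L}_2(U,H)}\,\d t$ and the bound \eqref{eq:moment_estimate_integral} applied with some fixed $p\in(1,\alpha)$ to control $t\mapsto\int_0^T G_N(t,s)\,\d L(s)$. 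From now on we may assume $\norm{G(t,s)}_{\mathcal{L}_2(U,H)}\le K$ for a deterministic $K$, and we fix $p\in(1,\alpha)$.

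\textbf{Step 2 (integrability statements and the elementary case).} For bounded $G$, (a) is immediate: $\int_0^T\norm{G(t,s)}_{\mathcal{L}_2(U,H)}^\alpha\,\d s\le K^\alpha T<\infty$ makes $G(t,\cdot)$ stochastically integrable, and \eqref{eq:moment_estimate_integral} yields $\E\int_0^T\norm{\int_0^T G(t,s)\,\d L(s)}^p\,\d t\le T e_{p,\alpha}(K^\alpha T)^{p/\alpha}<\infty$, so the ($\mathcal{B}([0,T])\otimes\Sigma$-measurable modification, obtained from Step 3, of the) map $t\mapsto\int_0^T G(t,s)\,\d L(s)$ lies a.s.\ in $L^p([0,T],H)\subset L^1([0,T],H)$. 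Likewise $\int_0^T\norm{G(t,s)}_{\mathcal{L}_2(U,H)}\,\d t\le KT$ and $\norm{\int_0^T G(t,s)\,\d t}_{\mathcal{L}_2(U,H)}\le KT$ give (b), with $s\mapsto\int_0^T G(t,s)\,\d t$ predictable as a Bochner limit of predictable processes. For the identity one first checks it for $G(t,s)=g(t)\Phi(s)$ with $g$ a step function and $\Phi\in\mathcal{S}_{\rm adp}^{\rm HS}$: by \eqref{def:simple_integral} both sides of (c) collapse to the same finite sum $\sum_i\big(\int g\mathbbm{1}_{(t_{i-1},t_i]}\big)\,\Phi_i\big(L(t_i\wedge\cdot)-L(t_{i-1}\wedge\cdot)\big)$, the interchange of the deterministic $\d t$-integral with the finite sum being elementary. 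Linearity extends (c) to finite sums of such tensor integrands.

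\textbf{Step 3 (approximation in the bounded case) and the main obstacle.} By density of tensor products in $L^\alpha([0,T]^2,\mathcal{L}_2(U,H))$ combined with the approximation of predictable processes by simple ones (as used for the earlier Lemma), pick finite sums $G_n$ of tensor integrands with $\norm{G_n}_{\mathcal{L}_2(U,H)}\le K$ and $\int_0^T\int_0^T\norm{G_n-G}_{\mathcal{L}_2(U,H)}^\alpha\,\d t\,\d s\to 0$ a.s.\ and, by boundedness, in $L^1(\Omega)$. For the left side of (c), $\int_0^T\norm{\int_0^T(G_n-G)(t,s)\,\d t}_{\mathcal{L}_2(U,H)}^\alpha\,\d s\le T^{\alpha-1}\int_0^T\int_0^T\norm{(G_n-G)(t,s)}_{\mathcal{L}_2(U,H)}^\alpha\,\d t\,\d s\to 0$ a.s., so $\int_0^T G_n(t,\cdot)\,\d t\to\int_0^T G(t,\cdot)\,\d t$ in $L_P^0\big(\Omega,L^\alpha([0,T],\mathcal{L}_2(U,H))\big)$ and the corresponding stochastic integrals converge uniformly in probability. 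For the right side, \eqref{eq:moment_estimate_integral} gives $\E\int_0^T\norm{\int_0^T(G_n-G)(t,s)\,\d L(s)}^p\,\d t\le T e_{p,\alpha}\big(\E\int_0^T\int_0^T\norm{(G_n-G)(t,s)}_{\mathcal{L}_2(U,H)}^\alpha\,\d s\,\d t\big)^{p/\alpha}\to 0$, hence along a subsequence $\int_0^T G_n(\cdot,s)\,\d L(s)\to\int_0^T G(\cdot,s)\,\d L(s)$ in $L^p([0,T],H)$ a.s., so the Bochner integrals over $t$ converge in probability. Passing to the limit in (c) for $G_n$ gives (c) for bounded $G$, and Step 1 completes the proof. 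I expect the genuine difficulty to be the measurability bookkeeping — producing a jointly measurable modification of $(t,\omega)\mapsto\int_0^T G(t,s,\omega)\,\d L(s)$ so that the outer Bochner integral is defined and the iterated constructions are legitimate — together with the fact that the hypothesis is only pathwise ($\int_0^T\int_0^T\norm{G}_{\mathcal{L}_2(U,H)}^\alpha<\infty$ a.s.) while the available integral estimates are of $L^p$-type; this is precisely what forces the \emph{spatial} truncation in Step 1 (one cannot stop a two–parameter integrand in time) and the careful shuttling between almost sure and mean convergence.
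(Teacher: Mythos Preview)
The paper does not give a detailed proof here: it simply writes ``The proof is similar as in finite dimensions; see \cite{ZX}.'' Your three-step scheme (elementary tensor integrands, $L^\alpha$-approximation under a uniform bound, removal of the bound by spatial truncation) is exactly the standard route one follows in that reference, adapted via \eqref{eq:moment_estimate_integral} in place of an $L^2$-isometry, so your proposal is consistent with what the paper intends. The two points you flag as the ``main obstacle'' --- producing a jointly $\mathcal{B}([0,T])\otimes\Sigma$-measurable version of $(t,\omega)\mapsto\int_0^T G(t,s)\,\d L(s)$, and bridging the pathwise hypothesis to the $L^p$-type estimates through the truncation in Step~1 --- are precisely the places where care is needed, and you have identified them correctly.
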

\begin{proof} The proof is similar as in finite dimensions; see \cite{ZX}. 
\end{proof}

\subsection{Random measures and compensators}

In this section, we briefly recall some results on random measures and their compensators from \cite[Ch.\ II]{JS}.

\begin{Definition}[Random measure]
A family $\mu=\lbrace\mu(\omega): \omega\in\Omega\rbrace$ is called a random measure on $[0,T]\times H$ if $\mu(\omega)$ is a measure on $\mathcal{B}([0,T])\otimes\mathcal{B}(H)$ for each $\omega\in\Omega$. It is said to be an integer-valued random measure if moreover, we have
\begin{itemize}
    \item[{\rm (i)}] $\mu (\lbrace t\rbrace\times H)\leq 1$ for all $t\in [0,T]$ $P$-a.s.;
    \item[{\rm (ii)}] $\mu$ takes values in $\mathbb{N}\cup\lbrace\infty\rbrace$ $P$-a.s.
\end{itemize}
\end{Definition}

We denote by $\Tilde{\mathcal{P}}$ (resp.\ $\Tilde{\mathcal{O}}$) the \textit{predictable} (resp.\ \textit{optional}) $\sigma$-algebra on $\Omega\times  [0,T]\times H$ and call a function $W:\Omega\times [0,T]\times H \mapsto\mathbb{R}$ \textit{predictable} (resp.\ \textit{optional}) if it is $\Tilde{\mathcal{P}}$ (resp.\ $\Tilde{\mathcal{O}}$) measurable.

If $\mu$ is a random measure and $W$ is optional we define
\begin{align*}
&    \left(\int_0^t\int_HW(s,h)\,\mu({\rm d}s, {\rm d}h)\right)(\omega)\\
    &\qquad\qquad :=\begin{cases}\int_0^t\int_HW(\omega,s,h)\,\mu(\omega)({\rm d}s,{\rm d}h),& \text{if} \int_0^t\int_H|W(\omega,s,h)|\,\mu(\omega)({\rm d}s,{\rm d}h)<\infty,\\
    \infty,& \text{otherwise.}
    \end{cases}
\end{align*}
A random measure $\mu$ is called \textit{predictable} (resp.\ \textit{optional}) if 
$(\int_0^t\int_HW(s,h)\, \mu({\rm d}s,{\rm d}h):\, t\in [0,T])$
is predictable (resp.\ optional) for every predictable (resp.\ optional) function $W$. An optional random measure $\mu$ is called $\sigma$-finite if there exists a sequence $(A_n )_{n\in \mathbb{N}}\subset\Tilde{\mathcal{P}}$ with  $\bigcup_{n=1}^\infty A_n=\Omega\times [0,T]\times H$, such that
$\E\left[\int_0^T\int_{H}\mathbbm{1}_{A_n}(s,h)\,\mu({\rm d}s,{\rm d}h)\right]<\infty $ for each $n\in\mathbb{N}$.

For each  $\sigma$-finite, optional measure $\mu$ on $[0,T]\times H$ there exists a predictable random measure $\nu$ on $\mathcal{B}([0,T])\otimes \mathcal{B}(H)$ such that
\begin{align}
\E\left[\int_0^t\int_HW(s,h)\,\mu({\rm d}s,{\rm d}h)\right]=\E\left[\int_0^t\int_HW(s,h)\,\nu({\rm d}s,{\rm d}h)\right]
\label{eq:compensator_expectation}
\end{align}
for all $t\in [0,T]$, and any non-negative predictable function $W$. The measure $\nu$ is determined uniquely up to a set of probability zero by \eqref{eq:compensator_expectation} and is called the \textit{compensator} of $\mu$; see \cite[th.\ II.1.8]{JS}.

 If $Y$ is an $H$-valued, adapted c\`adl\`ag process then the integer-valued random measure $\mu^Y$ characterised by
\begin{align*}
    \mu^Y((0,t]\times B)=\sum_{0\leq s\leq t}\mathbbm{1}_B(\Delta Y(s)), \quad t\in (0,T], B\in\mathcal{B}(H), 0\notin B,
\end{align*}
where $\Delta Y(s):=Y(s)-\lim_{h\searrow 0+}Y(s-h)$ for $s\in [0,T]$, is an optional and $\sigma$-finite random measure on $\mathcal{B}([0,T])\otimes \mathcal{B}(H)$. Thus, its compensator exists which we denote by $\nu^Y$.

%

\begin{Example}
Let $L$ be a genuine $H$-valued L\'evy process with L\'evy measure $\lambda$. Then the compensator $\nu^L$ of the jump measure $\mu^L$ is given as the extension of $\mu^L((s,t]\times B)=(t-s)\lambda (B)$, $0\leq s<t\leq T$, $B\in\mathcal{B}(H)$ to $\mathcal{B}([0,T])\otimes\mathcal{B}(H)$. 
\label{example:compensator_levy}
\end{Example}

In the sequel, we will make use of another characterisation of compensators of jump-measures. We denote by $\mathcal{C}^+(H)$ the class of non-negative, continuous functions $k:H \rightarrow \mathbb{R}$ bounded on $H$ and vanishing inside a neighbourhood of $0$.
\begin{Proposition}\label{equ_char_of_comp}
The compensator $\nu^Y$ of the jump-measure $\mu^Y$ of  an $H$-valued c\`adl\`ag semimartingale $Y$
is characterised by being predictable and satisfying either of the following:
\begin{itemize}
    \item[{\rm (i)}] The process
    \begin{align*}
       \left( \int_0^t\int_Hk(h)\,\mu^Y ({\rm d}s,{\rm d}h)-\int_0^t\int_Hk(h)\,\nu^Y({\rm d}s,{\rm d}h):\; t\in [0,T]\right)
    \end{align*}
    is a local martingale for every $k\in\mathcal{C}^+(H)$.
    \item[{\rm (ii)}] If $W$ is predictable and the process
    \begin{align}
        \left(\int_0^t\int_HW(s,h)\,\mu^Y({\rm d}s,{\rm d}h):\;t\in [0,T]\right)
    \label{eq:compensator_local_martingale}
    \end{align}
    is locally integrable, then so is
    \begin{align*}
      \left(\int_0^t\int_HW(s,h)\,\nu^Y({\rm d}s,{\rm d}h):\; t\in [0,T]\right)
    \end{align*}
    and
    \begin{align*}
     \left(\int_0^t\int_HW(s,h)\,\mu^Y({\rm d}s,{\rm d}h)-\int_0^t\int_HW(s,h)\,\nu^Y({\rm d}s,{\rm d}h):\; t\in [0,T]\right)
    \end{align*}
    is a local martingale.
\end{itemize}
\end{Proposition}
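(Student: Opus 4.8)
The plan is to establish the equivalence of characterizations (i) and (ii) and their equivalence with the defining property \eqref{eq:compensator_expectation}, exploiting the standard machinery for compensators of integer-valued random measures from \cite[Ch.\ II]{JS}. First I would recall that since $Y$ is an $H$-valued c\`adl\`ag semimartingale, the jump-measure $\mu^Y$ is an optional, $\sigma$-finite integer-valued random measure (this was noted in the text preceding Example \ref{example:compensator_levy}), so its compensator $\nu^Y$ exists and is uniquely determined up to a $P$-null set by predictability together with \eqref{eq:compensator_expectation}. The argument then amounts to showing that, for a predictable random measure $\nu$, property (i) is equivalent to \eqref{eq:compensator_expectation}, and likewise (ii) is equivalent to \eqref{eq:compensator_expectation}.

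For the equivalence of (ii) with the definition: if $\nu=\nu^Y$, then (ii) is exactly \cite[Th.\ II.1.8]{JS} applied to the localizing sequence coming from local integrability of the $\mu^Y$-integral, so the compensator has property (ii). Conversely, suppose $\nu$ is predictable and satisfies (ii). Apply (ii) to the predictable functions $W(s,h)=\mathbbm{1}_{A_n}(s,h)\,g(h)$ for bounded nonnegative $g$ vanishing near $0$, where $(A_n)$ is a localizing sequence of predictable sets witnessing $\sigma$-finiteness of $\mu^Y$; the associated $\mu^Y$-integrals are integrable by construction, so the difference of $\mu^Y$- and $\nu$-integrals is a true martingale started at $0$, giving $\E[\int_0^t\int_H W\,\mathrm{d}\mu^Y]=\E[\int_0^t\int_H W\,\mathrm{d}\nu]$. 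A monotone class / monotone convergence argument extends this to all nonnegative predictable $W$, which is precisely \eqref{eq:compensator_expectation}; by the uniqueness clause after \eqref{eq:compensator_expectation}, $\nu=\nu^Y$.

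For the equivalence of (i) with (ii): clearly (ii) implies (i), since each $k\in\mathcal{C}^+(H)$ is bounded and vanishes near $0$, so $\int_0^t\int_H k(h)\,\mu^Y(\mathrm{d}s,\mathrm{d}h)=\sum_{s\le t}k(\Delta Y(s))$ is a finite sum of uniformly bounded terms — the number of jumps of size bounded below by the radius of the vanishing neighbourhood is a.s.\ finite on $[0,T]$ since $Y$ is c\`adl\`ag — hence the $\mu^Y$-integral process has bounded, and in particular locally integrable, paths. For the converse direction, one again uses that $\nu$ predictable satisfying (i) forces, via the monotone class theorem starting from the generating family $\mathcal{C}^+(H)$ of bounded functions vanishing near $0$, the identity \eqref{eq:compensator_expectation} (taking expectations in the local-martingale statement of (i) after localization, using that the integrands are bounded so the stopped processes are uniformly integrable). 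The main obstacle here is the careful bookkeeping of the localization and integrability: one must check that $\mathcal{C}^+(H)$ is rich enough to separate measures on $H\setminus\{0\}$ (this follows because continuous bounded functions vanishing near $0$ generate $\mathcal{B}(H\setminus\{0\})$ under the monotone class theorem on each annulus $\{1/m\le\|h\|\le m\}$) and that the integrability assumptions in (ii) genuinely allow the passage to true martingales. Once \eqref{eq:compensator_expectation} is recovered from either (i) or (ii), the uniqueness statement already in the text identifies $\nu$ with $\nu^Y$ and closes the argument.
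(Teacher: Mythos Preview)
Your proposal is correct and follows essentially the same approach as the paper: both invoke \cite[Th.\ II.1.8]{JS} for the equivalence of (ii) with the defining property of the compensator, and your monotone-class argument for (i) $\Leftrightarrow$ (ii) is precisely what underlies \cite[Th.\ II.2.21]{JS}, which the paper cites directly. One minor imprecision: for $k\in\mathcal{C}^+(H)$ the $\mu^Y$-integral process has a.s.\ \emph{finite} (not bounded) paths, but local integrability still follows since the jumps are bounded by $\|k\|_\infty$, so stopping when the integral exceeds $n$ gives a process bounded by $n+\|k\|_\infty$.
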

\begin{proof}
The equivalence between (i) and (ii) follows by the same argument as in the proof of \cite[Th.\ II.2.21.]{JS}. The fact that (ii) is an equivalent definition of the compensator is proved in \cite[Th.\ II.1.8.]{JS}.
\end{proof}

Proposition \ref{equ_char_of_comp} justifies the following standard notation: if $W$ is predictable and (\ref{eq:compensator_local_martingale}) is locally integrable, we define the following local martingale
\begin{align*}
    \int_0^t\int_HW(s,h)\,(\mu^Y-\nu^Y)({\rm d}s,{\rm d}h):=\int_0^t\int_HW(s,h)\,\mu^Y({\rm d}s,{\rm d}h)-\int_0^t\int_HW(s,h)\,\nu^Y({\rm d}s,{\rm d}h)
\end{align*}
for each $t\in [0,T$].

\section{Predictable compensator}

For an $\alpha$-stable cylindrical L\'evy process $L$ for some $\alpha\in (1,2)$ and a stochastically integrable predictable process $G$, we define the integral process $X=\int_0^\cdot G{\rm d}L$ and 
\begin{align}
    \nu\left((0,t]\times B\right):=\int_0^t\left(\lambda\circ G(s)^{-1}\right)(B)\,{\rm d}s
    \qquad \text{for each $t\in (0,T], B\in\mathcal{B}(H)$ with $0 \notin \Bar{B}$.}
        \label{def:measure_nu}
\end{align}
The main result of this section is that $\nu$ extends to a random measure on $\mathcal{B}([0,T])\otimes \mathcal{B}(H)$ and that the extension is the predictable compensator of the jump measure of $X$. We will derive this result by a couple of Lemmata. 
\begin{Lemma}
The set function $\nu$ defined in \eqref{def:measure_nu}
is well defined and extends to a predictable random measure on $\mathcal{B}([0,T])\otimes \mathcal{B}(H)$. This extension is unique among the class of $\sigma$-finite random measures on $\mathcal{B}([0,T])\otimes \mathcal{B}(H)$ that assign $0$ mass to the origin.
\label{lemma:compensator_is_predictable}
\end{Lemma}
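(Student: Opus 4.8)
The plan is to check that $\nu$ as defined in \eqref{def:measure_nu} is a bona fide set function on the algebra of cylindrical-type sets generated by $(0,t]\times B$ with $0\notin\bar B$, then extend by a Carathéodory/monotone-class argument, and finally verify predictability. First I would address well-definedness: for fixed $\omega$ and fixed $s$, $\lambda\circ G(s,\omega)^{-1}$ restricted to $\bar B_H(r)^c$ is a genuine $\sigma$-finite Borel measure (indeed a finite measure on each such set, by \eqref{eq:estimate_ball_alpha}), and by Lemma~\ref{le.conv_of_radonif_rv} the map $\Phi\mapsto\lambda\circ\Phi^{-1}|_{\bar B_H(r)^c}$ is continuous, hence Borel measurable, from $\mathcal{L}_2(U,H)$ to the finite Borel measures on $\bar B_H(r)^c$ with the weak topology. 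Composing with the predictable process $G$, the map $(s,\omega)\mapsto (\lambda\circ G(s,\omega)^{-1})(B)$ is jointly measurable (and predictable) for each fixed Borel $B$ bounded away from $0$, so the time integral in \eqref{def:measure_nu} makes sense; finiteness on sets of the form $(0,t]\times \bar B_H(1/m)^c$ follows from Lemma~1.1 via $\int_0^t (\lambda\circ G(s)^{-1})(\bar B_H(1/m)^c)\,\d s\le m^\alpha c_\alpha \int_0^t\norm{G(s)}_{\mathcal{L}_2(U,H)}^\alpha\,\d s<\infty$ a.s.

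Next I would establish that, for each $\omega$, $\nu(\omega)$ extends to a genuine measure on $\mathcal{B}([0,T])\otimes\mathcal{B}(H)$. The natural route is to exhaust $H\setminus\{0\}$ by the rings $\bar B_H(1/m)^c$: on each product space $[0,T]\times \bar B_H(1/m)^c$ the prescription $((s,t]\times B)\mapsto \int_s^t(\lambda\circ G(u)^{-1})(B)\,\d u$ is finitely additive and, by the monotone convergence theorem applied inside the $\d u$-integral together with $\sigma$-additivity of each $\lambda\circ G(u)^{-1}$, is in fact countably additive on the algebra generated by such rectangles; Carathéodory's extension theorem then yields a unique finite measure $\nu_m(\omega)$ on $\mathcal{B}([0,T])\otimes\mathcal{B}(\bar B_H(1/m)^c)$. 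Consistency ($\nu_{m+1}$ restricts to $\nu_m$) is immediate, so the $\nu_m(\omega)$ glue to a single $\sigma$-finite measure $\nu(\omega)$ on $\mathcal{B}([0,T])\otimes\mathcal{B}(H)$ assigning zero mass to $\{0\}$, and this $\nu(\omega)$ agrees with \eqref{def:measure_nu} on all the generating sets. Uniqueness within the class of $\sigma$-finite random measures vanishing at the origin follows from the standard $\pi$-$\lambda$ argument: two such measures agreeing on the $\pi$-system $\{(s,t]\times B:\, 0\le s<t\le T,\ B\in\mathcal{B}(H),\ 0\notin\bar B\}$, which generates $\mathcal{B}([0,T])\otimes\mathcal{B}(H\setminus\{0\})$ and admits the exhausting sequence $[0,T]\times\bar B_H(1/m)^c$ of finite measure, must coincide.

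Finally, predictability of the random measure $\nu$: by definition this means that $(\int_0^t\int_H W(s,h)\,\nu(\d s,\d h):\,t\in[0,T])$ is a predictable process for every predictable $W:\Omega\times[0,T]\times H\to\R$. For $W(s,h)=\mathbbm{1}_{(a,b]}(s)\mathbbm{1}_B(h)\mathbbm{1}_A(\omega)$ with $A\in\mathcal{F}_a$ and $0\notin\bar B$, the resulting process equals $\mathbbm{1}_A\int_{a\wedge t}^{b\wedge t}(\lambda\circ G(s)^{-1})(B)\,\d s$, which is adapted and continuous in $t$, hence predictable; one then runs a monotone-class argument over $W$, first upgrading to all bounded $\tilde{\mathcal{P}}$-measurable $W$ supported on $[0,T]\times\bar B_H(1/m)^c$ using linearity, monotone convergence, and the fact that predictable processes are closed under the relevant limits, then letting $m\to\infty$ for general non-negative predictable $W$.

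\textbf{Main obstacle.} The delicate point is the measurability/predictability of $(s,\omega)\mapsto(\lambda\circ G(s,\omega)^{-1})(B)$ — i.e.\ transferring the \emph{continuity in $\Phi$} supplied by Lemma~\ref{le.conv_of_radonif_rv} (which is only weak continuity, and only of the restriction to $\bar B_H(r)^c$) into joint measurability of the set-function-valued map and then into genuine countable additivity of the extension for each fixed $\omega$, uniformly enough in $s$ to survive the $\d s$-integration. Handling the singularity at $0$ by the exhausting rings $\bar B_H(1/m)^c$ and invoking \eqref{eq:estimate_ball_alpha}/Lemma~1.1 for the requisite finiteness is what makes the Carathéodory extension legitimate; once that is in place the rest is the routine $\pi$-$\lambda$ and monotone-class bookkeeping.
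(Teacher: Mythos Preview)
Your proposal is correct and follows essentially the same route as the paper: predictability of $(s,\omega)\mapsto(\lambda\circ G(s,\omega)^{-1})(B)$ via the weak-continuity Lemma~\ref{le.conv_of_radonif_rv} (the paper makes the implicit Dynkin step explicit by starting with open $B$ via Portmanteau/lower semicontinuity and then running a $\pi$--$\lambda$ argument), Carath\'eodory extension for each $\omega$ using the $\sigma$-finiteness coming from \eqref{eq:estimate_ball_alpha} on the exhausting sets $(0,T]\times\overline{B}_H(1/m)^c$, and a monotone-class argument for predictability of the extended random measure. The ``main obstacle'' you flag is exactly the paper's Steps~1--2, and your remedy is the same.
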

\begin{proof}
\textit{Step 1:} We show that for all open sets $B \subseteq H$ with $0 \notin \Bar{B}$ the process
\begin{align*}
    f\colon\Omega\times [0,T] \rightarrow \mathbb{R}, \quad f(\omega, t)=\left(\lambda\circ G(\omega,t)^{-1}\right)(B)
\end{align*}
is predictable. Since the function $h\colon \mathcal{L}_2(U,H) \rightarrow \mathbb{R}$ defined by $h(\Phi)= (\lambda \circ \Phi^{-1})(B)$ is lower semicontinuous by Lemma \ref{le.conv_of_radonif_rv} and the Portmonteau Theorem as the set $B$ assumed to be open,  $h$ is measurable. Since $G\colon\Omega\times [0,T] \rightarrow \mathcal{L}_2(U,H)$ is predictable, it follows that $f=h\circ G$ is predictable. \\
\textit{Step 2:} We show that $f$ is predictable for all $B \in \mathcal{B}(H \setminus \{0\})$, which will immediately imply that \eqref{def:measure_nu} is almost surely well defined and predictable as it is then just an integral of a non-negative predictable process. We define
\[\mathcal{D}=\left\{B \in \mathcal{B}(H \setminus \{0\}): \lambda \circ G(\cdot,\cdot)^{-1}(B)\, \text{ is predictable}\right\}, \]
and claim that $\mathcal{D}$ is a $\lambda$-system. Continuity of measures implies that $H \setminus \{0\} \in \mathcal{D}$ since, for all $t\in (0,T]$ and $\omega\in \Omega$, we have 
\[\left(\lambda\circ G(\omega,t)^{-1}\right)(H\setminus \{0\})=\lim_{n \rightarrow \infty}\left(\lambda\circ G(\omega,t)^{-1}\right)\left(\Bar{B}_H\left(1/n\right)^c\right),\]
where the right hand side is the limit of processes that are predictable by Step 1.  If $B \in \mathcal{D}$ then $B^c \in \mathcal{D}$ since
 \[\left(\lambda\circ G(\omega,t)^{-1}\right)(B^c)=\left(\lambda\circ G(\omega,t)^{-1}\right)(H\setminus \{0\})-\left(\lambda\circ G(\omega,t)^{-1}\right)(B). \]
The collection $\mathcal{D}$ is closed under union of increasing sequences, which follows as above from continuity of measures and predictability of the pointwise limit.  This concludes the proof of the claim that $\mathcal{D}$ is a $\lambda$-system.

We define the $\pi$-system. 
\[\mathcal{I}=\left\{B \in \mathcal{B}(H \setminus \{0\}): B\, \text{ is open}\right\}.\]
The family $\mathcal I$ is contained in $\mathcal D$, since for each $B\in \mathcal D$ we have
\[\left(\lambda \circ G(\omega,t)^{-1}\right)(B)=\lim_{n \rightarrow \infty}\left(\lambda \circ G(\omega,t)^{-1}\right)\left(B \cap \Bar{B}_H(1/n)^c\right),\]
and the right-hand side is predictable by Step 1. The Dynkin $\pi$-$\lambda$ theorem for sets, see e.g.\ \cite[Th.\ 1.1]{OK} implies  $\sigma(\mathcal{I})\subseteq \mathcal{D}$, and thus $\mathcal{D}=\mathcal{B}(H \setminus \{0\})$.
\\
\textit{Step 3:}
Let $\omega\in \Omega$ be such that $   \int_0^T\norm{G(\omega, s)}_{\mathcal{L}_2(U,H)}^\alpha{\rm d}s<\infty$.  
Equation \eqref{def:measure_nu} defines  the set function $\nu(\omega)$ on the semi-ring 
\[\mathcal{S}=\left\{(0,t]\times B: t\in [0,T] \text{ and } B\in\mathcal{B}(H \setminus \{0\})\right\}.\]
The set function $\nu(\omega)$ is $\sigma$-additive by its very definition and $\sigma$-finite, since for $n\in{\mathbb N}$ we have by \eqref{eq:estimate_ball_alpha} and \eqref{eq:stable_ball_radius} that 
\begin{align*}
    \nu(\omega)\left((0,T]\times \overline{B}_H^c\left(1/n\right)\right)=& \int_0^T\left(\lambda \circ G(\omega,s)^{-1}\right)\left(\overline{B}_H^c\left(1/n\right)\right)\,{\rm d}s\\
    =&\,n^\alpha \int_0^T\left(\lambda \circ G(\omega,s)^{-1}\right)\left(\overline{B}_H^c\right)\,{\rm d}s\\
    \leq &\,n^\alpha\, c_\alpha \int_0^T\norm{G(\omega, s)}_{\mathcal{L}_2(U,H)}^\alpha{\rm d}s<\infty.
\end{align*}
Carathéodory's extension theorem, see e.g.\ \cite[Th.\ 2.5]{OK}, implies that the set function $\nu(\omega)$ extends uniquely to a measure on $\mathcal{B}([0,T])\otimes \mathcal{B}(H\setminus\lbrace 0\rbrace)$ which we also denote by $\nu(\omega)$. 

\textit{Step 4:} It remains to show that $\nu$ is predictable.
Applying the monotone class theorem as above shows that the process $\int_0^\cdot\left(\lambda\circ G(s)^{-1}\right)(B)\,{\rm d}s$ is predictable for each $B\in \mathcal{B}(H \setminus \{0\})$.
Since
\begin{align*}
\int_0^\cdot\mathbbm{1}_{(s,t]}(u)\mathbbm{1}_A\left(\lambda\circ G(u)^{-1}\right)(B){\rm d}u
    =\int_0^\cdot\left(\lambda\circ \left(\mathbbm{1}_{(s,t]}(u)\mathbbm{1}_AG(u)\right)^{-1}\right)(B){\rm d}u,
\end{align*}
it follows that the process   $\int_0^\cdot\int_HW(u,h)\Tilde{\nu}({\rm d}u,{\rm d}h)(\cdot)$ is predictable 
for all functions $W=\mathbbm{1}_{(s,t]}\mathbbm{1}_A\mathbbm{1}_B$ with $0<s<t\leq T$, $A\in\mathcal{F}_s$ and  $B\in\mathcal{B}(H\setminus \lbrace 0\rbrace)$. An application of the  functional monotone class theorem (follows e.g. from \cite[Th.\ 3.14]{williams1991probability})
extends this result to all predictable processes $W$ on $\Omega\times [0,T]\times H$, which shows 
predictability of the random measure $\nu$ on $\mathcal{B}([0,T])\otimes \mathcal{B}(H\setminus \lbrace 0\rbrace)$.
Defining $\nu((s,t]\times\{0\}):=0 $ for any $0\leq s<t\leq T$ extends $\nu$ to a predictable random measure on $\mathcal{B}([0,T])\otimes \mathcal{B}(H)$.

\end{proof}

To  show that the random measure $\nu$ characterised by \eqref{def:measure_nu} is the compensator of the jump-measure $\mu^X$ of the integral process $X$, we  first consider the case when the integrand is an adapted, simple process.
\begin{Lemma}\label{step_compensator}
Suppose that $G$ is an adapted, simple process in $\mathcal{S}_{\rm adp}^{\rm HS}$. Then the random measure $\nu$ 
obtained in Lemma \ref{lemma:compensator_is_predictable} is the predictable compensator of $\mu^X$.
\end{Lemma}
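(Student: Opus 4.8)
The plan is to reduce to the one-dimensional (or rather the ``single Radonified increment'') case and then invoke Example \ref{example:compensator_levy} together with the uniqueness assertion of Lemma \ref{lemma:compensator_is_predictable}. Write $G=\Phi_0\mathbbm{1}_{\{0\}}+\sum_{i=1}^N\Phi_i\mathbbm{1}_{(t_{i-1},t_i]}$ as in \eqref{def:simple_process}, so that $X(t)=\sum_{i=1}^N\Phi_i\big(L(t_i\wedge t)-L(t_{i-1}\wedge t)\big)$. Since the jumps of $X$ on disjoint intervals $(t_{i-1},t_i]$ come from disjoint time sets, $\mu^X=\sum_{i=1}^N\mu^{X,i}$, where $\mu^{X,i}$ is the jump measure of the process $t\mapsto \Phi_i\big(L(t_i\wedge t)-L(t_{i-1}\wedge t)\big)$ restricted to $(t_{i-1},t_i]$; and correspondingly $\nu=\sum_{i=1}^N\nu^i$ with $\nu^i((0,t]\times B)=\int_{t_{i-1}\wedge t}^{t_i\wedge t}(\lambda\circ\Phi_i^{-1})(B)\,\d s$. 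By Proposition \ref{equ_char_of_comp}(i) and linearity, it suffices to show for each fixed $i$ and each $k\in\mathcal{C}^+(H)$ that the process
\begin{align*}
    t\mapsto \int_0^t\int_H k(h)\,\mu^{X,i}(\d s,\d h)-\int_0^t\int_H k(h)\,\nu^i(\d s,\d h)
\end{align*}
is a local martingale (predictability of $\nu$ was already established in Lemma \ref{lemma:compensator_is_predictable}). Since $\Phi_i$ takes only finitely many values and is $\mathcal{F}_{t_{i-1}}$-measurable, I would further partition $\Omega$ into the finitely many atoms $\{\Phi_i=\phi\}\in\mathcal{F}_{t_{i-1}}$, on each of which $G$ is a \emph{deterministic} operator times an indicator of a set in $\mathcal{F}_{t_{i-1}}$.

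On such an atom, the key observation is that $\big(\Phi_i L(t_{i-1}+u)-\Phi_i L(t_{i-1})\big)_{u\ge 0}$ is, conditionally on $\mathcal{F}_{t_{i-1}}$, a genuine $H$-valued Lévy process: indeed $\Phi_i$ is Hilbert--Schmidt, hence $0$-Radonifying, so $\Phi_i L(1)$ is induced by a genuine $\alpha$-stable random variable on $H$ with Lévy measure $\lambda\circ\Phi_i^{-1}$ (this is the statement recalled just before Lemma \ref{le.conv_of_radonif_rv}), and the independent-stationary-increments structure of the cylindrical Lévy process transfers to the Radonification. Therefore, by Example \ref{example:compensator_levy}, the compensator of the jump measure of this genuine Lévy process over the interval $(t_{i-1},t_i]$ is precisely $(t-s)\mapsto (t-s)(\lambda\circ\Phi_i^{-1})$, i.e.\ exactly $\nu^i$ restricted to that atom. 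Hence $\int k\,\d\mu^{X,i}-\int k\,\d\nu^i$ is, conditionally on $\mathcal{F}_{t_{i-1}}$, a martingale; an application of the tower property (and the fact that the atoms are $\mathcal{F}_{t_{i-1}}$-measurable, so multiplication by their indicators preserves the martingale property relative to $(\mathcal{F}_{t_{i-1}+u})_u$) upgrades this to a genuine martingale on $[t_{i-1},t_i]$, and gluing the $N$ intervals together gives the local martingale property on $[0,T]$.

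Summing over $i$ and using Proposition \ref{equ_char_of_comp}(i) shows that $\nu$ satisfies the martingale characterisation of the compensator of $\mu^X$; combined with the predictability and $\sigma$-finiteness from Lemma \ref{lemma:compensator_is_predictable} and the uniqueness of the compensator (Proposition \ref{equ_char_of_comp} together with \cite[Th.\ II.1.8]{JS}), this identifies $\nu$ as $\nu^X$. The main obstacle I anticipate is the conditional argument: one must check carefully that the ``conditional Lévy process'' $\big(\Phi_i L(t_{i-1}+u)-\Phi_i L(t_{i-1})\big)_u$ genuinely has the independence-from-$\mathcal{F}_{t_{i-1}}$ and stationarity properties needed to apply Example \ref{example:compensator_levy} on the atom $\{\Phi_i=\phi\}$, and that conditioning does not disturb the Radonification. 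A clean way to organise this is to verify the martingale identity directly via \eqref{eq:compensator_expectation}-type computations: for $k\in\mathcal{C}^+(H)$ and $s<t$ in $(t_{i-1},t_i]$, compute $\E\big[\int_s^t\int_H k\,\d\mu^{X,i}\,\big|\,\mathcal{F}_s\big]$ using the independent stationary increments of $L$ and the estimate \eqref{eq:estimate_ball_alpha} to guarantee integrability of $\int_H k\,\d\mu^{X,i}$ (recall $k$ vanishes near $0$, so only the finite quantity $(\lambda\circ\Phi_i^{-1})(\{k>0\})$ enters), and match it with $\E\big[\int_s^t\int_H k\,\d\nu^i\,\big|\,\mathcal{F}_s\big]=(t-s)(\lambda\circ\Phi_i^{-1})(k)$ on the set $\{\Phi_i=\phi\}$.
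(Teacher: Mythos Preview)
Your proposal is correct and follows essentially the same route as the paper: decompose into the intervals $(t_{i-1},t_i]$, partition $\Omega$ into the finitely many $\mathcal{F}_{t_{i-1}}$-atoms $\{\Phi_i=\phi_{i,j}\}$, and on each atom invoke Example \ref{example:compensator_levy} for the genuine $H$-valued L\'evy process $\phi_{i,j}L$ together with independence of increments from $\mathcal{F}_{t_{i-1}}$. The only cosmetic difference is that the paper verifies the defining expectation identity \eqref{eq:compensator_expectation} directly (via the monotone class theorem, reducing to test functions $\mathbbm{1}_A\mathbbm{1}_{(s,t]}\mathbbm{1}_B$ with $A\in\mathcal{F}_s$), whereas you frame it through the local-martingale characterisation of Proposition \ref{equ_char_of_comp}(i); your ``clean way'' at the end is precisely the paper's computation.
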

\begin{proof}
Since Lemma \ref{lemma:compensator_is_predictable} guarantees that $\nu$ is predictable, it remains to show \eqref{eq:compensator_expectation}, which by the functional monotone class theorem reduces to proving 
\begin{align*}
    \E\left[\mathbbm{1}_A\sum_{s<u\leq t}\mathbbm{1}_B(\Delta X(u))\right]=\E\left[\mathbbm{1}_A\int_s^t\left(\lambda\circ G(u)^{-1}\right)(B)\,\d u\right]
\end{align*}
for any $0<s<t\leq T$, $A\in\mathcal{F}_s$ and $B\in\mathcal{B}(H)$ with  $0\not\in\Bar{B}$. 
Let $G$ be of the form \eqref{def:simple_process}, and assume that the points of the partition contain $s$ and $t$; otherwise these can be added. Then $X$ takes the form \eqref{def:simple_integral}, and it follows  
\begin{align*}
    \mathbbm{1}_A\sum_{s<u\leq t}\mathbbm{1}_B(\Delta X(u))
    &=\mathbbm{1}_A\sum_{i=1}^N\sum_{s \leq t_{i-1}<u\leq t_i \leq t}\mathbbm{1}_B(\Delta G(t_i) L(u))\\
    &=\mathbbm{1}_A\sum_{i=1}^N\sum_{s \leq t_{i-1}<u\leq t_i \leq t}\mathbbm{1}_B(\Delta\Phi_i L(u)).
\end{align*}
For each $i\in \{1,\dots, N\}$, the random variable $\Phi_i$ is of the form
$\Phi_i=\sum_{j=1}^{m_i}\mathbbm{1}_{A_{i,j}}\phi_{i,j}$ for some pairwise disjoint sets $A_{i,j}\in\mathcal{F}_{t_{i-1}}$ and  $\phi_{i,j}\in\mathcal{L}_2(U,H)$ for $j\in\lbrace 1,\dots,m_i\rbrace$.
Since $0\not\in\Bar{B}$, we have
\begin{align*}
    \E\left[\mathbbm{1}_A\sum_{t_{i-1}<u\leq t_i}\mathbbm{1}_B(\Delta\Phi_i L(u))\right]&=\sum_{j=1}^{m_i}\E\left[\mathbbm{1}_{A\cap A_{i,j}}\sum_{t_{i-1}<u\leq t_i}\mathbbm{1}_B(\Delta\phi_{i,j}L(u))\right]\\
    &=\sum_{j=1}^{m_i}(t_i-t_{i-1})\E\left[\mathbbm{1}_{A\cap A_{i,j}}\left(\lambda\circ\phi_{i,j}^{-1}\right)(B)\right]\\
    &=(t_i-t_{i-1})\E\left[\mathbbm{1}_{A}\left(\lambda\circ\Phi^{-1}_i\right)(B)\right],
\end{align*}
because $A\cap A_{i,j}\in\mathcal{F}_{t_{i-1}}$ and the compensator of the jump measure of the L\'evy process $\phi_{i,j}L$ in $H$ is given by $\left(\lambda\circ\phi_{i,j}^{-1}\right)\,\d h\,{\rm d}t$ since its L\'evy measure is $\left(\lambda\circ\phi_{i,j}^{-1}\right)$, see Example \ref{example:compensator_levy}.
\end{proof}

Before we show that the result of Lemma \ref{step_compensator} can be extended to general integrands, we need to prove some technical Lemmata. Recall the class of functions $\mathcal{C}^+(H)$ used in Proposition \ref{equ_char_of_comp} (and defined just before) to determine the compensator.
\begin{Lemma}\label{limit_of_jump_measure}
Let $( f_n)_{n\in \mathbb{N}}$ be a sequence of  c\`adl\`ag functions $f_n\colon[0,T]\to H$ converging uniformly to $f\colon[0,T]\rightarrow H$. Then we have for any $k \in \mathcal{C}^+(H)$ that 
\begin{align}
    \lim_{n \rightarrow \infty}\sup_{t \in [0,T]} \left \vert\sum_{0\leq s\leq t}k(\Delta f_n(s))-\sum_{0\leq s\leq t}k(\Delta f(s))\right \vert=0.
\label{eq:limit_of_jump_measure}
\end{align}
\end{Lemma}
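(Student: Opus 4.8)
The plan is to prove \eqref{eq:limit_of_jump_measure} by reducing the sum over all jumps to a sum over finitely many ``large'' jumps, using that $k$ vanishes near the origin. Fix $k\in\mathcal{C}^+(H)$ and let $\delta>0$ be such that $k\equiv 0$ on $\overline{B}_H(\delta)$, and let $M:=\sup_{h\in H}|k(h)|<\infty$. The uniform convergence $f_n\to f$ gives, for every $\varepsilon>0$, an $N$ such that $\sup_{t\in[0,T]}\norm{f_n(t)-f(t)}<\varepsilon$ for $n\ge N$; consequently $\norm{\Delta f_n(s)-\Delta f(s)}\le 2\sup_{t}\norm{f_n(t)-f(t)}<2\varepsilon$ for all $s\in[0,T]$. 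Taking $\varepsilon<\delta/4$, we see that at any time $s$, if $\norm{\Delta f_n(s)}\ge \delta/2$ then $\norm{\Delta f(s)}\ge \delta/2 - 2\varepsilon > 0$, and symmetrically; so for $n\ge N$ every jump of $f_n$ of size $\ge\delta/2$ is ``matched'' by a jump of $f$ of size $\ge\delta/4$, and vice versa. Since a càdlàg function on $[0,T]$ has only finitely many jumps exceeding any fixed threshold, the set $J$ of times $s\in[0,T]$ at which $\norm{\Delta f(s)}\ge \delta/4$ is finite; enlarging $N$ if necessary we may also assume that for $n\ge N$ the jump times of $f_n$ with $\norm{\Delta f_n(s)}\ge\delta/2$ are contained in $J$.

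With this set-up, for $n\ge N$ and any $t\in[0,T]$, only jump times in $J$ can contribute a nonzero term to either sum in \eqref{eq:limit_of_jump_measure}: if $s\notin J$ then $\norm{\Delta f(s)}<\delta/4<\delta$ so $k(\Delta f(s))=0$, and $\norm{\Delta f_n(s)}<\delta/2<\delta$ (since its jumps of size $\ge\delta/2$ lie in $J$) so $k(\Delta f_n(s))=0$. Hence
\begin{align*}
\sup_{t\in[0,T]}\left\vert\sum_{0\le s\le t}k(\Delta f_n(s))-\sum_{0\le s\le t}k(\Delta f(s))\right\vert
\le \sum_{s\in J}\left\vert k(\Delta f_n(s))-k(\Delta f(s))\right\vert.
\end{align*}
For each fixed $s\in J$ we have $\Delta f_n(s)\to\Delta f(s)$ in $H$ (because $f_n(s)\to f(s)$ and $f_n(s-h)\to f(s-h)$ for the appropriate one-sided limits, both following from uniform convergence), and $k$ is continuous, so $k(\Delta f_n(s))\to k(\Delta f(s))$. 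Since $J$ is finite, the right-hand side above tends to $0$ as $n\to\infty$, which proves \eqref{eq:limit_of_jump_measure}.

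The only delicate point — and the place I would be most careful — is the claim that the jump times of $f_n$ with $\norm{\Delta f_n(s)}\ge\delta/2$ eventually all lie in the finite set $J$, uniformly in $n$. This needs the two-sided matching argument above: a jump of $f_n$ of size $\ge\delta/2$ forces, via $\norm{\Delta f_n(s)-\Delta f(s)}<2\varepsilon<\delta/4$, a jump of $f$ of size $>\delta/4$, hence $s\in J$; this requires only one threshold choice $\varepsilon<\delta/8$, say, and is valid simultaneously for all $n\ge N$ because the uniform bound $\sup_t\norm{f_n(t)-f(t)}<\varepsilon$ is uniform in $s$. Everything else is routine: finiteness of large-jump sets for càdlàg functions, continuity of $k$, and the triangle inequality. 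A minor technical remark is that one should handle the endpoints $s=0$ and $s=T$ consistently with the convention $\Delta f(s)=f(s)-f(s-)$ used in the paper, but this adds at most two terms to the finite sum and does not affect the argument.
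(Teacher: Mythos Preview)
Your proof is correct and follows essentially the same route as the paper: use uniform convergence to get $\sup_s\norm{\Delta f_n(s)-\Delta f(s)}\to 0$, isolate a finite set of ``large'' jump times of $f$ outside of which both $k(\Delta f(s))$ and $k(\Delta f_n(s))$ vanish for large $n$, and then use continuity of $k$ on that finite set. The paper phrases the finite set as $D=\{t:\Delta f(t)\in \text{supp}(k)_\delta\}$ with $\delta=\tfrac12\operatorname{dist}(0,\text{supp}(k))$, while you use $J=\{s:\norm{\Delta f(s)}\ge\delta/4\}$, but these play the same role. One small cleanup: your initial choice $\varepsilon<\delta/4$ only gives $\norm{\Delta f(s)}>0$, not $\ge\delta/4$; the sharper threshold $\varepsilon<\delta/8$ that you invoke in your closing paragraph is what is actually needed, so just use it from the start.
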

\begin{proof}
Both sums in (\ref{eq:limit_of_jump_measure}) are finite by the c\`adl\`ag property of $f,f_n$ and since $k$ vanishes inside a neighbourhood of $0$. The assumed uniform convergence implies 
\begin{align}
  \lim_{n\to\infty}  \sup_{t\in [0,T]}\left\Vert\Delta f_n(t)-\Delta f(t)\right\Vert = 0.
    \label{proof:limit_of_jump_measure_2}
\end{align}
Denoting  $\text{supp}(k):=\lbrace h\in H: k(h)\neq 0\rbrace$ and $\delta:=\frac{1}{2}\text{dist}(0,\text{supp}(k))$, 
we obtain that $
    \text{supp}(k)_\delta:=\lbrace h\in H,\text{dist}(h,\text{supp}(k))<\delta\rbrace,
$ is bounded away from zero, i.e. $  0\notin\overline{\text{supp}(k)_{\delta}}\label{eq.0_not_in_support}$.
It follows that the set $ D:=\lbrace t\in [0,T]: \Delta f(t)\in\text{supp}(k)_\delta\rbrace $
is finite, which together with continuity of $k$ and \eqref{proof:limit_of_jump_measure_2} implies
\begin{align}
\lim_{n \rightarrow \infty}\sup_{t\in D}{|k(\Delta f_n(t))-k(\Delta f(t))|}= 0.
\label{proof:limit_of_jump_measure}
\end{align}
Since \eqref{proof:limit_of_jump_measure_2} guarantees that there exists $n_0\in\mathbb{N}$ such that we have $ \Delta f_n(t)\notin\text{supp}(k)$ for all $n\ge n_0$ and $t\in [0,T]\setminus D $,
we conclude from  \eqref{proof:limit_of_jump_measure} for $n \geq n_0$ that
\begin{align*}
    \sup_{t\in [0,T]}\left\vert\sum_{0\leq s\leq t}k(\Delta f_n(s))-\sum_{0\leq s\leq t}k(\Delta f(s))\right\vert=&\sup_{t\in [0,T]}\left\vert\sum_{s\in D\cap [0,t]}k(\Delta f_n(s))-\sum_{s\in D\cap [0,t]}k(\Delta f(s))\right\vert\\
    \leq & |D|\sup_{t\in D}{\left \vert k(\Delta f_n(t))-k(\Delta f(t))\right \vert}\to 0, \quad n\to\infty.
\end{align*}
The proof is complete.
\end{proof}

\begin{Lemma}\label{limit_of_compensators}
Let $g_n, g\in L_{\rm Leb}^\alpha([0,T],\mathcal{L}_2(U,H))$, $n\in\N$, be such that $g_n$ converges to $g$ in 
$L_{\rm Leb}^\alpha([0,T],\mathcal{L}_2(U,H))$ and pointwise for almost every $s\in [0,T]$.  Then we obtain for each  $k \in \mathcal{C}^+(H)$ that
\begin{align*}
    \lim_{n \rightarrow \infty}\sup_{t \in [0,T]}\left \vert\int_0^t\int_H k(h)\, \left(\lambda \circ g_n(s)^{-1}\right)\,{\rm d}h\,{\rm d}s-\int_0^t\int_H k(h)\,\left(\lambda \circ g(s)^{-1}\right)\,{\rm d}h\,{\rm d}s\right \vert=0.
\end{align*}
\end{Lemma}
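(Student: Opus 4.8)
The plan is to reduce the claim to an $L^1([0,T])$‑convergence statement for the integrands and then combine the weak‑continuity Lemma~\ref{le.conv_of_radonif_rv} with the stable scaling estimates. Write
$F_n(s):=\int_H k(h)\,(\lambda\circ g_n(s)^{-1})(\d h)$ and $F(s):=\int_H k(h)\,(\lambda\circ g(s)^{-1})(\d h)$; these are non‑negative, measurable (by the argument in Step~1--2 of Lemma~\ref{lemma:compensator_is_predictable} applied to the deterministic maps $g_n,g$), and, as the domination step below shows, integrable over $[0,T]$. Since
\[
\sup_{t\in[0,T]}\left|\int_0^t F_n(s)\,\d s-\int_0^t F(s)\,\d s\right|\le \int_0^T|F_n(s)-F(s)|\,\d s,
\]
it suffices to prove $F_n\to F$ in $L^1([0,T])$.

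First I would establish pointwise convergence a.e. Because $k\in\mathcal{C}^+(H)$ vanishes on a neighbourhood of the origin, there is $r>0$ with $k(h)=0$ for $\norm{h}\le r$, so $F_n(s)=\int_{\overline{B}_H(r)^c} k(h)\,(\lambda\circ g_n(s)^{-1})(\d h)$ and likewise for $F$. For every $s$ at which $g_n(s)\to g(s)$ in $\mathcal{L}_2(U,H)$ — that is, for a.e.\ $s\in[0,T]$ — Lemma~\ref{le.conv_of_radonif_rv} gives that $\lambda\circ g_n(s)^{-1}|_{\overline{B}_H(r)^c}$ converges weakly to $\lambda\circ g(s)^{-1}|_{\overline{B}_H(r)^c}$; since $k$ restricted to $\overline{B}_H(r)^c$ is bounded and continuous, the Portmanteau theorem yields $F_n(s)\to F(s)$ for a.e.\ $s\in[0,T]$.

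Next I would supply the domination. Using $0\le F_n(s)\le \norm{k}_\infty\,(\lambda\circ g_n(s)^{-1})(\overline{B}_H(r)^c)$ together with the scaling identity \eqref{eq:stable_ball_radius} and then \eqref{eq:estimate_ball_alpha}, one gets $F_n(s)\le C\,\norm{g_n(s)}_{\mathcal{L}_2(U,H)}^\alpha$ with $C:=c_\alpha r^{-\alpha}\norm{k}_\infty$, and the same bound for $F$ with $g$ in place of $g_n$. Hence
\[
|F_n(s)-F(s)|\le C\left(\norm{g_n(s)}_{\mathcal{L}_2(U,H)}^\alpha+\norm{g(s)}_{\mathcal{L}_2(U,H)}^\alpha\right)=:\Psi_n(s).
\]
The hypothesis that $g_n\to g$ both pointwise a.e.\ and in $L_{\rm Leb}^\alpha([0,T],\mathcal{L}_2(U,H))$ gives $\Psi_n\to 2C\norm{g(\cdot)}_{\mathcal{L}_2(U,H)}^\alpha$ a.e.\ and $\int_0^T\Psi_n(s)\,\d s\to 2C\int_0^T\norm{g(s)}_{\mathcal{L}_2(U,H)}^\alpha\,\d s$. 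The generalised dominated convergence theorem, applied with the convergent dominating sequence $(\Psi_n)$, then yields $\int_0^T|F_n(s)-F(s)|\,\d s\to 0$, which completes the proof.

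The only genuinely delicate point is this interchange of limit and $s$‑integration: there is no single integrable majorant (the natural bound involves $\norm{g_n(s)}^\alpha$, which depends on $n$), so one must use the generalised dominated convergence theorem and exploit that the $L^\alpha$‑convergence of $g_n$ provides exactly the convergence $\int\Psi_n\to\int\Psi$ it requires. Everything else is a routine combination of Lemma~\ref{le.conv_of_radonif_rv}, the Portmanteau theorem, and the stable scaling estimates \eqref{eq:estimate_ball_alpha}--\eqref{eq:stable_ball_radius}.
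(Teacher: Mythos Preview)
Your proof is correct and follows essentially the same approach as the paper: pointwise convergence via Lemma~\ref{le.conv_of_radonif_rv}, domination by $c\norm{g_n(s)}_{\mathcal{L}_2(U,H)}^\alpha$, and the generalised dominated convergence theorem. The only difference is that the paper applies the generalised DCT to obtain convergence of $\int_0^t F_n\to\int_0^t F$ for each fixed $t$ and then upgrades to uniform convergence via Dini's second theorem (monotone continuous functions converging pointwise to a continuous limit), whereas you bypass Dini entirely by applying the generalised DCT to $|F_n-F|$ and using the elementary bound $\sup_t\bigl|\int_0^t(F_n-F)\bigr|\le\int_0^T|F_n-F|$; your route is slightly more direct.
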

\begin{proof}
Lemma \ref{le.conv_of_radonif_rv} implies for almost each $s \in [0,T]$ and every $n\in\N$ that
\begin{align*}
    \lim_{n \rightarrow \infty}\int_H k(h)\, \left( \lambda \circ g_n(s)^{-1} \right)\,{\rm d}h = \int_H k(h)\, \left( \lambda \circ g(s)^{-1} \right)\,{\rm d}h.
\end{align*}
Since $k$ is bounded and vanishes in a neighbourhood of $0$, we conclude from inequality \eqref{eq:estimate_ball_alpha}
\begin{align*}
    \int_H k(h)\, \left( \lambda \circ g_n(s)^{-1} \right)\,{\rm d}h \leq c_{k,\alpha}\abs{g_n(s)}_{\mathcal{L}_2(U,H)}^{\alpha},
\end{align*}
for a constant $c_{k,\alpha}$ independent of $s\in[0,T]$ and $n\in\mathbb{N}$. Since for each $t\in [0,T]$ we have
\begin{align*}
    \lim_{n \rightarrow \infty} \int_0^t  \abs{g_n(s)}_{\mathcal{L}_2(U,H)}^{\alpha}\,{\rm d}s= \int_0^t\abs{g(s)}_{\mathcal{L}_2(U,H)}^{\alpha}\,{\rm d}s,
\end{align*}
 the generalised Lebesgue's dominated convergence theorem, see e.g.\ \cite[Th.\ 4.19]{RF}, implies \begin{align*}
    \lim_{n \rightarrow \infty}\int_0^t\int_H k(h)\, \left( \lambda \circ g_n(s)^{-1} \right)\,{\rm d}h\,{\rm d}s = \int_0^t\int_H k(h)\, \left( \lambda \circ g(s)^{-1} \right)\,{\rm d}h\,{\rm d}s.
\end{align*}
As the functions 
\begin{align*}
   t\mapsto \int_0^t\int_H k(h)\, \left( \lambda \circ g_n(s)^{-1} \right)\,{\rm d}h\,{\rm d}s
\end{align*}
are continuous monotone and converge pointwise to a continuous limit on $[0,T]$, the convergence is uniform 
by \cite[p.\ 81/127]{PSZ} (or deuxième théorème de Dini). 
\end{proof}

Now we can prove the main result of this section.

\begin{Thm}\label{th.compensator}
Let $L$ be an $\alpha$-stable cylindrical L\'evy process $L$ for some $\alpha\in (1,2)$ and $G$ a stochastically integrable predictable process. Then the predictable compensator $\nu^X$ of the jump measure $\mu^X$ of  $X:=\int_0^\cdot G{\rm d}L$ is characterised by \eqref{def:measure_nu}.
\end{Thm}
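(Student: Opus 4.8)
The plan is to verify the characterisation of the compensator in Proposition~\ref{equ_char_of_comp}(i). Since Lemma~\ref{lemma:compensator_is_predictable} already provides that $\nu$ is predictable, $\sigma$-finite and charges no mass to the origin, and since $X=\int_0^\cdot G\,{\rm d}L$ is a local martingale by Lemma~\ref{le.local_martingle_property} and hence a c\`adl\`ag semimartingale, it suffices to show that for every $k\in\mathcal{C}^+(H)$ the process $N_k(t):=\sum_{0<s\le t}k(\Delta X(s))-\int_0^t\int_H k(h)\,\nu({\rm d}s,{\rm d}h)$, $t\in[0,T]$, is a local martingale; uniqueness of the compensator together with the uniqueness assertion of Lemma~\ref{lemma:compensator_is_predictable} then forces $\nu=\nu^X$.

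First I would localise with the predictable stopping times $\tau_n=\inf\{t:\int_0^t\norm{G(s)}_{\mathcal{L}_2(U,H)}^\alpha\,{\rm d}s>n\}$ from the proof of Lemma~\ref{le.local_martingle_property}: as the Lebesgue integral has no atoms, $\int_0^T\norm{G(s)\mathbbm{1}_{[0,\tau_n]}(s)}_{\mathcal{L}_2(U,H)}^\alpha\,{\rm d}s\le n$ a.s., and using \eqref{eq:stopped_integral}, the definition \eqref{def:measure_nu}, and the fact that $(\lambda\circ 0^{-1})(B)=0$ for $0\notin\Bar B$, one checks $N_k^{\tau_n}=N_k^{(n)}$, where $N_k^{(n)}$ is built from $X^{(n)}:=\int_0^\cdot G\mathbbm{1}_{[0,\tau_n]}\,{\rm d}L$ and the measure $\nu^{(n)}$ obtained from \eqref{def:measure_nu} with integrand $G\mathbbm{1}_{[0,\tau_n]}$. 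Since $\tau_n\ge T$ eventually a.s.\ by stochastic integrability of $G$, it remains to prove that each $N_k^{(n)}$ is a martingale on $[0,T]$. Fixing $n$, I would approximate $G\mathbbm{1}_{[0,\tau_n]}$ by adapted simple processes $(G_j^{(n)})_{j\in\N}\subset\mathcal{S}_{\rm adp}^{\rm HS}$ with $\E[\int_0^T\norm{G\mathbbm{1}_{[0,\tau_n]}-G_j^{(n)}}_{\mathcal{L}_2(U,H)}^\alpha\,{\rm d}s]\to 0$ (as in the proof of Lemma~\ref{le.local_martingle_property}) and, along a subsequence, converging pointwise $P\otimes{\rm Leb}$-a.e. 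Writing $X_j^{(n)}:=\int_0^\cdot G_j^{(n)}\,{\rm d}L$ and letting $\nu_j^{(n)}$, $N_{k,j}^{(n)}$ be the corresponding objects, Lemma~\ref{step_compensator} identifies $\nu_j^{(n)}$ as the compensator of $\mu^{X_j^{(n)}}$, so by Proposition~\ref{equ_char_of_comp}(i) and boundedness of $G_j^{(n)}$ (together with \eqref{eq:estimate_ball_alpha}), $N_{k,j}^{(n)}=\int_0^\cdot\int_H k(h)\,(\mu^{X_j^{(n)}}-\nu_j^{(n)})({\rm d}s,{\rm d}h)$ is a uniformly integrable martingale on $[0,T]$. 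The crucial quantitative step is a uniform $L^2$-bound: $N_{k,j}^{(n)}$ is purely discontinuous with jumps $k(\Delta X_j^{(n)}(s))$, hence $\E[N_{k,j}^{(n)}(T)^2]=\E[\sum_{0<s\le T}k(\Delta X_j^{(n)}(s))^2]\le(\sup_{h\in H}k(h))\,\E[\int_0^T\int_H k(h)\,\nu_j^{(n)}({\rm d}s,{\rm d}h)]\le(\sup_{h\in H}k(h))\,c_{k,\alpha}\,\E[\int_0^T\norm{G_j^{(n)}(s)}_{\mathcal{L}_2(U,H)}^\alpha\,{\rm d}s]$, whose right-hand side is bounded in $j$ because $\E[\int_0^T\norm{G_j^{(n)}}_{\mathcal{L}_2(U,H)}^\alpha\,{\rm d}s]\to\E[\int_0^T\norm{G\mathbbm{1}_{[0,\tau_n]}}_{\mathcal{L}_2(U,H)}^\alpha\,{\rm d}s]\le n$. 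Thus $\{N_{k,j}^{(n)}(T):j\in\N\}$ is uniformly integrable.

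It remains to pass to the limit $j\to\infty$. Estimate \eqref{eq:moment_estimate_integral} with $p=1$ gives $\E[\sup_{t}\norm{X_j^{(n)}(t)-X^{(n)}(t)}]\to 0$, so along a subsequence $X_j^{(n)}\to X^{(n)}$ uniformly a.s., whence Lemma~\ref{limit_of_jump_measure} yields $\sup_{t}|\sum_{0<s\le t}k(\Delta X_j^{(n)}(s))-\sum_{0<s\le t}k(\Delta X^{(n)}(s))|\to 0$ a.s. Passing to a further subsequence, the paths $G_j^{(n)}(\omega,\cdot)$ converge to $G(\omega,\cdot)\mathbbm{1}_{[0,\tau_n(\omega)]}$ in $L_{\rm Leb}^\alpha([0,T],\mathcal{L}_2(U,H))$ and pointwise a.e.\ for a.e.\ $\omega$, so Lemma~\ref{limit_of_compensators} applied pathwise gives $\sup_t|\int_0^t\int_H k(h)\,\nu_j^{(n)}({\rm d}s,{\rm d}h)-\int_0^t\int_H k(h)\,\nu^{(n)}({\rm d}s,{\rm d}h)|\to 0$ a.s. Hence $N_{k,j}^{(n)}\to N_k^{(n)}$ uniformly in probability; combined with the uniform integrability of $\{N_{k,j}^{(n)}(T)\}$, and thus of the conditional expectations $\{N_{k,j}^{(n)}(t)=\E[N_{k,j}^{(n)}(T)\mid\mathcal{F}_t]\}$, this upgrades to $L^1$-convergence at each $t$, so the martingale property carries over to $N_k^{(n)}$ (which is c\`adl\`ag, with $\E[\sum_{0<s\le T}k(\Delta X^{(n)}(s))]<\infty$ by Fatou). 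Undoing the localisation then finishes the proof.

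I expect the main obstacle to be this last passage to the limit: a limit of martingales in the topology of uniform convergence in probability need not be a martingale, so one genuinely needs the uniform $L^2$-bound for the compensated jump-sum martingales $N_{k,j}^{(n)}$ — obtained from their quadratic variation and ultimately from \eqref{eq:moment_estimate_integral} and \eqref{eq:estimate_ball_alpha} — to secure uniform integrability and hence $L^1$-convergence. Lemmas~\ref{limit_of_jump_measure} and \ref{limit_of_compensators} are exactly tailored to control the two constituents of $N_{k,j}^{(n)}$, and some care with subsequence extraction is unavoidable, since the relevant convergences of the random integrands hold only along subsequences.
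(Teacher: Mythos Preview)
Your argument is correct and follows the same broad architecture as the paper's proof: verify Proposition~\ref{equ_char_of_comp}(i) by approximating $G$ with simple processes, invoke Lemma~\ref{step_compensator} for the approximants, and use Lemmas~\ref{limit_of_jump_measure} and~\ref{limit_of_compensators} to control the two pieces of $N_k$ in the limit. The difference lies in how the (local) martingale property is transferred to the limit. The paper does not localise at all: it takes the approximating sequence from \cite[Le.~4.3]{BR}, which already converges a.s.\ in $L^\alpha_{\rm Leb}([0,T],\mathcal{L}_2(U,H))$ and $P\otimes{\rm Leb}$-a.e., observes that the jumps of each $M_n^k$ are uniformly bounded by $\norm{k}_\infty$, and then appeals directly to \cite[Co.~IX.1.19]{JS}, which says that an a.s.\ uniform limit of local martingales with uniformly bounded jumps is again a local martingale. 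Your route is to localise first, obtain a genuine $L^2$-bound on the compensated jump-sum martingales via their quadratic variation and \eqref{eq:estimate_ball_alpha}, and use the resulting uniform integrability to pass the martingale property through $L^1$-convergence. Your approach is more elementary in that it avoids the somewhat specialised Jacod--Shiryaev stability criterion, at the price of the extra localisation step and the subsequence bookkeeping; the paper's approach is shorter precisely because that criterion absorbs both the localisation and the integrability issue in one stroke.
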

\begin{proof}
In light of Proposition \ref{equ_char_of_comp}, it suffices to show that the process $M^k$ defined by
\begin{align*}
    \left(M^k(t):= \int_0^t\int_H k(h)\, \mu^{X}({\rm d}s, {\rm d}h) - \int_0^t\int_H k(h)\, \left(\lambda \circ G(s)^{-1}\right)\,{\rm d}h\,{\rm d}s, \, t\in [0,T]\right),
\end{align*}
is a local martingale for any $k\in\mathcal{C}^+(H)$. Lemma 4.3 in  \cite{BR} guarantees that there exists a sequence $( G_n)_{n\in \mathbb{N}}$ of adapted, simple processes in $\mathcal{S}_{\rm adp}^{\rm HS}$ converging both in $L_{\rm Leb}^\alpha([0,T],L_2(U,H))$ \text{a.s.} and $P\otimes{\rm Leb}\vert_{[0,T]}-\rm a.e.$\ to $G$. Letting 
$X_n:=\int_0^\cdot G_n{\rm d}L$ and denoting  the jump-measure of $X_n$ by $\mu^{X_n}$, we define 
for each $k \in \mathcal{C}^+(H)$ and $n \in \mathbb{N}$ a process $M_n^k$ by 
\begin{align*}
    \left(M_n^k(t):=\int_0^t\int_H k(h)\, \mu^{X_n}({\rm d}s, {\rm d}h)-\int_0^t\int_H k(h)\, \left(\lambda \circ G_n(s)^{-1}\right)\,{\rm d}h\,{\rm d}s,\,t\in [0,T]\right). 
\end{align*}
Proposition \ref{equ_char_of_comp} and Lemma \ref{step_compensator} imply that $M_n^k$ is a local martingale for all $n \in \mathbb{N}$. Since for each $n \in \mathbb{N}$ and $t\in [0,T]$ we have that $\mu^{X_n} (\lbrace t\rbrace\times H)\leq 1$ almost surely, it follows that
\begin{align*}
    \norm{\Delta\left( \int_0^t\int_H k(h)\, \mu^{X_n}({\rm d}h,{\rm d}s) \right)}\leq \norm{k}_{\infty} \quad\text{a.s.}, 
\end{align*}
which shows $\norm{\Delta M_n^k(t)}\leq \norm{k}_{\infty}$ a.s.\ for all $n \in \mathbb{N}$.

Almost sure uniform convergence of $X_n$ and Lemma \ref{limit_of_jump_measure} guarantee that there exists an $\Omega_1 \subseteq \Omega$ with $P(\Omega_1)=1$ such that, for all  $\omega \in \Omega_1$, we have
\begin{align}
    \lim_{n \rightarrow \infty}\sup_{t \in [0,T]}\left \vert \int_0^t\int_H k(h)\, \mu^{X_n(\omega)}({\rm d}h,{\rm d}s) - \int_0^t\int_H k(h)\, \mu^{X(\omega)}({\rm d}h,{\rm d}s)\right \vert = 0.
\label{proof:compensator1}
\end{align}
In the same way, by convergence of $G_n$ both in $L_{\rm Leb}^\alpha([0,T],L_2(U,H))$ \text{a.s.} and $P\otimes{\rm Leb}\vert_{[0,T]}-\rm a.e.$\ and Lemma \ref{limit_of_compensators} there exists an $\Omega_2 \subseteq \Omega$ with $P(\Omega_2)=1$ such that, for all $\omega \in \Omega_2$, we have
\begin{align}
    \lim_{n \rightarrow \infty}\sup_{t \in [0,T]}\left \vert\int_0^t\int_H k(h)\, \left(\lambda \circ G_n(\omega,s)^{-1}\right)\,{\rm d}h\,{\rm d}s-\int_0^t\int_H k(h)\, \left(\lambda \circ G(\omega,s)^{-1}\right)\,{\rm d}h\,{\rm d}s\right \vert=0.
\label{proof:compensator2}
\end{align}
Equations \eqref{proof:compensator1} and \eqref{proof:compensator2} show  that $M_n^k$ converges uniformly to $M^k$ almost surely. As the jumps of $M^k_n$ are a.s.\ uniformly bounded by $\norm{k}_{\infty}$, we conclude from \cite[Co.\ IX.1.19]{JS} that $M^k$ is a local martingale and the proof is complete.
\end{proof}

\section{Quadratic variation of the integral process}

The quadratic covariation of two real-valued c\`adl\`ag semimartingales $V_1$ and $V_2$ starting from zero is the process $[V_1,V_2]$ defined by
\begin{align*}
    \left[V_1, V_2\right](t):=V_1(t)V_2(t)-\int_0^tV_1(s-)\,{\rm d}V_2(s)-\int_0^tV_2(s-)\,{\rm d}V_1(s), \quad t\in [0,T].
\end{align*}
When $V:=V_1=V_2$, we call the process $\left[V\right]:=\left[V, V\right]$ the quadratic variation of $V$. The continuous part of $\left[V\right]$ is defined by
\begin{align}
    \left[V\right]^c(t)=\left[V\right](t)-\sum_{0\leq s\leq t}\left(\Delta V(s)\right)^2 \quad \text{ for each $t\in [0,T]$.}
    \label{def:quadratic_variation_purely_discontinuous}
\end{align}
If $\left[V\right]^c=0$ we say that $V$ is purely discontinuous; see e.g.\ \cite[Se.\ II.6]{PR}.

The concept of quadratic variation is generalised for a c\`adl\`ag semimartingale $Z$ with values in the separable Hilbert space $H$ 
in  \cite[Se.\ 26]{ME}. Let $(f_i)_{i\in\N}$ denote an orthonormal basis of $H$. There exists a 
unique stochastic process $\left[\left[Z\right]\right]$ with values in the Hilbert-Schmidt tensor product of $H$ satisfying
\begin{align*}
    \langle\left[\left[Z\right]\right], f_i\otimes f_j\rangle=\left[Z_i, Z_j\right] \quad\text{for all } i,j\in\mathbb{N},
\end{align*}
where $\otimes$ denotes the tensor product and $Z_i(t)=\langle Z(t),f_i\rangle $ for $t\in [0,T]$  
are the projection processes of $Z$; see \cite[Se.\ 21.2]{ME} for brief introduction. The process $\left[\left[Z\right]\right]$ does not depend on the choice of the orthonormal basis $(f_i)_{i \in \mathbb{N}}$. The process $\left[\left[Z\right]\right]$ is called the tensor quadratic variation of $Z$ and its continuous part  $\left[\left[Z\right]\right]^c$ is defined by
\begin{align*}
    \langle\left[\left[Z\right]\right]^c(t), f_i\otimes f_j\rangle=\langle\left[\left[Z\right]\right](t), f_i\otimes f_j\rangle-\sum_{0\leq s\leq t}\Delta \left(Z^i(s)Z^{j}(s)\right) \quad \text{for all }t\in [0,T], \, i,j\in\mathbb{N}.
\end{align*} 
We say that $Z$ is purely discontinuous if $\left[\left[Z\right]\right]^c$=0.

\begin{Proposition}
    Let $L$ be an $\alpha$-stable cylindrical L\'evy process for some $\alpha\in (1,2)$ and $G$ a stochastically integrable predictable process with values in ${\mathcal L}_2(U,H)$. Then the integral process $X:=\int_0^{\cdot} G\, \d L$ is purely discontinuous.
\label{prop:purely_discontinuous}
\end{Proposition}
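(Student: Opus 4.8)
The plan is to reduce the statement to the case of a simple integrand via the approximation already used in the proof of Theorem \ref{th.compensator}, and to handle the simple case by an explicit computation based on the fact that for a deterministic Hilbert–Schmidt operator $\phi$, the process $\phi L$ is a genuine $H$-valued $\alpha$-stable Lévy process with $\alpha\in(1,2)$, hence purely discontinuous. First I would recall that by \cite[Se.\ 26]{ME}, pure discontinuity of $X$ is equivalent to $[X_i]^c=0$ for every coordinate $X_i=\langle X,f_i\rangle$ with respect to a fixed orthonormal basis $(f_i)_{i\in\N}$ of $H$, since the off-diagonal entries of $[[X]]^c$ are controlled by the diagonal ones via the Kunita–Watanabe inequality. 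So it suffices to show that each real semimartingale $X_i$ is purely discontinuous.

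Next I would establish the claim for a simple integrand $G=\sum_{j=1}^N \Phi_j \mathbbm 1_{(t_{j-1},t_j]}$ with $\Phi_j=\sum_l \mathbbm 1_{A_{j,l}}\phi_{j,l}$. On each stochastic interval $A_{j,l}\times(t_{j-1},t_j]$, the coordinate $X_i$ coincides (up to a constant) with a coordinate of the genuine $H$-valued $\alpha$-stable Lévy process $\phi_{j,l}L$, which is a pure-jump Lévy process and therefore purely discontinuous in the real-valued sense; the continuous part of the quadratic variation of a finite concatenation of such pieces (with the starting values being $\mathcal F_{t_{j-1}}$-measurable constants) is again zero, because $[X_i]^c$ adds up over the intervals and each summand vanishes. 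Hence $X=\int_0^\cdot G\,\d L$ is purely discontinuous whenever $G\in\mathcal S_{\rm adp}^{\rm HS}$.

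For a general stochastically integrable predictable $G$, I would take the approximating sequence $(G_n)_{n\in\N}\subset\mathcal S_{\rm adp}^{\rm HS}$ from Lemma 4.3 of \cite{BR}, so that $X_n:=\int_0^\cdot G_n\,\d L$ converges to $X$ uniformly on $[0,T]$ in probability (along a subsequence, almost surely). Each $X_n$ is purely discontinuous by the previous step, and I would pass to the limit: the jump part $\sum_{0\le s\le t}(\Delta X_n^i(s))^2$ converges (in the appropriate sense) using the uniform convergence of paths together with the local integrability/control of jumps, and $[X_n^i]$ converges to $[X_i]$ because $X_n^i\to X_i$ uniformly in probability and one can use that $X_n^i-X_i=\int_0^\cdot\langle(G_n-G)(\cdot),f_i\rangle\,\d L$ together with the moment estimate \eqref{eq:moment_estimate_integral} for $p<\alpha$. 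The cleanest route is to invoke the fact that the class of purely discontinuous semimartingales is closed under limits in the semimartingale (or ucp) topology — essentially \cite[Co.\ IX.1.19]{JS} or the corresponding statement for $[[Z]]^c$ — exactly as was done at the end of the proof of Theorem \ref{th.compensator}.

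The main obstacle I expect is the limit passage in the last step: controlling the continuous part of the quadratic variation under ucp convergence of semimartingales is delicate, since $[[\cdot]]^c$ is not continuous in the ucp topology in general without a bound on jumps. The remedy is the same device used for the compensator: the jumps of the relevant processes are uniformly controlled (here via the estimate \eqref{eq:moment_estimate_integral} and the structure of $\mu^{X_n}$), which upgrades ucp convergence to convergence preserving pure discontinuity. Making this precise — identifying the right truncation and citing the correct stability result from \cite{JS} — is where the real work lies; the simple-integrand case and the coordinatewise reduction are routine.
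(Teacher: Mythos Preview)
Your overall strategy---reduce to one-dimensional coordinates and then pass a finite-dimensional result through an approximation---matches the paper's. The paper also reduces to $H=\R$ (via polarisation of $[[X]]$ rather than Kunita--Watanabe, but this is cosmetic), and also argues by approximation. The two substantive differences are: (i) the paper approximates the \emph{integrator} by its finite-dimensional projections $L_n:=\pi_n L$, keeping $G$ fixed, rather than approximating the integrand by simple processes; and (ii) more importantly, the paper does not attempt to track $[X_i]^c$ or $[[X]]^c$ through the limit at all.

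The gap in your proposal is precisely the limit step you flag. You suggest invoking \cite[Co.\ IX.1.19]{JS} as in the proof of Theorem~\ref{th.compensator}, but that corollary requires the jumps of the approximating local martingales to be uniformly bounded; this was available there because the test function $k\in\mathcal C^+(H)$ was bounded, whereas here the jumps of $X_n^i$ are those of an $\alpha$-stable integral and are unbounded. More generally, pure discontinuity is not preserved under ucp limits of semimartingales without such control, and the moment bound \eqref{eq:moment_estimate_integral} for $p<\alpha$ does not furnish a jump bound. The paper sidesteps this entirely by using the characterisation \cite[Le.\ I.4.14]{JS}: a real local martingale $Y$ is purely discontinuous if and only if $YM$ is a local martingale for every continuous local martingale $M$. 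After localising with $\tau_k=\inf\{t:\int_0^t\norm{G(s)}^\alpha\,\d s\ge k\}\wedge\inf\{t:|M(t)|\ge k\}\wedge T$, one shows that $(X_n^i M)^{\tau_k}$ is a genuine martingale and converges in $L^1$ to $(X^i M)^{\tau_k}$ via \eqref{eq:moment_estimate_integral}; the $L^1$ limit of martingales is a martingale, and the conclusion follows. Your simple-integrand approximation would work just as well once you replace the attempted $[[\cdot]]^c$-stability argument by this martingale-product characterisation---that is the missing ingredient.
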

\begin{proof}
We proceed in three steps. \\
\textit{Step 1:} Assume $H=\mathbb{R}$ and $U=\mathbb{R}^d$ for some $d\in\mathbb{N}$. In this case, $L$ is a $U$-valued standard symmetric $\alpha$-stable L\'evy process, and therefore purely discontinuous; see e.g. \cite[p.\ 71]{PR}. Pure discontinuity is preserved also for the integral process; see e.g.\ \cite[Se.\ IX.5.5a]{JS} or \cite[Th.\ II.29]{PR}. \\
\textit{Step 2:} Assume $H=\mathbb{R}$, but without any further restrictions on $U$. In that case, by the identification 
$U\simeq\mathcal{L}_2(U,\mathbb{R})$, the integrand $G$ is a $U$-valued process satisfying 
\begin{align}
    \int_0^T\abs{G(t)}^\alpha {\rm d}t<\infty \quad \text{a.s.}
\label{proof:purely_dicsontinuous}
\end{align}
 Fix an orthonormal basis $(f_k)_{k\in \mathbb{N}}$ in $U$ and define for each $n \in \mathbb{N}$ the projection
\begin{align*}
\pi_n\colon U\to U, \qquad \pi_n(u)=\sum_{k=1}^n\langle u, f_k\rangle f_k.
\end{align*}
Since the projection $\pi_n$ is a Hilbert-Schmidt operator, there exists a $U$-valued L\'evy process $L_n$ with the property
$\langle L_n, u\rangle=L(\pi_n^*u)$ for all $u\in U$.  We define the approximations
\begin{align*}
\quad X_n:=\int_0^\cdot G{\rm d}L_n, \quad n\in\mathbb{N}.
\end{align*}
Since $L_n$ attains values in a finite-dimensional subspace and is a symmetric $\alpha$-stable process by  \cite[Le.\ 2.4]{RI}, it follows that $X_n$ is purely discontinuous by \textit{Step 1}.

Let $M$ be a real-valued, continuous  martingale and define for $k\in\mathbb{N}$ the stopping times
\begin{align*}
    \tau_k=\inf\left\{ t>0:\int_0^t\abs{G(s)}^\alpha {\rm d}s\geq k\right\}\land\inf\left\{ t>0:|M(t)|\geq k\right\}\land T.
\end{align*}
It follows that $\tau_k\to T$ as $k\to \infty$ by \eqref{proof:purely_dicsontinuous}. Since $X_n$ is purely discontinuous, it follows from \cite[Le.\ I.4.14]{JS} that $(X_nM)^{\tau_k}$ is a  local martingale for each $k$, $n\in\mathbb{N}$.  Since applying inequality \eqref{eq:moment_estimate_integral} and equality \eqref{eq:stopped_integral} shows
\begin{align*}
    \E\left[\sup_{0\leq t\leq T}{|(X_nM)^{\tau_k}(t)|}\right]&=\E\left[\sup_{0\leq t\leq T}\left \vert M(t)^{\tau_k}\right \vert\left \vert\int_0^t\mathbbm{1}_{[0,\tau_k]}G\, {\rm d}L_n\right \vert\right]\\
    &\leq k\E\left[\int_0^{\tau_k}\abs{G(s)}_{\mathcal{L}_2(U,H)}^\alpha \,{\rm d}s\right]
    \leq k^2<\infty, 
\end{align*}
we obtain that $(X_nM)^{\tau_k}$ is a martingale by \cite[Th.\ I:51]{PR}. 

Noting $\int_0^\cdot G\,{\rm d}L_n=\int_0^\cdot G\pi_n \, {\rm d}L$, inequality \eqref{eq:moment_estimate_integral} and equality \eqref{eq:stopped_integral} establish for each $t\geq 0$ that
\begin{align*}
    \E\left[{|(X_nM-XM)^{\tau_k}(t)|}\right]&\leq k\E\left[\left \vert\int_0^{t}\mathbbm{1}_{[0,\tau_k]}G(\pi_n-I)\,{\rm d}L\right \vert \right]\\
    &\leq e_{1,\alpha}k\left(\E\left[\int_0^{T}\abs{G(s)(\pi_n-I)}_{\rm \mathcal{L}_2(U,H)}^\alpha {\rm d}s\right]\right)^{1/\alpha}
    \to 0\quad\text{as } n\to\infty. 
\end{align*}
It follows that the process $(XM)^{\tau_k}$ as a limit of martingales is itself a martingale. Since $X$ is a local martingale according to Lemma \ref{le.local_martingle_property} and $M$ is an arbitrary real-valued continuous martingale, it follows from \cite[Le.\ I.4.14]{JS} that $X$ is purely discontinuous. \\
\textit{Step 3:} For the general case, we fix an orthonormal basis $(e_i)_{i\in\mathbb{N}}$ in $H$ and choose any $i,j\in\mathbb{N}$. Since  $\langle X(t),e_i\rangle=\int_0^t{G^*e_i}\, {\rm d}L $ for every $t\geq 0$,  the polarisation formula for real-valued covariation shows 
\begin{align*}
    \langle\left[\left[X\right]\right], e_i\otimes e_j\rangle&=\left[ \int_0^{\cdot}G^*e_i\,{\rm d}L, \int_0^{\cdot}G^*e_j\,{\rm d}L\right]\\
    &=\frac{1}{2}\left( \left[ \int_0^{\cdot}G^*(e_i+e_j)\,{\rm d}L\right]-\left[ \int_0^{\cdot}G^*e_i\,{\rm d}L\right]-\left[ \int_0^{\cdot}G^*e_j\,{\rm d}L\right]\right).
\end{align*}
Linearity of the integral and binomial formula enable us to conclude 
\begin{align*}
    \sum_{0\leq s\leq t}&{\Delta\langle X(s),e_i\rangle\langle X(s),e_j\rangle}\\
    &=\sum_{0\leq s\leq t}{\Delta\left(\int_0^sG^*e_i\,{\rm d}L\right)\left(\int_0^sG^*e_j\,{\rm d}L\right)}\\
    &=\sum_{0\leq s\leq t}\frac{1}{2}\left({\Delta \left(\int_0^sG^*(e_i+e_j)\,{\rm d}L\right)^2-\Delta\left(\int_0^sG^*e_i\, {\rm d}L\right)^2-\Delta\left(\int_0^sG^*e_j\,{\rm d}L\right)^2}\right).
\end{align*}
The very definition \eqref{def:quadratic_variation_purely_discontinuous} of the continuous part 
leads us to 
\begin{align*}
  \langle\left[\left[X\right]\right]^c, e_i\otimes e_j\rangle &=  \langle \left[\left[X\right]\right], e_i\otimes e_j\rangle-\sum_{0\leq s\leq t}{\Delta\langle X(s),e_i\rangle\langle X(s),e_j\rangle}\\
    &=\frac{1}{2}\left( \left[ \int_0^{\cdot}G^*(e_i+e_j)\,{\rm d}L\right]^c-\left[ \int_0^{\cdot}G^*e_i\,{\rm d}L\right]^c-\left[ \int_0^{\cdot}G^*e_j\,{\rm d}L\right]^c\right).
\end{align*}
Since \textit{Step 2} guarantees that the processes  $\int_0^\cdot G^*(e_i+e_j)\,{\rm d}L$, $ \int_0^\cdot G^*e_i\,{\rm d}L$
and $\int_0^\cdot G^*e_i\,{\rm d}L$ are purely discontinuous, it follows that 
$\langle\left[\left[X\right]\right]^c, e_i\otimes e_j\rangle=0$ for all $i,j\in\N$ which completes the proof. 
\end{proof}

\section{Strong Itô formula}

In this section, we establish an Itô formula for processes that are given by a differential driven by a standard symmetric $\alpha$-stable cylindrical L\'evy process $L$ for $\alpha\in (1,2)$ and are of the form
\begin{align}
dX(t)=F(t)\,{\rm d}t+G(t)\,{\rm d}L(t)  \quad \text{for }t\in [0,T],
\label{eq:differential}
\end{align}
where $F\colon\Omega\times [0,T]\rightarrow H$, $G\colon\Omega\times [0,T]\rightarrow\mathcal{L}_2(U,H)$ are predictable and satisfy
\begin{align}
    \int_0^T\norm{F(t)}+\norm{G(t)}_{\mathcal{L}_2(U,H)}^\alpha \,{\rm d}t<\infty \quad \text{a.s.}
    \label{condition:differential_integrability}
\end{align}
We denote by $\mathcal{C}_b^{2}(H)$ the space of continuous functions $f\colon H \rightarrow \mathbb{R}$ having bounded first and second Fr\'echet derivatives, which are denoted by $Df$ and $D^2f$, respectively. 

\begin{Thm}\label{thm:ito_formula}
Let $X$ be a stochastic process of the form \eqref{eq:differential}. 
It follows for each $f\in\mathcal{C}_b^{2}(H)$ and $t\in [0,T]$ that 
\begin{align*} 
f(X(t))&=
f(X(0))+ \int_0^t \langle Df(X(s-)), F(s)\rangle\, \d s +\int_0^t\langle G(s)^*Df(X(s-)), \cdot\rangle\, {\rm d}L(s)+M_f(t)\\
& + \int_0^t \int_H \Big(f(X(s-)+g)-f(X(s-))-\langle Df(X(s-)),g \rangle\Big) \, \left(\lambda\circ G(s)^{-1}\right)({\rm d}g)\, \d s,  
  \end{align*}
where $M_f:=\big(M_f(t):\, t\in [0,T]\big)$  is a local martingale defined by  
\begin{equation*}
\begin{aligned}[b]
    M_f(t):=\int_0^t\int_H \big(f(X({s-})+h)-f(X(s-))-\langle Df(X(s-)),h\rangle\big)\, (\mu^X-\nu^X)(\d s, \d h).
\end{aligned}
\label{thm:ito_formula_local_martingale}
\end{equation*}
\end{Thm}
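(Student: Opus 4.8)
The plan is to treat $X$ as an ordinary $H$-valued c\`adl\`ag semimartingale, apply the classical It\^o formula in Hilbert spaces, and then invoke the two structural results of Sections~3 and~4 to bring the output into the stated form. Since $t\mapsto\int_0^t F(s)\,\d s$ is continuous of finite variation and $M:=\int_0^\cdot G\,\d L$ is a local martingale by Lemma~\ref{le.local_martingle_property}, the process $X$ is a semimartingale, and \cite[Th.\ 27.2]{ME} gives for $f\in\mathcal{C}_b^{2}(H)$ and $t\in[0,T]$
\begin{align*}
f(X(t))=f(X(0))&+\int_0^t\langle Df(X(s-)),\d X(s)\rangle+\tfrac12\int_0^t\langle D^2f(X(s-)),\d[[X]]^c(s)\rangle\\
&+\sum_{0<s\le t}\bigl(f(X(s))-f(X(s-))-\langle Df(X(s-)),\Delta X(s)\rangle\bigr).
\end{align*}
The first step is to discard the second-order term: the continuous finite-variation part of $X$ does not contribute to the tensor quadratic variation, so $[[X]]^c=[[M]]^c$, and $[[M]]^c=0$ by Proposition~\ref{prop:purely_discontinuous}. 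For the same reason the jump measure of $X$ agrees with that of $M$, so Theorem~\ref{th.compensator} yields $\nu^X(\d s,\d h)=(\lambda\circ G(s)^{-1})(\d h)\,\d s$.

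Next I would split the first-order integral as $\int_0^t\langle Df(X(s-)),F(s)\rangle\,\d s+\int_0^t\langle Df(X(s-)),\d M(s)\rangle$ and identify the second summand with the integral $\int_0^t\langle G(s)^{*}Df(X(s-)),\cdot\rangle\,\d L(s)$ appearing in the statement. The $U$-valued integrand $s\mapsto G(s)^{*}Df(X(s-))$ is predictable and satisfies $\norm{G(s)^{*}Df(X(s-))}\le\norm{Df}_\infty\,\norm{G(s)}_{\mathcal{L}_2(U,H)}$, hence lies a.s.\ in $L_{\rm Leb}^\alpha([0,T],U)$ and is stochastically integrable. The identity $\int_0^t\langle\Psi(s),\d M(s)\rangle=\int_0^t\langle G(s)^{*}\Psi(s),\cdot\rangle\,\d L(s)$ for bounded predictable $\Psi$ is immediate when $G$ is simple, from the definition \eqref{def:simple_integral} of the Radonified increments, and carries over to general $G$ by the approximation procedure defining the integral in \cite{BR}.

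For the jump sum, since $f(X(s))=f(X(s-)+\Delta X(s))$ it equals $\int_0^t\int_H W(s,h)\,\mu^X(\d s,\d h)$, where $W(s,h):=f(X(s-)+h)-f(X(s-))-\langle Df(X(s-)),h\rangle$. Taylor's theorem gives $\abs{W(s,h)}\le\tfrac12\norm{D^2f}_\infty\norm{h}^2$ for $h\in\overline{B}_H$ and $\abs{W(s,h)}\le 2\norm{f}_\infty+\norm{Df}_\infty\norm{h}$ for $h\in\overline{B}_H^{c}$; combining this with $\nu^X(\d s,\d h)=(\lambda\circ G(s)^{-1})(\d h)\,\d s$, with \eqref{eq:estimate_ball_alpha} and with \eqref{estimate_alpha_norm} for $m=1$ produces a finite constant $C_f$, depending only on $\norm{f}_\infty,\norm{Df}_\infty,\norm{D^2f}_\infty$ and $\alpha$, such that $\int_0^t\int_H\abs{W(s,h)}\,\nu^X(\d s,\d h)\le C_f\int_0^t\norm{G(s)}_{\mathcal{L}_2(U,H)}^\alpha\,\d s$, which is a.s.\ finite by \eqref{condition:differential_integrability}. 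Localising along $\tau_n:=\inf\{t>0:\int_0^t\norm{G(s)}_{\mathcal{L}_2(U,H)}^\alpha\,\d s\ge n\}\wedge T$, for which $\tau_n\to T$ a.s., this bound gives $\E\int_0^{\tau_n}\int_H\abs{W}\,\d\nu^X\le C_f n<\infty$ and hence, by \eqref{eq:compensator_expectation}, also $\E\int_0^{\tau_n}\int_H\abs{W}\,\d\mu^X<\infty$; thus $\int_0^\cdot\int_H\abs{W}\,\d\mu^X$ is locally integrable, and Proposition~\ref{equ_char_of_comp}(ii) shows that $M_f=\int_0^\cdot\int_H W\,(\mu^X-\nu^X)(\d s,\d h)$ is a local martingale with $\int_0^t\int_H W\,\d\mu^X=\int_0^t\int_H W(s,h)\,(\lambda\circ G(s)^{-1})(\d h)\,\d s+M_f(t)$. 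Substituting these identities back and collecting terms gives the assertion.

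I expect the local-martingale property of $M_f$ to be the main obstacle. Since $X$ really does have infinitely many jumps, distributed according to $\lambda\circ G(s)^{-1}$, one cannot---as for a finite-variation L\'evy driver---control the jump sum by $\norm{h}^2$ alone near the origin and absorb the large jumps into a bounded-variation part; instead the two-sided bound on $W$ has to be matched exactly against the $\alpha$-scaling estimate \eqref{estimate_alpha_norm}, and this in turn hinges on the explicit form \eqref{def:measure_nu} of the compensator supplied by Theorem~\ref{th.compensator}, which is precisely where the cylindrical structure of the noise was handled. A secondary, more routine, difficulty is justifying the associativity identity for the stochastic integral used in the second step.
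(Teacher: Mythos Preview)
Your approach mirrors the paper's: apply M\'etivier's It\^o formula, kill the continuous quadratic-variation term via Proposition~\ref{prop:purely_discontinuous}, split the first-order integral and identify the $\d L$-part by approximation with simple integrands, and then use the explicit compensator from Theorem~\ref{th.compensator} together with the $\alpha$-scaling estimate \eqref{estimate_alpha_norm} and the same localising sequence $\tau_n$ to show via Proposition~\ref{equ_char_of_comp}(ii) that $M_f$ is a local martingale.

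One correction is needed in your jump estimate. The class $\mathcal{C}_b^2(H)$ only requires $Df$ and $D^2f$ to be bounded, not $f$ itself, so your bound $\abs{W(s,h)}\le 2\norm{f}_\infty+\norm{Df}_\infty\norm{h}$ on $\overline{B}_H^c$ may be vacuous. The paper handles this (Lemma~\ref{lemma_estimate_inner_integral}) by instead using the mean-value inequality $\abs{f(X(s-)+h)-f(X(s-))}\le\norm{Df}_\infty\norm{h}$, which gives $\abs{W(s,h)}\le 2\norm{Df}_\infty\norm{h}$ on $\overline{B}_H^c$; combined with your small-jump bound and \eqref{estimate_alpha_norm} this yields a constant depending only on $\norm{Df}_\infty$, $\norm{D^2f}_\infty$ and $\alpha$, and the rest of your argument goes through unchanged.
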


\begin{Lemma}\label{lemma_estimate_inner_integral}
Let $\lambda$ be the cylindrical L\'evy measure of an $\alpha$-stable cylindrical L\'evy process for
$\alpha \in (1,2)$. Then we have for each $f\in\mathcal{C}_b^{2}(H)$, $h\in H$, and $\Phi\in\mathcal{L}_2(U,H)$ that
\begin{equation*}
        \int_{H}\big \vert f(h+g)-f(h)-\langle Df(h), g\rangle\big \vert  \,\left(\lambda\circ \Phi^{-1}\right)({\rm d}g)\\
\leq d_\alpha^1\left(2\abs{Df}_{\infty}+\frac{1}{2}\abs{D^2f}_{\infty}\right)\norm{\Phi}_{\mathcal{L}_2(U,H)}^\alpha,
\end{equation*}
where $d_\alpha^1$ is a constant depending only on $\alpha$ as defined in Inequality \eqref{estimate_alpha_norm}.
\end{Lemma}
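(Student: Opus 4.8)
The plan is to split the integrand $g\mapsto f(h+g)-f(h)-\langle Df(h),g\rangle$ according to whether $\|g\|$ is small or large, using a second-order Taylor estimate on the unit ball and a first-order estimate outside it, and then to apply Lemma \ref{estimate_alpha_norm} with $m=1$. First I would fix $f\in\mathcal{C}_b^2(H)$, $h\in H$, $\Phi\in\mathcal{L}_2(U,H)$, and write
\begin{align*}
\int_H \bigl|f(h+g)-f(h)-\langle Df(h),g\rangle\bigr|\,(\lambda\circ\Phi^{-1})(\d g)
= \int_{\overline B_H}(\cdots)\,(\lambda\circ\Phi^{-1})(\d g) + \int_{\overline B_H^c}(\cdots)\,(\lambda\circ\Phi^{-1})(\d g).
\end{align*}

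For the inner ball $\overline B_H$, Taylor's theorem with the integral form of the remainder (or the mean value form) gives, for $\|g\|\le 1$,
\begin{align*}
\bigl|f(h+g)-f(h)-\langle Df(h),g\rangle\bigr|\le \tfrac12\,\abs{D^2f}_\infty\,\|g\|^2,
\end{align*}
so this contribution is bounded by $\tfrac12\abs{D^2f}_\infty\int_{\overline B_H}\|g\|^2\,(\lambda\circ\Phi^{-1})(\d g)$. For the complement $\overline B_H^c$, I would use the crude bounds $|f(h+g)-f(h)|\le 2\,\abs{Df}_\infty$ is not quite right — instead $|f(h+g)-f(h)|\le \abs{Df}_\infty\|g\|$ and $|\langle Df(h),g\rangle|\le \abs{Df}_\infty\|g\|$, giving $\bigl|f(h+g)-f(h)-\langle Df(h),g\rangle\bigr|\le 2\abs{Df}_\infty\|g\|$ on $\overline B_H^c$. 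Hence this contribution is bounded by $2\abs{Df}_\infty\int_{\overline B_H^c}\|g\|\,(\lambda\circ\Phi^{-1})(\d g)$.

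Adding the two bounds and invoking Lemma \ref{estimate_alpha_norm} with $m=1$, namely
\begin{align*}
\int_{\overline B_H}\|g\|^2\,(\lambda\circ\Phi^{-1})(\d g)+\int_{\overline B_H^c}\|g\|\,(\lambda\circ\Phi^{-1})(\d g)\le d_\alpha^1\,\norm{\Phi}_{\mathcal{L}_2(U,H)}^\alpha,
\end{align*}
I obtain
\begin{align*}
\int_H\bigl|f(h+g)-f(h)-\langle Df(h),g\rangle\bigr|\,(\lambda\circ\Phi^{-1})(\d g)
\le \Bigl(2\abs{Df}_\infty+\tfrac12\abs{D^2f}_\infty\Bigr) d_\alpha^1\,\norm{\Phi}_{\mathcal{L}_2(U,H)}^\alpha,
\end{align*}
which is exactly the claimed inequality. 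There is no real obstacle here: the only mild subtlety is making sure the two elementary pointwise estimates match up with the two integral terms that Lemma \ref{estimate_alpha_norm} controls (the $\|g\|^2$ term on the ball, the $\|g\|$ term off it), and checking that $\lambda\circ\Phi^{-1}$ is a genuine (Radon) measure on $H$ so that the integrals make sense — which follows since $\Phi$ is Hilbert–Schmidt and hence $0$-Radonifying, as recalled in the preliminaries. The Taylor estimate uses only the boundedness of $D^2f$ along the segment $[h,h+g]$, which is guaranteed by $f\in\mathcal{C}_b^2(H)$.
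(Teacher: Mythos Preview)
Your proposal is correct and follows essentially the same approach as the paper: split the integral over $\overline{B}_H$ and $\overline{B}_H^c$, use the second-order Taylor remainder on the ball and the first-order (mean value) bound outside, then invoke inequality \eqref{estimate_alpha_norm} with $m=1$. The only cosmetic difference is that the paper treats the two contributions $|f(h+g)-f(h)|$ and $|\langle Df(h),g\rangle|$ on $\overline{B}_H^c$ in separate displays before summing, whereas you combine them into a single $2\abs{Df}_\infty\|g\|$ estimate; the arithmetic and the final constant are identical.
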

\begin{proof}
Taylor's remainder theorem in the integral form, see \cite[Th.\ 5.8]{amman}, and Inequality \eqref{estimate_alpha_norm} imply
\begin{align} \label{proof:estimate_inner_integral}
        \int_{\overline{B}_H}&{\big \vert f(h+g)-f(h)-\langle Df(h),g\rangle\big\vert} \left(\lambda\circ \Phi^{-1}\right)({\rm d}g)\notag \\
&=\int_{\overline{B}_H}\left \vert\int_0^1\langle D^2f(h+\theta g)g, g\rangle(1-\theta)\,\d\theta \right \vert\, \left(\lambda\circ \Phi^{-1}\right)({\rm d}g)\notag\\
&\leq\abs{D^2f}_{\infty}\int_{\overline{B}_H}\left({\int_0^1\norm{g}^2(1-\theta)\,{\rm d}\theta}\right)\,\left(\lambda\circ\Phi^{-1}\right)({\rm d}g)\notag\\
&=\frac{1}{2}\norm{D^2f}_{\infty}\int_{\overline{B}_H}{\norm{g}^2}\, \left(\lambda\circ\Phi^{-1}\right)({\rm d}g)\notag\\
&\leq d_\alpha^1\frac{1}{2}\norm{D^2f}_{\infty}\norm{\Phi}_{\mathcal{L}_2(U,H)}^\alpha.
\end{align}
Similarly,  Taylor's remainder theorem in the integral form and Inequality \ref{estimate_alpha_norm} show
\begin{align}\label{proof:estimate_inner_integral-2}
\int_{\overline{B}_H^c}{\left \vert f(h+g)-f(h)\right \vert}\, \left(\lambda\circ\Phi^{-1}\right)({\rm d}g)&=\int_{\overline{B}_H^c}\left \vert\int_0^1{\langle Df(h+\theta g),g\rangle}d\theta\right \vert\, \left(\lambda\circ\Phi^{-1}\right)({\rm d}g)\notag\\
&\leq\abs{Df}_{\infty}\int_{\overline{B}_H^c}\left(\int_0^1\abs{g}\,{\rm d}\theta \right)\, \left(\lambda\circ\Phi^{-1}\right)({\rm d}g)\notag\\
&\leq d_\alpha^1\abs{Df}_\infty\abs{\Phi}_{\mathcal{L}_2(U,H)}^\alpha.
\end{align}
Another application of Inequality \ref{estimate_alpha_norm} shows
\begin{align}\label{proof:estimate_inner_integral-3}
\int_{\overline{B}_H^c}{\left \vert\langle Df(h),g\rangle\right \vert}\,\left(\lambda\circ\Phi^{-1}\right)({\rm d}g)&\leq\abs{Df}_{\infty}\int_{\overline{B}_H^c}\abs{g}\, \left(\lambda\circ\Phi^{-1}\right)({\rm d}g)\notag\\
&\leq d_\alpha^1\abs{Df}_\infty\abs{\Phi}_{\mathcal{L}_2(U,H)}^\alpha.
\end{align}
Combining inequalities \eqref{proof:estimate_inner_integral} to \eqref{proof:estimate_inner_integral-3}
completes the proof.
\end{proof}

\begin{proof}[Proof of Theorem \ref{thm:ito_formula}]
The stochastic process $X$ given by \eqref{eq:differential} is purely discontinuous as it is the sum of a finite-variation process and a purely discontinuous process according to Proposition \ref{prop:purely_discontinuous}. The Itô formula in \cite[Th.\ 27.2]{ME} takes for all $t\in [0,T]$ the form
\begin{align}
    \d f(X(t))&=\langle Df(X(t-)),\cdot\rangle \, \d X(t)\nonumber\\
    &\qquad + \int_H \big( f(X({t-})+h)-f(X(t-))-\langle Df(X(t-)),h\rangle\big) \, \mu^X({\rm d}t,{\rm d}h).
\label{eq:ito_metivier}
\end{align}
One can show by approximating with simple integrands that
\begin{align*}
    \langle Df(X(t-)), \cdot\rangle \,\d X(t)=\langle Df(X(t-)), F(t)\rangle \, {\rm d}t+\langle G(t)^*Df(X(t-)), \cdot\rangle\, {\rm d}L(t),
\end{align*}
where both integrals are well defined since \eqref{condition:differential_integrability} guarantees 
\begin{align*}
    \int_0^T|\langle Df(X(t-)),F(t)\rangle|+&\norm{\langle G(t)^*Df(X(t-)),\cdot\rangle}_{\mathcal{L}_2(U,\mathbb{R})}^\alpha \,{\rm d}t \\
    \leq &\norm{Df}_{\infty}\int_0^T\norm{F(t)}\, {\rm d}t+\norm{Df}_{\infty}^\alpha\int_0^T\norm{G(t)}_{\mathcal{L}_2(U,H)}^\alpha \, {\rm d}t<\infty \quad \text{a.s.}
\end{align*}
The definition of the compensator $\nu^X$ and Lemma \ref{lemma_estimate_inner_integral} imply
\begin{equation}
\begin{aligned}[b]
  &   \E\left[\int_0^T\int_H\big\vert f(X({s-})+h)-f(X(s-))-\langle Df(X(s-)),h\rangle\big\vert\, \mu^X({\rm d}s,{\rm d}h)\right]\\
  &\qquad =\E\left[\int_0^T\int_H \big\vert f(X({s-})+h)-f(X(s-))-\langle Df(X(s-)),h\rangle \big\vert\,\nu^X({\rm d}s,{\rm d}h)\right]\\
  &\qquad \leq d_\alpha^1\left(2\abs{Df}_{\infty}+\frac{1}{2}\abs{D^2f}_{\infty}\right)\E\left[\int_0^T\norm{G(s)}_{\mathcal{L}_2(U,H)}^\alpha \,{\rm d}s\right].
\end{aligned}
\label{proof_ito_formula_1}
\end{equation}
The stopping times
$
    \tau_n:=\inf\left\{ t>0:\int_0^t\abs{G(s)}_{\mathcal{L}_2(U,H)}^\alpha\, {\rm d}s \geq n \right\}\wedge T 
$
satisfy $\tau_n\to T$ as $n\to\infty$ by (\ref{condition:differential_integrability}).  Since inequality \eqref{proof_ito_formula_1} guarantees for all $n\in\N$ that 
\begin{align*}
    \E\left[\int_0^{T\land\tau_n}\right.\left.\int_H\vert f(X({s-})+h)-f(X(s-))-\langle Df(X(s-)),h\rangle\vert\mu^X({\rm d}s,{\rm d}h)\right]<\infty,
\end{align*}
Proposition \ref{equ_char_of_comp} shows that $M_f$ is a local martingale.  This concludes the proof, since the claimed formula is just a different form of \eqref{eq:ito_metivier}.
\end{proof}

\section{Mild solutions for stochastic evolution equations}

We recall that $U$ and $H$ are separable Hilbert spaces with norms $\norm{\cdot}$ and $L$ is a standard symmetric $\alpha$-stable cylindrical $(\mathcal{F}_t)$-Lévy process in $U$ with $\alpha\in (1,2)$. In this section we consider the mild solution of the stochastic evolution equation: 
\begin{equation}
\begin{aligned}[b]
    {\rm d}X(t)&=\big( AX(t) + F(X(t))\big)\,{\rm d}t + G(X(t-))\,{\rm d}L(t)\qquad\text{for }t\in [0,T],\\
    X(0)&=x_0, 
\end{aligned}
    \label{eq:equation}
\end{equation}
where $A$ is a generator of a $C_0$-semigroup $\left( S(t)\right)_{t\geq 0}$ in $H$, $x_0$ is an $\mathcal{F}_0$-measurable $H$-valued random variable, $F\colon H\to H$ and $G\colon H\to\mathcal{L}_2(U,H)$ are measurable mappings and $T>0$. 

\begin{Definition}
An $H$-valued predictable process $X$ is a mild solution to \eqref{eq:equation} if
\begin{align*}
    X(t)=S(t)x_0+\int_0^tS(t-s)F(X(s)){\rm d}s+\int_0^tS(t-s)G(X(s-)){\rm d}L(s) \quad \text{for every }
    t\in [0,T].
\end{align*}
\end{Definition}
We work under the following assumptions:
\begin{itemize}
    \item [(A1)] 
    The $C_0$-semigroup $\left( S(t)\right)_{t\geq 0}$ is compact, analytic and a semigroup of contractions and $0$ is an element of the resolvent set of $A$.
    \item [(A2)] The mapping $F$ is Lipschitz and bounded, i.e.\ there exists $K_F\in (0,\infty)$ such that
    \begin{align}
        \norm{F(h_1)-F(h_2)}\leq K_F\norm{h_1-h_2}, \quad \norm{F(h)}\leq K_F
        \label{ass:coef_F}
    \end{align}
    for every $h_1, h_2, h\in H$.
    \item [(A3)] The mapping $G$ is Lipschitz and bounded, i.e.\ there exists $K_G\in (0,\infty)$ such that
    \begin{align}
    \norm{G(h_1)-G(h_2)}_{\mathcal{L}_2(U,H)}\leq K_G\norm{h_1-h_2}, \quad \norm{G(h)}_{\mathcal{L}_2(U,H)}\leq K_G
    \label{ass:coef_G_0}
    \end{align}
    for every $h_1, h_2, h\in H$.
    \item [(A4)] The initial condition $x_0$ has finite $p$-th moment for every $p<\alpha$.
\end{itemize}

\begin{Remark}\label{re:compact-fractional}
We shall use  the notation $D^\delta:=\text{Dom}((-A)^\delta)$  for the domain of the fractional generator $(-A)^\delta$ for $\delta\in [0,1]$, and equip $D^\delta$ with the norm  $\abs{h}_\delta:=\abs{(-A)^\delta h}$. It follows from Assumption (A1) that the embedding of Hilbert spaces $D^{\delta}\hookrightarrow D^{\gamma}$ is dense and compact for every $ 0\leq \gamma<\delta\leq 1$, cf. \cite[Cor. 3.8.2]{bergh2012interpolation}.
\end{Remark}
\begin{Remark}\label{re:analytic}
Assumption (A1) implies, cf. \cite[p.\ 289]{HI},  that for every $\delta\geq 0$ there exists a $c_{\delta}\in (0,\infty)$ depending only on $\delta$ such that
\begin{align}
    \norm{S(t)}_{\mathcal{L}(H,D^\delta)}\leq c_{\delta} t^{-\delta}
 \qquad\text{for every $t>0$.}    \label{eq:analytic}
\end{align}
\end{Remark}
\begin{Remark}
By considering the cases $\norm{h_1-h_2}\le 1$ and  $\norm{h_1-h_2}>1$ separately, we conclude from Assumptions (A2) and (A3) that there exist $K_F,K_G\in (0,\infty)$ such that for any $\beta\in (0,1)$ we have
    \begin{align}
    \norm{F(h_1)-F(h_2)}\leq K_F\norm{h_1-h_2}^\beta, \quad \norm{F(h)}\leq K_F,
    \label{ass:coef_F2}
    \end{align}
and
    \begin{align}
    \norm{G(h_1)-G(h_2)}_{\mathcal{L}_2(U,H)}\leq K_G\norm{h_1-h_2}^\beta, \quad \norm{G(h)}_{\mathcal{L}_2(U,H)}\leq K_G,
    \label{ass:coef_G}
    \end{align}
    for every $h_1, h_2, h\in H$.
\end{Remark}

The first main theorem of this article is the following existence result, which also includes properties on the  path regularity of the solution. 

\begin{Thm}
Under the assumptions (A1)-(A4), there exists a mild solution $X$ to \eqref{eq:equation}. The mild solution $X$ is an element of $\mathcal{C}([0,T],L^p(\Omega,H))$ for every $p<\alpha$ and has c\`adl\`ag paths in $H$. 
\label{thm:existence_mild}
\end{Thm}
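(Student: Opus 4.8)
The plan is to carry out the Yosida-approximation programme announced in the introduction. For $n\in\N$ let $A_n:=nA(n-A)^{-1}$ be the Yosida approximation of $A$; it is a bounded operator generating a uniformly continuous contraction semigroup $(S_n(t))_{t\ge 0}$ with $A_nh\to Ah$ on $\mathrm{Dom}(A)$ and $S_n(t)h\to S(t)h$ for every $h\in H$. First I would show that the regularised equation
\[
\d X_n(t)=\bigl(A_nX_n(t)+F(X_n(t))\bigr)\,\d t+G(X_n(t-))\,\d L(t),\qquad X_n(0)=x_0,
\]
has a mild (hence, $A_n$ being bounded, strong) solution $X_n$. Since $G$ is bounded, $G(X_n(\cdot-))$ is stochastically integrable by the characterisation recalled in Section~2, so the stochastic integral is a genuine $H$-valued semimartingale; a solution is then produced by a successive-approximation / interlacing argument using, for the small-jump part, the $L^2$-bound that \eqref{estimate_alpha_norm} provides for $\int_{\overline{B}_H}\norm{h}^2(\lambda\circ\Phi^{-1})(\d h)$ and, for the large jumps, their almost surely locally finite number (guaranteed by \eqref{eq:estimate_ball_alpha}). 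Note that already here the usual contraction/Grönwall scheme is unavailable: the $\alpha$-stable integral maps $L^\alpha$-integrands only into $L^p$ for $p<\alpha$, so no fixed-point estimate closes in $\alpha$-th mean.

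Next I would establish a priori bounds uniform in $n$. Using $\norm{S_n(t)}_{\mathcal{L}(H)}\le 1$, boundedness of $F$ and $G$, the moment estimate \eqref{eq:moment_estimate_integral} and (A4), one gets $\sup_n\norm{X_n}_{\mathcal{C}([0,T], L^p(\Omega,H))}<\infty$ for every $p<\alpha$, the stochastic convolution being treated by the factorisation method: writing $S_n(t-s)$ as an average of $S_n(t-r)S_n(r-s)$ against the kernel $(t-r)^{\delta-1}(r-s)^{-\delta}$ with $\delta\in(0,1/\alpha)$ keeps all semigroup norms bounded by $1$ and all constants independent of $n$. Invoking the analyticity in (A1) together with \eqref{eq:analytic} (and the fact that $\norm{(-A)^{\delta}S_n(t)}_{\mathcal{L}(H)}\le c_\delta t^{-\delta}$ uniformly in $n$, a standard property of Yosida approximations of analytic generators), the same computation with $(-A)^{\delta}S_n$ in place of $S_n$ gives $\sup_n\E\norm{X_n(t)}_{D^{\delta}}^{p}<\infty$ for each $t\in(0,T]$.

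The third step is equicontinuity of $(X_n)$ in $\mathcal{C}([0,T], L^p(\Omega,H))$: writing, for $t'<t$,
\[
X_n(t)-X_n(t')=(S_n(t-t')-I)X_n(t')+\int_{t'}^{t}S_n(t-s)F(X_n(s))\,\d s+\int_{t'}^{t}S_n(t-s)G(X_n(s-))\,\d L(s),
\]
the middle term is $O(t-t')$ and the last is $O((t-t')^{\kappa})$ for some $\kappa>0$ by \eqref{eq:moment_estimate_integral} and factorisation, both uniformly in $n$; for the first term one splits $X_n(t')$ into $S_n(t')x_0$, handled by $\norm{(S_n(h)-I)x_0}_{L^p(\Omega,H)}\to0$ (uniform in $n$, after approximating $x_0$ by $\mathrm{Dom}(A)$-valued random variables), and the drift and stochastic-convolution parts, handled through $\norm{(S_n(h)-I)(-A)^{-\delta}}_{\mathcal{L}(H)}\le Ch^{\delta}$ (uniform in $n$) and the step-2 $D^{\delta}$-bound. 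The conceptual point is that although each $X_n$ jumps, its jump times have atomless laws, so $\E\norm{X_n(t)-X_n(t')}^p\to0$ as $|t-t'|\to0$ uniformly in $n$; this mean-continuity is why $\mathcal{C}([0,T], L^p(\Omega,H))$ is the natural space.

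Finally I would pass to the limit. Compactness of the semigroup in (A1), together with the uniform $D^{\delta}$-bound, is used to establish relative compactness of $\{X_n(t):n\in\N\}$ in $L^p(\Omega,H)$ for each $t$; together with the equicontinuity this yields, by an Arzelà--Ascoli argument for $L^p(\Omega,H)$-valued continuous functions and a diagonal choice over $p\uparrow\alpha$, a subsequence $X_{n_k}\to X$ in $\mathcal{C}([0,T], L^p(\Omega,H))$ for all $p<\alpha$, and, along a further subsequence, $\P\otimes\mathrm{Leb}$-almost everywhere. Passing to the limit in the mild identity for $X_{n_k}$: the deterministic terms converge by continuity of $F$, by $S_{n_k}\to S$ strongly, and by boundedness and dominated convergence; the stochastic convolution converges by the dominated convergence theorem for stochastic integrals with respect to $L$ from \cite{BR}, whose $L^\alpha$-domination hypothesis holds since $\norm{G(\cdot)}_{\mathcal{L}_2(U,H)}\le K_G$. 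Thus $X$ is a mild solution in $\mathcal{C}([0,T], L^p(\Omega,H))$ for every $p<\alpha$, and a càdlàg modification exists because $t\mapsto S(t)x_0$ and $t\mapsto\int_0^{t}S(t-s)F(X(s))\,\d s$ are continuous while $t\mapsto\int_0^{t}S(t-s)G(X(s-))\,\d L(s)$ admits a càdlàg version, once more via factorisation and analyticity of $(S(t))_{t\ge0}$. I expect the main obstacle to be this last compactness-and-identification step --- producing compactness without a semimartingale decomposition of $L$ and despite the integrability loss of the $\alpha$-stable integral, which is precisely what forces the joint use of the compact analytic semigroup and the $p$-th-mean topology --- with the construction of the approximants $X_n$ a secondary difficulty for essentially the same reason.
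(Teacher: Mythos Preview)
Your plan tracks the paper's Yosida-approximation strategy, but there is a genuine gap in the compactness step. You write that ``compactness of the semigroup \ldots\ together with the uniform $D^\delta$-bound'' gives relative compactness of $\{X_n(t):n\in\N\}$ in $L^p(\Omega,H)$. It does not: the $D^\delta$-bound and the compact embedding $D^\delta\hookrightarrow H$ yield only tightness of the \emph{laws} of $X_n(t)$ in $H$. Relative compactness in $L^p(\Omega,H)$ concerns the random variables themselves on the common probability space and additionally requires that every subsequence have a further subsequence converging in probability; tightness plus uniform integrability alone do not provide this (an i.i.d.\ $\pm1$ sequence is tight and uniformly integrable but not relatively compact in $L^p$). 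Without this ingredient Arzel\`a--Ascoli is inapplicable and no limit in $\mathcal{C}([0,T],L^p(\Omega,H))$ is produced.

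The paper supplies precisely this missing step in a dedicated lemma, showing directly that $(X_n(t))_n$ is Cauchy in $L_P^0(\Omega,H)$ for each $t$. The argument is not routine. Estimating $\E\|X_m(t)-X_n(t)\|^p$ via H\"older and \eqref{eq:moment_estimate_integral} leads to an inequality of the shape $u(t)\le u_0+w_p\int_0^t u(s)^{p/\alpha}\,\d s$ with $p<\alpha$, which ordinary Gr\"onwall cannot close. The paper applies a nonlinear Gr\"onwall inequality (Willett--Wong) and then exploits the explicit dependence $e_{p,\alpha}=\tfrac{\alpha}{\alpha-p}e_{2,\alpha}^{p/\alpha}$ to let $p\uparrow\alpha$ in the resulting bound, first on a short interval (a smallness condition on $T$) and then globally by induction on subintervals. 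You yourself note that the integrability loss of the $\alpha$-stable integral blocks the standard contraction scheme; it is exactly here, in proving Cauchy in probability, that this obstruction reappears and a new idea is required. As a secondary point, the paper's approximation differs from yours: it keeps $A$ and instead regularises the data, solving $\d X_n=(AX_n+R_nF(X_n))\,\d t+R_nG(X_n(-))\,\d L$, $X_n(0)=R_nx_0$ with $R_n=n(nI-A)^{-1}$, citing \cite{KR} for existence of $X_n$; the Cauchy estimate then hinges on strong convergence $R_n\to I$ acting on the tight families $\{F(X_m(s))\}$, $\{G(X_m(s))\}$, handled via Lemma~\ref{le.uniform_convergence}. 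The c\`adl\`ag property of the limit is obtained not by factorisation but by upgrading convergence to uniform a.s.\ along a subsequence (using the dilation theorem for the stochastic convolution), so that c\`adl\`ag paths are inherited from the approximants.
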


We will obtain the solution to \eqref{eq:equation} by using the Yosida approximations. For this purpose, we define $ R_n = n\left(n{\rm I}-A\right)^{-1}$ for $n\in\mathbb{N}$ and denote by $X_n$ the mild solution to
\begin{equation}
\begin{aligned}[b]
    {\rm d}X_n(t) &= \big( AX_n(t) + R_nF(X_n(t))\big)\,{\rm d}t + R_nG(X_n(t-))\,{\rm d}L(t),\\
    X_n(0) &=R_nx_0.
\end{aligned}
    \label{eq:equation_approx}
\end{equation}
Existence of the mild solution $X_n$ to \eqref{eq:equation_approx} with c\'adl\'ag paths is guaranteed by  \cite[Th.\ 12]{KR}.

\begin{Remark}
We recall that under Assumption (A1) we have for all $\delta\in [0,1]$ that
    \begin{align*}
        \norm{R_n}_{\mathcal{L}(D^\delta)}\leq 1, \quad n\in\mathbb{N}.
    \end{align*}
This follows from the fact, that if an operator commutes with $A$ then it commutes with $A^\gamma$, see e.g.\ \cite[Pr.\ 3.1.1]{HA}, which enables us to conclude     for every $n\in\mathbb{N}$ that
\begin{align*}
        \norm{R_n}_{\mathcal{L}(D^\gamma)}
        =\sup_{\norm{(-A)^\gamma h}\leq 1}\norm{n (n - A)^{-1}(-A)^\gamma h}
        \leq\sup_{\norm{h}\leq 1}\norm{n (n-A)^{-1}h} 
        =\norm{R_n}_{\mathcal{L}(H)}. 
    \end{align*}
Since $(S(t))_{t\ge 0}$ is a contraction semigroup,  Theorem 3.5 in \cite{EN} guarantees $\norm{R_n}_{\mathcal{L}(H)}\le 1$ for all $n\in{\mathbb N}$.
\label{re.yosida_boundedness}
\end{Remark}

The solution to \eqref{eq:equation} will be constructed as a limit of $X_n$ in $\mathcal{C}([0,T],L^p(\Omega,H))$ for an arbitrary but fixed $p<\alpha$. In the first three Lemmata, we establish relative compactness of the Yosida approximation  $\lbrace X_n:n\in\mathbb{N}\rbrace$ in the space $\mathcal{C}([0,T],L^p(\Omega,H))$.

\begin{Lemma} \label{le.tightness}
The set $\lbrace X_n(t):n\in\mathbb{N}\rbrace$ is tight in $H$ for every $t\in [0,T]$.
\end{Lemma}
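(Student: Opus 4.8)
The plan is to use the mild-solution representation
$X_n(t)=S(t)R_n x_0+\int_0^t S(t-s)R_nF(X_n(s))\,{\rm d}s+\int_0^t S(t-s)R_nG(X_n(s-))\,{\rm d}L(s)$
and to exploit the compactness of $S(t)$ for $t>0$ guaranteed by Assumption (A1). The strategy for proving tightness of $\{X_n(t):n\in\mathbb{N}\}$ in $H$ is the standard factorisation/regularisation argument: I would fix $\delta\in(0,1)$ small (to be constrained below) and show that each of the three summands, after being mapped into the fractionally-regularised space $D^\delta$, has uniformly (in $n$) bounded $p$-th moment for some $p<\alpha$; since the embedding $D^\delta\hookrightarrow H$ is compact by Remark \ref{re:compact-fractional}, a uniform bound $\sup_n\E[\abs{X_n(t)}_\delta^p]<\infty$ together with Markov's inequality yields tightness in $H$.

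The key steps, in order, would be as follows. First, for the semigroup term, $S(t)R_nx_0=R_nS(t)x_0$ (Yosida operators commute with $S$), and since $\norm{R_n}_{\mathcal L(D^\delta)}\le 1$ by Remark \ref{re.yosida_boundedness} while $\norm{S(t)}_{\mathcal L(H,D^\delta)}\le c_\delta t^{-\delta}$ by Remark \ref{re:analytic}, and $x_0$ has finite $p$-th moment by (A4), this term is bounded in $L^p(\Omega,D^\delta)$ uniformly in $n$ (for fixed $t>0$; the case $t=0$ is trivial since $X_n(0)=R_nx_0$ is already tight). Second, for the drift term, using $\norm{S(t-s)R_n}_{\mathcal L(H,D^\delta)}\le c_\delta(t-s)^{-\delta}$, the boundedness $\norm{F(X_n(s))}\le K_F$ from (A2), and the integrability of $s\mapsto (t-s)^{-\delta}$ on $[0,t]$ when $\delta<1$, I get a deterministic bound on $\abs{\cdot}_\delta$ uniform in $n$. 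Third, for the stochastic convolution, I apply the moment estimate \eqref{eq:moment_estimate_integral} in the space $D^\delta$: taking $p<\alpha$,
\[
\E\Big[\Big\lvert\int_0^t S(t-s)R_nG(X_n(s-))\,{\rm d}L(s)\Big\rvert_\delta^p\Big]
\le e_{p,\alpha}\Big(\int_0^t \norm{S(t-s)R_nG(X_n(s-))}_{\mathcal L_2(U,D^\delta)}^\alpha\,{\rm d}s\Big)^{p/\alpha},
\]
and then estimate $\norm{S(t-s)R_nG(X_n(s-))}_{\mathcal L_2(U,D^\delta)}\le c_\delta(t-s)^{-\delta}\norm{G(X_n(s-))}_{\mathcal L_2(U,H)}\le c_\delta K_G(t-s)^{-\delta}$ using (A3); the resulting integral $\int_0^t(t-s)^{-\alpha\delta}\,{\rm d}s$ is finite provided $\alpha\delta<1$, i.e.\ $\delta<1/\alpha$, which is compatible with $\delta>0$. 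Combining the three uniform bounds gives $\sup_n\E[\abs{X_n(t)}_\delta^p]=:C_t<\infty$, and then $\P(\abs{X_n(t)}_\delta>R)\le C_t/R^p$ makes $\{X_n(t)\}$ uniformly concentrated on the balls $\{\abs{\cdot}_\delta\le R\}$, which are relatively compact in $H$; hence tightness.

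The main obstacle I anticipate is purely the choice of exponents and making sure the stochastic-convolution estimate closes: one needs $p<\alpha$ for \eqref{eq:moment_estimate_integral} to apply, and simultaneously $\alpha\delta<1$ so that the time-singularity $(t-s)^{-\alpha\delta}$ is integrable — these are jointly satisfiable (take any $\delta\in(0,1/\alpha)$ and any $p\in(0,\alpha)$), but the argument must be organised so that the constant $C_t$ genuinely does not depend on $n$, which is where the uniform operator bounds $\norm{R_n}_{\mathcal L(D^\delta)}\le 1$ and the uniform coefficient bounds in (A2)–(A3) do all the work. A minor subtlety is that the factorisation-style estimate above gives a bound at each fixed $t$ only; since we merely need tightness of $\{X_n(t)\}$ for each fixed $t$, no uniformity in $t$ is required here, so the direct (non-factorised) estimate suffices and I would not need the Fubini theorem or the factorisation kernel at this stage.
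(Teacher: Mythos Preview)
Your proposal is correct and follows essentially the same approach as the paper: both arguments bound $\sup_n\E[\abs{X_n(t)}_\delta^q]$ for some $q<\alpha$ and $\delta\in(0,1/\alpha)$ by splitting the mild-solution formula into the three terms and using the analytic estimate \eqref{eq:analytic}, the uniform bound $\norm{R_n}_{\mathcal L(D^\delta)}\le 1$, the boundedness of $F,G$, and the moment inequality \eqref{eq:moment_estimate_integral}, then conclude tightness via Markov's inequality and the compact embedding $D^\delta\hookrightarrow H$. The only cosmetic difference is that the paper states the result for all $1\le q<\alpha$ and then invokes Prokhorov's theorem explicitly, whereas you fix one $p<\alpha$ and phrase the conclusion directly in terms of relatively compact $D^\delta$-balls.
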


\begin{proof}
The case $t=0$ follows immediately from the strong convergence of $R_n$. For the case $t\in (0,T]$ we first prove that for every $1\leq q<\alpha$, and $0\leq\delta<1/\alpha$ we have
\begin{align}
    \sup_{n\in\mathbb{N}}\E\left[\norm{X_n(t)}_\delta^q\right]<\infty.
    \label{proof:relative_compactness_fixed_t1}
\end{align}
Applying Hölder's inequality and inequality \eqref{eq:moment_estimate_integral} shows for every $n\in\mathbb{N}$ 
that 
\begin{align*}
    \E\left[\norm{X_n(t)}_\delta^q\right]
    &\leq3^{q-1}\left(\E\left[\norm{S(t)R_nx_0}_\delta^q\right]+t^{q-1}\E\left[\int_0^t\norm{S(t-s)R_nF(X_n(s))}_\delta^q\, {\rm d}s\right]\right.\\
    &\qquad \qquad\left.+e_{q, \alpha}\left(\E\left[\int_0^t\norm{S(t-s)R_nG(X_n(s))}_{\mathcal{L}_2(U,D^\delta)}^\alpha \, {\rm d}s\right]\right)^{\frac{q}{\alpha}}\right).
\end{align*}
Commutativity of $S$ and $R_n$, Remark \ref{re:analytic} and Remark \ref{re.yosida_boundedness} verify 
\begin{align*}
    \E\left[\norm{S(t)R_nx_0}_\delta^q\right]\leq c_{\delta}^qt^{-q\delta} \sup_{n\in\mathbb{N}}\norm{R_n}_{\mathcal{L}(D^\delta)}^q\E\left[\norm{x_0}^q\right]<\infty.
\end{align*}
Assumption (A2) on boundedness of $F$ together with Remark \ref{re:analytic} and Remark \ref{re.yosida_boundedness} yield
\begin{align*}
    \E\left[\int_0^t\norm{S(t-s)R_nF(X_n(s))}_\delta^q\, {\rm d}s\right]\leq c_{\delta}^q\frac{t^{1-q\delta}}{1-q\delta}\sup_{n\in\mathbb{N}}\norm{R_n}_{\mathcal{L}(D^\delta)}^qK_F^q<\infty.
\end{align*}
Similarly,  Assumption (A3) on boundedness of G implies 
\begin{align*}
    \left(\E\left[ \int_0^t\norm{S(t-s)R_nG(X_n(s))}_{\mathcal{L}_2(U,D^\gamma)}^\alpha \, {\rm d}s\right]\right)^{\frac{q}{\alpha}}\leq c_{\delta}^q\left(\frac{t^{1-\alpha\delta}}{1-\alpha\delta}\right)^{\frac{q}{\alpha}}\sup_{n\in\mathbb{N}}\norm{R_n}_{\mathcal{L}(D^\delta)}^qK_G^q<\infty.
\end{align*}
Combining the above estimates establishes \eqref{proof:relative_compactness_fixed_t1}, which in turn gives the statement of the Lemma. Indeed, choose any $\delta\in (0,1/\alpha)$ and use Markov's inequality and \eqref{proof:relative_compactness_fixed_t1} for $q=1$  to obtain for each $N>0$ that \[\sup_{n\in\mathbb{N}}P\left(\norm{X_n(t)}_\delta>N\right)\leq\frac{c}{N}\]
for some constant $c\in (0,\infty)$.
Since the embedding $D^\delta\hookrightarrow H$ is compact according to Remark \ref{re:compact-fractional}, we obtain tightness of $\lbrace X_n(t):n\in\mathbb{N}\rbrace$ by Prokhorov's theorem.
\end{proof}

\begin{Lemma}
\label{le.relative_compactness_fixed_t_probability}
The sequence $\lbrace X_n(t):n\in\mathbb{N}\rbrace$ is relatively compact in $L_P^0(\Omega,H)$ for every $t\in [0,T]$.
\end{Lemma}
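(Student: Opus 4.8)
The plan is to strengthen the tightness of $\{X_n(t):n\in\N\}$ in $H$ obtained in Lemma~\ref{le.tightness} to relative compactness in the finer topology of convergence in probability. The subtle point is that tightness of laws only produces subsequences converging in distribution, whereas relative compactness in $L^0_P(\Omega,H)$ requires subsequences converging \emph{in probability} on the fixed probability space; this forces us to exploit that every $X_n$ is built from the single cylindrical process $L$ and the single initial datum $x_0$.

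First I would record the extra regularity already at hand: from the proof of Lemma~\ref{le.tightness}, fixing $q\in(1,\alpha)$ and $\delta\in(0,1/\alpha)$, one has $\sup_{n}\E[\norm{X_n(t)}_\delta^q]<\infty$, and by Remark~\ref{re:compact-fractional} the embedding $D^\delta\hookrightarrow H$ is compact. Next I would rewrite the mild formula for $X_n$ by splitting $R_n=I+(R_n-I)$ inside each term. Using the analytic estimate $\norm{S(r)}_{\mathcal L(H,D^\delta)}\le c_\delta r^{-\delta}$ from Remark~\ref{re:analytic}, the standard bound $\norm{(R_n-I)h}\le Cn^{-\delta}\norm{h}_\delta$ for $h\in D^\delta$, boundedness of $F$ and $G$ from (A2)--(A3), and the moment estimate~\eqref{eq:moment_estimate_integral}, every correction term is dominated either deterministically by $Cn^{-\delta}\int_0^t(t-s)^{-\delta}\,\d s$ or, in $L^p(\Omega,H)$ with $p<\alpha$, by $C\bigl(n^{-\alpha\delta}\int_0^t(t-s)^{-\alpha\delta}\,\d s\bigr)^{1/\alpha}$, both finite (since $\delta<1/\alpha$) and vanishing as $n\to\infty$; together with $S(t)R_nx_0\to S(t)x_0$ a.s.\ this gives
\[
X_n(t)=S(t)x_0+\int_0^t S(t-s)F(X_n(s))\,\d s+\int_0^t S(t-s)G(X_n(s-))\,\d L(s)+\rho_n(t),
\]
with $\rho_n(t)\to 0$ in $L^p(\Omega,H)$, hence in probability. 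Thus $\{X_n(t)\}$ is an $L^0_P$-perturbation of $\{\Psi(X_n)(t)\}$, where $\Psi$ denotes the mild-solution operator of~\eqref{eq:equation}, and since $\{\rho_n(t)\}\cup\{0\}$ is compact in $L^0_P(\Omega,H)$ and addition is continuous, it suffices to prove relative compactness of $\{\Psi(X_n)(t):n\in\N\}$.

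For this last step I would take an arbitrary subsequence and use the $D^\delta$-moment bound together with the compact embedding $D^\delta\hookrightarrow H$ and Prokhorov's theorem to extract a further subsequence along which $\Psi(X_n)(t)$ converges in distribution; the work is then to upgrade this to convergence in probability. Because $\Psi$ acts only through the fixed integrator $L$ and the fixed $x_0$, I would identify the limit on $(\Omega,\Sigma,P)$ by a Jakubowski/Skorokhod-type representation for the approximations jointly with the relevant functionals of $L$, combined with a monotone-class argument and the dominated convergence theorem for stochastic integrals from~\cite{BR} and the stochastic Fubini theorem~\ref{thm.fubini} to pass to the limit inside the convolutions (equivalently, one can invoke a Gyöngy--Krylov-type criterion applied to pairs of approximations).

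I expect this upgrade --- from convergence in distribution of the marginals to convergence in probability --- to be the main obstacle. Tightness and the compact embedding $D^\delta\hookrightarrow H$ only deliver relative compactness of the laws; promoting it to a subsequence convergent in probability genuinely requires the common noise $L$ and the consistency of the Yosida scheme, and one cannot fall back on the usual contraction/Gr\"onwall closure, since the stochastic integral maps $L^\alpha$-integrands in time to processes possessing only moments of order $p<\alpha$, so the $X_n$ cannot be shown to be Cauchy by direct estimates.
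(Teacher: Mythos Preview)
Your reduction $X_n(t)=\Psi(X_n)(t)+\rho_n(t)$ with $\rho_n(t)\to 0$ in $L^p(\Omega,H)$ is correct, but the step you flag as the main obstacle --- upgrading subsequential convergence in distribution of $\{\Psi(X_n)(t)\}$ to convergence in probability --- is a genuine gap that the tools you list do not close. A Gy\"ongy--Krylov argument requires every joint weak limit of pairs $(\Psi(X_{n_k})(t),\Psi(X_{m_k})(t))$ to be supported on the diagonal; this is normally verified by identifying each marginal as a solution of a limit equation and invoking \emph{uniqueness} for that equation. No uniqueness for mild solutions of \eqref{eq:equation} is available here (only existence is claimed, and this lemma is a step towards it). Moreover, $\Psi(X_n)(t)$ depends on the whole path $(X_n(s))_{s\le t}$, so passing to the limit inside $\Psi$ via the dominated convergence theorem for stochastic integrals from \cite{BR} already presupposes convergence in probability of $X_n(s)$ for a.e.\ $s$, which is precisely what is at stake. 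A Skorokhod representation does not help either: it yields copies converging a.s.\ on a \emph{different} probability space, decoupled from the fixed cylindrical driver $L$, so the resulting stochastic convolutions are no longer the original ones.

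The paper takes a completely different and direct route: it shows that $\{X_n(t)\}$ is \emph{Cauchy} in probability. One estimates $\E[\norm{X_m(t)-X_n(t)}^p]$ for $1<p<\alpha$ using the H\"older bounds \eqref{ass:coef_F2}--\eqref{ass:coef_G} with exponent $\beta=p/\alpha$, obtaining a closed inequality $u_{n,m,p}(t)\le u_{n,m,p}^0+w_p\int_0^t u_{n,m,p}(s)^{p/\alpha}\,\d s$ for $u_{n,m,p}(t):=(\E[\norm{X_m(t)-X_n(t)}^p])^{\alpha/p}$. The nonlinear Gronwall inequality of Willett--Wong then gives $u_{n,m,p}(t)\le 2^{\alpha/(\alpha-p)}u_{n,m,p}^0+\bigl(2\tfrac{\alpha-p}{\alpha}tw_p\bigr)^{\alpha/(\alpha-p)}$. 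The second summand is independent of $m,n$ and, under a smallness assumption on $T$, tends to $0$ as $p\nearrow\alpha$ (since $e_{p,\alpha}\sim(\alpha-p)^{-1}$); the first summand tends to $0$ as $m,n\to\infty$ for each fixed $p$, by the tightness of Lemma~\ref{le.tightness} together with the appendix Lemma~\ref{le.uniform_convergence}. One first chooses $p$ close to $\alpha$, then lets $m,n\to\infty$, and finally iterates on subintervals to remove the smallness restriction on $T$. This two-parameter limit is exactly the device that replaces the contraction/uniqueness input your scheme lacks.
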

\begin{proof}
    First, we impose the additional assumption that
    \begin{align}
    T12^2e_{2,\alpha}\left(K_F^\alpha+K_G^\alpha\right)<1,
        \label{proof:convergence_prob1}
    \end{align}
    where $K_F, K_G$ come from \eqref{ass:coef_F2}, \eqref{ass:coef_G} and $e_{2,\alpha}$ is defined just below \eqref{eq:moment_estimate_integral}. For $1<p<\alpha$ and $m,n\in\mathbb{N}$ we estimate the $p$-th moment of the difference $X_m(t)-X_n(t)$ by
    \begin{equation}
	\label{eq_estimate_1_2_3}
	\begin{split}
		\E\left[\norm{X_m(t)-X_n(t)}^p\right]
		\leq&6^{p-1}\Bigg(\E\left[\norm{S(t)(R_m-R_n)x_0}^p\right]\\
		&+\E\left[\norm{\int_0^tS(t-s)(R_m-R_n)F(X_m(s)){\rm d}s}^p\right]\\
		&+\E\left[\norm{\int_0^tS(t-s)R_n(F(X_m(s))-F(X_n(s))){\rm d}s}^p\right]\\
		&+\E\left[\norm{\int_0^tS(t-s)(R_m-R_n)G(X_m(s-)){\rm d}L(s)}^p\right]\\
		&+\E\left[\norm{\int_0^tS(t-s)R_n(G(X_m(s-))-G(X_n(s-))){\rm d}L(s)}^p\right]\Bigg).
	\end{split}
\end{equation}
Applying Hölder's inequality, Jensen's inequality, inequality \eqref{eq:moment_estimate_integral}, Remark \ref{re.yosida_boundedness}, (A1) and estimates \eqref{ass:coef_F2} and \eqref{ass:coef_G} we obtain
\begin{align*}
    &\E\left[\norm{\int_0^tS(t-s)R_n(F(X_m(s))-F(X_n(s))){\rm d}s}^p\right.\left.\vphantom{\int_0^t}\right]\\
    &\qquad\leq T^{p-\frac{p}{\alpha}}\left(\E\left[\int_0^t\norm{S(t-s)R_n(F(X_m(s))-F(X_n(s)))}^\alpha{\rm d}s\right]\right)^{\frac{p}{\alpha}}\\
    &\qquad\leq T^{p-\frac{p}{\alpha}}K_F^p\left(\E\left[\int_0^t\norm{X_m(s)-X_n(s)}^p{\rm d}s\right]\right)^{\frac{p}{\alpha}},
\end{align*}
and, similarly, 
\begin{align*}
    &\E\left[\norm{\int_0^tS(t-s)R_n(G(X_m(s-))-G(X_n(s-))){\rm d}L(s)}^p\right.\left.\vphantom{\int_0^t}\right]\\
    &\qquad\leq e_{p,\alpha}K_G^p\left(\E\left[\int_0^t\norm{X_m(s)-X_n(s)}^p{\rm d}s\right]\right)^{\frac{p}{\alpha}}.
\end{align*}
Raising both sides of \eqref{eq_estimate_1_2_3} to the power of $\alpha/p$ shows for each $t\in [0,T]$ that 
\begin{align}\label{eq.Gronwall-pre}
	u_{n,m,p}(t)\leq u_{n,m,p}^0+w_{p}\int_0^t\left(u_{n,m,p}(s)\right)^{\frac{p}{\alpha}}{\rm d}s,
\end{align}
where we use 
\begin{align*}
    u_{n,m,p}(t)&:=\left(\E\left[\norm{X_m(t)-X_n(t)}^p\right]\right)^{\frac{\alpha}{p}},\\
    u_{n,m,p}^0&:=5^{\frac{\alpha}{p}-1}6^{\frac{\alpha}{p}(p-1)}\Bigg(\sup_{t\in [0,T]}\norm{S(t)}_{\mathcal{L}(H)}^\alpha\left(\E\left[\norm{(R_m-R_n)x_0}^p\right]\right)^{\frac{\alpha}{p}}\\
    &\hphantom{hhhhhhhhhhhhhh}+\left(\sup_{t\in [0,T]}\E\left[\norm{\int_0^tS(t-s)(R_m-R_n)F(X_m(s)){\rm d}s}^p\right]\right)^{\frac{\alpha}{p}}\\
    &\hphantom{hhhhhhhhhhhhhh}+\left(\sup_{t\in [0,T]}\E\left[\norm{\int_0^tS(t-s)(R_m-R_n)G(X_m(s-)){\rm d}L(s)}^p\right]\right)^{\frac{\alpha}{p}}\Bigg),\\
    w_{p}&:=5^{\frac{\alpha}{p}-1}6^{\frac{\alpha}{p}(p-1)}\left(T^{\alpha-1}K_F^\alpha+e_{p,\alpha}^{\frac{\alpha}{p}}K_G^\alpha\right).
\end{align*}
It follows from \eqref{eq.Gronwall-pre} by a version of Gronwall's inequality  \cite[Th. 2]{willett1965discrete} that
\begin{align}
    u_{n,m,p}(t)& 
     \leq 2^{\frac{\alpha}{\alpha-p}}u_{n,m,p}^0+\left(2\frac{\alpha-p}{\alpha}tw_{p}\right)^{\frac{\alpha}{\alpha-p}}.
    \label{eq_estimate_1_2_3_c}
\end{align}
If we show for each $t\in [0,T]$  that
\begin{align}
\lim_{p\to\alpha-}\left(2\frac{\alpha-p}{\alpha}tw_p\right)^{\frac{\alpha}{\alpha-p}}=0,
    \label{eq_estimate_1_2_3_d}
\end{align}
and, for any $1<p<\alpha$, 
\begin{align}
\lim_{m,n\to\infty}u_{n,m,p}^0=0,
    \label{eq_estimate_1_2_3_e}
\end{align}
then, for each $\epsilon\in(0,1)$, we can find some $p^*\in(1,\alpha)$ such that
$
    2^{\frac{\alpha}{\alpha-p^*}}\left(\frac{\alpha-p^*}{\alpha}tw_{p^*}\right)^{\frac{\alpha}{\alpha-p^*}}<\frac{\epsilon^{\alpha+1}}{2},
$
and some $N\in\mathbb{N}$ such that for all $m,n\geq N$
$
    2^{\frac{\alpha}{\alpha-p^*}}u_{n,m,p^*}^0\leq\frac{\epsilon^{\alpha+1}}{2}.
$
This completes the proof since we obtain for any $m,n\geq N$ that
\begin{align*}
    P\left(\norm{X_{m}(t)-X_{n}(t)}\geq\epsilon\right)\leq \frac{1}{\epsilon^{p^*}}\E\left[\norm{X_{m}(t)-X_{n}(t)}^{p^*}\right]
    \leq\epsilon^{\frac{p^*}{\alpha}}. 
\end{align*}
It remains to establish \eqref{eq_estimate_1_2_3_d} and \eqref{eq_estimate_1_2_3_e}, which we first do under the additional assumption \eqref{proof:convergence_prob1}:

\textit{Argument for \eqref{eq_estimate_1_2_3_d}:} Since  $e_{p,\alpha}=\frac{\alpha}{\alpha-p}e_{2,\alpha}^{p/\alpha}$ for a constant $e_{2,\alpha}\in (0,\infty)$ independent of $\alpha$, we obtain $T^{\alpha-1}\leq e_{p,\alpha}^{\frac{\alpha}{p}}$ for $p$ sufficiently close to $\alpha$. In this case, we obtain 
\begin{align*}
    \left(2\frac{\alpha-p}{p}tw_p\right)^{\frac{\alpha}{\alpha-p}}
    &\leq 5^{\frac{\alpha}{p}}\left(2\frac{\alpha-p}{\alpha}t6^2\left(\frac{\alpha-p}{\alpha}\right)^{-\frac{\alpha}{p}}e_{2,\alpha}\left(K_F^\alpha+K_G^\alpha\right)\right)^{\frac{\alpha}{\alpha-p}}\\
    &\leq 5^\alpha\left(\frac{\alpha-p}{\alpha}\right)^{-2}\left(T12^2e_{2,\alpha}\left(K_F^\alpha+K_G^\alpha\right)\right)^{\frac{\alpha}{\alpha-p}}.
\end{align*}
Since we assume  \eqref{proof:convergence_prob1}, applying L'Hospital's rule verifies \eqref{eq_estimate_1_2_3_d}.

\textit{Argument for \eqref{eq_estimate_1_2_3_e}:}
Let $p \in(1,\alpha)$ be fixed. By strong convergence of $R_n$ and Lebesgue's dominated convergence theorem we have
\begin{align}
    \lim_{m,n\to\infty}\E\left[\norm{(R_m-R_n)x_0}^p\right]=0.
    \label{proof:convergence_prob1a}
\end{align}
Hölder's inequality, strong convergence of $R_n$, Lemma \ref{le.uniform_convergence} and Lebesgue's dominated convergence theorem we have
\begin{equation}
\begin{split}
    \lim_{m,n\to\infty}&\left(\sup_{t\in [0,T]}\E\left[\norm{\int_0^tS(t-s)(R_m-R_n)F(X_m(s)){\rm d}s}^p\right]\right)\\
    \leq&T^{p-1}\sup_{t\in [0,T]}\norm{S(t)}_{\mathcal{L}(H)}^p\lim_{m,n\to\infty}\left(\E\left[\int_0^T\norm{(R_m-R_n)F(X_m(s))}^p{\rm d}s\right]\right)=0,
\end{split}
    \label{proof:convergence_prob1b}
\end{equation}
where the assumptions of Lemma \ref{le.uniform_convergence} are satisfied by boundedness of $F$, see \eqref{ass:coef_F}, and tightness of $\lbrace F(X_n(s)), m\in\mathbb{N}\rbrace$ implied by Lemma \ref{le.tightness} and continuity of $F$. Similarly, using inequality \eqref{eq:moment_estimate_integral} and \eqref{ass:coef_G}, we obtain
\begin{equation}
\begin{split}
    \lim_{m,n\to\infty}&\left(\sup_{t\in [0,T]}\E\left[\norm{\int_0^tS(t-s)(R_m-R_n)G(X_m(s-)){\rm d}L(s)}^p\right]\right)\\
    \leq&e_{p,\alpha}\sup_{t\in [0,T]}\norm{S(t)}_{\mathcal{L}(H)}^p\lim_{m,n\to\infty}\left(\E\left[\int_0^T\norm{(R_m-R_n)G(X_m(s))}^\alpha{\rm d}s\right]\right)^{\frac{p}{\alpha}}=0,
\end{split}
    \label{proof:convergence_prob1c}
\end{equation}
which establishes  \eqref{eq_estimate_1_2_3_e}. 

Thus, we have proved the lemma under the additional assumption \eqref{proof:convergence_prob1}. For establishing the general case, we fix a time $T_0 \le (12^2e_{2,\alpha}\left(K_F^\alpha+K_G^\alpha\right))^{-1}$. 
If $t\in [0,T_0]$, relative compactness in $L_P^0(\Omega,H)$ follows from the previous arguments. If $t\in (T_0, 2T_0]$ we write
\begin{align*}
    X_m(t)-X_n(t)
    &=S\left(t-T_0\right)\left(X_m\left(T_0\right)-X_n\left(T_0\right)\right)\\
    &\qquad+ \int_{T_0}^t S(t-s)(R_mF(X_m(s)-R_nF(X_n(s)))\, {\rm d}s\\
    &\qquad+\int_{T_0}^t S(t-s)(R_mG(X_m(s-)-R_nG(X_n(s-)))\, {\rm d}L(s).
\end{align*}
Since our choice of $T_0$ implies  that $\lbrace S(t-T_0)X_n(T_0),\, n\in\mathbb{N}\rbrace$ is relatively compact in $L_P^0(\Omega,H)$, and that  $\left(t-T_0\right)12^2e_{2,\alpha}\left(K_F^\alpha+K_G^\alpha\right)<1$, we can use the same argument as before to obtain relative compactness of $(X_n(t))_{n \in \mathbbm{N}}$ in $L_P^0(\Omega,H)$ for each $t \in (T_0, 2T_0]$. Using a standard induction argument, we can now cover intervals of arbitrary length. This concludes the proof of the general case.
\end{proof}

We now step from relative compactness of $\lbrace X_n(t): n\in\mathbb{N}\rbrace$ in $L_P^0(\Omega,H)$ for fixed time $t$ to relative compactness of the processes $\lbrace X_n: n\in\mathbb{N}\rbrace$ using the Arzelà–Ascoli Theorem.

\begin{Lemma}
The collection $\{X_n: n\in\mathbb{N}\}$ is relatively compact in $\mathcal{C}([0,T],L^p(\Omega,H))$ for any $0<p<\alpha$.
\label{le.relative_compactness}
\end{Lemma}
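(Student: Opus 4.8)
The plan is to apply the Arzel\`a--Ascoli theorem in the complete metric space $\mathcal{C}([0,T],L^p(\Omega,H))$: a family is relatively compact there if and only if it is equicontinuous and, for each fixed $t\in [0,T]$, the set $\{X_n(t):n\in\mathbb{N}\}$ is relatively compact in $L^p(\Omega,H)$. I would verify these two conditions separately. Note that the equicontinuity estimate below, specialised to a single fixed $n$, in particular shows that $t\mapsto X_n(t)$ is continuous into $L^p(\Omega,H)$, so that each $X_n$ genuinely belongs to $\mathcal{C}([0,T],L^p(\Omega,H))$ and the theorem applies.

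For the pointwise condition, I would start from Lemma \ref{le.relative_compactness_fixed_t_probability}, which gives relative compactness of $\{X_n(t):n\in\mathbb{N}\}$ in $L_P^0(\Omega,H)$, and upgrade it to $L^p(\Omega,H)$. For this it is enough that $\{\norm{X_n(t)}^p:n\in\mathbb{N}\}$ be uniformly integrable, which follows from the bound $\sup_{n\in\mathbb{N}}\E[\norm{X_n(t)}^q]<\infty$ for some $q\in(p,\alpha)$ --- precisely estimate \eqref{proof:relative_compactness_fixed_t1} with $\delta=0$, which is valid for every $q<\alpha$. Relative compactness in probability together with uniform integrability of the $p$-th powers then yields relative compactness in $L^p(\Omega,H)$ by passing to subsequences convergent in probability and invoking Vitali's convergence theorem.

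For the equicontinuity, I would use the mild formulation and the semigroup law to write, for $0\le s\le t\le T$,
\begin{align*}
X_n(t)-X_n(s)=\big(S(t-s)-I\big)X_n(s)+\int_s^tS(t-r)R_nF(X_n(r))\,\d r+\int_s^tS(t-r)R_nG(X_n(r-))\,\d L(r),
\end{align*}
and estimate the three terms in $L^p(\Omega,H)$ uniformly in $n$. The last two are routine: using that $S$ is a contraction, $\norm{R_n}_{\mathcal{L}(H)}\le 1$ (Remark \ref{re.yosida_boundedness}), boundedness of $F$ and $G$, and the moment inequality \eqref{eq:moment_estimate_integral}, their $p$-th moments are dominated by constant multiples of $(t-s)^p$ and $(t-s)^{p/\alpha}$, respectively. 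For the first term I would split $X_n(s)=S(s)R_nx_0+Y_n(s)$, where $Y_n(s)$ collects the two convolution integrals from $0$ to $s$. The contribution $(S(t-s)-I)S(s)R_nx_0=S(s)(S(t-s)-I)R_nx_0$ is dominated in norm by $\norm{(S(t-s)-I)R_nx_0}$; since $R_nx_0\to x_0$ in $L^p(\Omega,H)$, the set $\{R_nx_0:n\in\mathbb{N}\}\cup\{x_0\}$ is compact in $L^p(\Omega,H)$, and since the maps $S(h)-I$ are uniformly bounded on $L^p(\Omega,H)$ and tend to $0$ pointwise as $h\downarrow 0$ by dominated convergence, they tend to $0$ uniformly on that compact set. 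For the contribution $(S(t-s)-I)Y_n(s)$ I would fix $\delta\in(0,1/\alpha)$, note that the estimates in the proof of Lemma \ref{le.tightness}, applied now without the initial-datum term and hence uniformly in $s\in[0,T]$, give $\sup_{n\in\mathbb{N},\,s\in[0,T]}\E[\norm{Y_n(s)}_\delta^p]<\infty$, and combine this with the bound $\norm{(S(h)-I)(-A)^{-\delta}}_{\mathcal{L}(H)}\le c_\delta h^\delta$, a standard consequence of Remark \ref{re:analytic}, to get $\E[\norm{(S(t-s)-I)Y_n(s)}^p]\le c_\delta^p(t-s)^{p\delta}\sup_{n,s}\E[\norm{Y_n(s)}_\delta^p]$. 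All four bounds tend to $0$ uniformly in $n$ as $t-s\to 0$, which is the claimed equicontinuity.

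The main obstacle is the term $(S(t-s)-I)X_n(s)$: because $S(t-s)-I$ does not converge to $0$ in operator norm, it cannot be estimated simply by $\norm{S(t-s)-I}_{\mathcal{L}(H)}\norm{X_n(s)}$. The two devices above are exactly what is needed --- the spatial smoothing $Y_n(s)\in D^\delta$ gained from the convolutions, which is crucially uniform in $s\in[0,T]$ (unlike the bound for $S(s)R_nx_0$, which degenerates as $s\downarrow 0$), handles the convolution part, while the $L^p$-compactness of the orbit $\{R_nx_0:n\in\mathbb{N}\}\cup\{x_0\}$ handles the part carried by the initial datum; together they deliver smallness uniform in both $n$ and $s$.
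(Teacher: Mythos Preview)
Your proposal is correct and follows the same Arzel\`a--Ascoli skeleton as the paper; the pointwise $L^p$-compactness argument (upgrading Lemma~\ref{le.relative_compactness_fixed_t_probability} via the higher-moment bound \eqref{proof:relative_compactness_fixed_t1} and uniform integrability) is essentially identical to what the paper does by citing \cite[Cor.~3.3]{DM} and the de la Vall\'ee--Poussin theorem.

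The equicontinuity argument, however, differs. The paper expands $X_n(t+h)-X_n(t)$ into five pieces, keeping the convolution integrals on $[0,t]$ separate and estimating the terms $\int_0^t(S(h)-I)S(t-s)R_nF(X_n(s))\,\d s$ and the analogous stochastic integral by invoking the tightness of $\{X_n(s):n\}$ (Lemma~\ref{le.tightness}) together with the appendix Lemma~\ref{le.uniform_convergence}, which converts strong convergence $S(h)-I\to 0$ into uniform smallness over a tight family. Your route instead lumps these into $(S(t-s)-I)Y_n(s)$ and exploits the quantitative smoothing bound $\norm{(S(h)-I)(-A)^{-\delta}}_{\mathcal{L}(H)}\le c\,h^\delta$ together with the uniform $D^\delta$-moment bound on $Y_n(s)$ extracted from the proof of Lemma~\ref{le.tightness}. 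Your approach is more explicit---it yields an actual H\"older modulus $(t-s)^{p\delta}$ for that piece---and avoids the soft compactness argument of Lemma~\ref{le.uniform_convergence}; the paper's approach is less quantitative but does not require isolating $Y_n(s)$ or invoking the fractional estimate. Both ultimately rest on the analytic smoothing in (A1). For the initial-datum piece your compactness-of-$\{R_nx_0\}$ argument works, though note that one can simplify slightly by also commuting $R_n$ past $S(t-s)-I$ to get the $n$-independent bound $\norm{(S(t-s)-I)x_0}$, which is what the paper does in \eqref{eq.right-cont-first-term}.
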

\begin{proof}
We consider the case $1<p<\alpha$ as the case $p\leq 1$ follows from the fact that relative compactness in $\mathcal{C}([0,T],L^{p}(\Omega,H))$ implies relative compactness in $\mathcal{C}([0,T],L^{p'}(\Omega,H))$ for $p>p'$. In light of the Arzelà–Ascoli Theorem, cf.\ e.g.\ \cite[Th.\ 7.17]{kelley}), it suffices to show that
\begin{enumerate}
    \item [\rm (a)] $\{X_n(t): n\in\mathbb{N}\}\subset L^p(\Omega,H)$ is relatively compact for each $t\in [0,T]$;
    \item [\rm (b)] $\{X_n: n\in\mathbb{N}\}\subset \mathcal{C}([0,T],L^p(\Omega,H))$ is equicontinuous.
\end{enumerate}
The claim in (a) follows from \cite[Cor. 3.3]{DM} by Lemmata \ref{le.tightness}, \ref{le.relative_compactness_fixed_t_probability} and the fact that Equation \eqref{proof:relative_compactness_fixed_t1} with $\delta=0$ and any $q \in (p,\alpha)$ implies via the Vallee-Poussin Theorem \cite[Th.\ II.22]{dellacherie_meyer_1978} that the collection $\{X_n(t): n\in\mathbb{N}\}$ is $p$-uniformly integrable and bounded in $L^p(\Omega,H)$. Hence, it remains only to prove (b).  To that end, we take $t \in [0,T]$ and $h \in (0,T-t]$, and estimate
\begin{align}\label{eq.right-cont-5terms}
& \norm{X_n(t+h)-X_n(t)}^p \nonumber\\
&\leq 5^{p-1}\Bigg(\norm{\left(S(h)-I\right)S(t)R_nx_0}^p +\norm{\int_t^{t+h}S(t+h-s)R_nF(X_n(s))\, {\rm d}s}^p\nonumber\\
& \quad+\norm{\int_t^{t+h}S(t+h-s)R_nG(X_n(s-))\, {\rm d}L(s)}^p+\norm{\int_0^t\left(S(h)-I\right)S(t-s)R_nF(X_n(s))\,{\rm d}s}^p\notag\\
&\quad +\norm{\int_0^t\left(S(h)-I\right)S(t-s)R_nG(X_n(s-))\,{\rm d}L(s)}^p\Bigg).
\end{align}
Commutativity of $R_n$ and $S$ and contractivity of $S$ implies 
\begin{equation}\label{eq.right-cont-first-term}
 E\left[\norm{\left(S(h)-I\right)S(t)R_nx_0}^p\right]
  \le \sup_{n\in\mathbb{N}}\norm{R_n}_{\mathcal{L}(H)}^p
  \E\left[\norm{\left(S(h)-I\right)x_0}^p\right].
\end{equation}
Applying  Hölder's inequality, boundedness of $F$ in Assumption (A2) and contractivity of $S$ we get
\begin{equation}
    \E\left[\norm{\int_t^{t+h}S(t+h-s)R_nF(X_n(s))\, {\rm d}s}^p\right]
    \le h^p \sup_{n\in\mathbb{N}}\norm{R_n}_{\mathcal{L}(H)}^pK_F^p. 
\end{equation}
We conclude from Inequality \eqref{eq:moment_estimate_integral} by using boundedness of $G$ in Assumption (A3) and contractivity of $S$ that
\begin{align}
&\E\left[\norm{\int_t^{t+h}S(t+h-s)R_nG(X_n(s-))\,{\rm d}L(s)}^p\right] \\
&\qquad \le e_{p,\alpha}\left(\E\left[\int_t^{t+h}\norm{S(t+h-s)G(X_n(s))}_{\mathcal{L}_2(U,H)}^\alpha\, {\rm d}s\right]\right)^{p/\alpha}
\le e_{p,\alpha}\sup_{n\in\mathbb{N}}\norm{R_n}_{\mathcal{L}(H)}^pK_G^ph^{p/\alpha}. \notag
\end{align}
It follows from Lemma \ref{le.tightness} that $\{X_n(s):\, n\in\N\}$ is tight in $H$ for every $s\in [0,t]$.  Lemma \ref{le.uniform_convergence} implies 
\begin{align*}
\lim_{h\searrow 0}  \sup_{n\in\mathbb{N}}\E\left[\norm{\left(S(h)-I\right)S(t-s)R_nF(X_n(s))}^p\right]=0.
\end{align*}
Lebesgue's dominated convergence theorem shows
\begin{align}
&\lim_{h\searrow 0}\int_0^t \sup_{n\in\N}\E\left[\norm{\left(S(h)-I\right)S(t-s)R_nF(X_n(s))}^p\right]\,{\rm d}s=0.
\end{align}
In the same way, after applying Inequality \eqref{eq:moment_estimate_integral}, we obtain
 from Lemma  \ref{le.uniform_convergence}  
\begin{equation}\label{eq.right-cont-last-term}
 \lim_{h\searrow 0}\sup_{n\in\mathbb{N}}\E\left[\norm{\int_0^t\left(S(h)-I\right)S(t-s)R_nG(X_n(s-))\,{\rm d}L(s)}^p\right] = 0.
\end{equation}
Applying \eqref{eq.right-cont-first-term} - \eqref{eq.right-cont-last-term} to Inequality \eqref{eq.right-cont-5terms} shows uniform continuity from the right. 
Similar arguments establish uniform continuity from the left, which 
proves (b), and thus completes the proof.

\end{proof}

\begin{proof}[Proof of Theorem \ref{thm:existence_mild}]
It is enough to consider the case $p\geq\alpha\beta$ where $\beta$ is the Hölder exponent from \eqref{ass:coef_G}. Lemma \ref{le.relative_compactness} guarantees that there is a  subsequence $(n_k)_{k=1}^\infty$ such that
\begin{align}
    \lim_{k \rightarrow \infty}\sup_{t\in [0,T]}\E\left[\abs{X_{n_k}(t)-Z(t)}^p\right]= 0
    \label{proof:rel_comp_contra}
\end{align}
for some $Z\in\mathcal{C}([0,T], L^p(\Omega, H))$.
The proof will be complete if we show that $Z$ is a mild solution to \eqref{eq:equation}. We conclude for each $k\in\mathbb{N}$ and $t\in [0,T]$ from Lipschitz continuity of $F$ and Hölder continuity of $G$ in  \eqref{ass:coef_F} and \eqref{ass:coef_G} and contractivity of $S$ by applying Hölder's inequality and Inequality 
\eqref{eq:moment_estimate_integral} that
\begin{align*}
    \E&\left[\abs{Z(t)-S(t)x_0-\int_0^tS(t-s)F(Z(s))\,{\rm d}s-\int_0^tS(t-s)G(Z(s-))\,{\rm d}L(s)}^p\right]\\
    &\leq(1\wedge 3^{p-1})\Bigg(\E\left[\abs{Z(t)-X_{n_k}(t)}^p\right]+\E\left[\abs{\int_0^tS(t-s)\big(F(Z(s))-F(X_{n_k}(s))\big)\,{\rm d}s}^p\right]\\
    &\hphantom{PHANTOMMM}+\E\left[\abs{\int_0^tS(t-s)\big(G(Z(s-))-G(X_{n_k}(s-))\big)\,{\rm d}L(s)}^p\right]\Bigg)\\
    &\leq (1\wedge 3^{p-1}) \Bigg(\E\left[\abs{Z(t)-X_{n_k}(t)}^p\right]+T^{p-1}\E\left[\int_0^t\abs{S(t-s)\big(F(Z(s))-F(X_{n_k}(s))\big)}^p\,{\rm d}s\right]\\
    &\hphantom{PHANTOMMM}+\left.e_{p,\alpha}\left(\E\left[\int_0^t\abs{S(t-s)\big(G(Z(s))-G(X_{n_k}(s))\big)}_{\mathcal{L}_2(U,H)}^\alpha\,{\rm d}s\right]\right)^{p/\alpha}\right)\\
    &\leq (1\wedge 3^{p-1})\Bigg(\E\left[\norm{Z(t)-X_{n_k}(t)}^p\right]+T^{p-1}K_F^p\sup_{t\in [0,T]}\norm{S(t)}_{\mathcal{L}(H)}^p\E\left[\int_0^t\norm{Z(s)-X_{n_k}(s)}^p\,{\rm d}s\right]\\
    &\hphantom{PHANTOMMM}+\left.e_{p,\alpha} K_G^p\sup_{t\in [0,T]}\norm{S(t)}_{\mathcal{L}(H)}^p\left(\E\left[\int_0^t\norm{Z(s)-X_{n_k}(s)}_{\mathcal{L}_2(U,H)}^{\alpha\beta}\,{\rm d}s\right]\right)^{p/\alpha}\right)\\
    &\leq(1\wedge 3^{p-1})\left(\left(1+T^pK_F^p\right)\sup_{t\in [0,T]}\E\left[\norm{Z(t)-X_{n_k}(t)}^p\right]\right.\\
    &\hphantom{PHANTOMMM}+\left.e_{p,\alpha}K_G^pT^{p/\alpha}\sup_{t\in [0,T]}\left(\E\left[\norm{Z(t)-X_{n_k}(t)}^p\right]\right)^\beta\right)
\end{align*}
As the last line tends to $0$ as $k\to\infty$ by \eqref{proof:rel_comp_contra}, it follows that $Z$ is  a mild solution to \eqref{eq:equation}.

It remains to establish that $Z$ has c\`adl\`ag paths, but this follows immediately from the following corollary as $X_n$ has  c\`adl\`ag paths.
\end{proof}

At the end of this section, we present a stronger convergence result for Yosida approximations that not only completes the proof of Theorem \ref{thm:existence_mild} but also turns out to be useful in applications as will be seen in the following sections.

\begin{Corollary}\label{le.yosida_convergence_uniform}
For all $0<p<\alpha$ there exists a subsequence $(X_{n_k})_{k \in \mathbb{N}}$ of the Yosida approximations, which converges to a solution to \eqref{eq:equation} both in $\mathcal{C}([0,T],L^p(\Omega,H))$ and uniformly on $[0,T]$ almost surely.
\end{Corollary}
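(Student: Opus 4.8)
The plan is to upgrade the convergence furnished by Lemma~\ref{le.relative_compactness} and the proof of Theorem~\ref{thm:existence_mild} from convergence in $\mathcal{C}([0,T],L^p(\Omega,H))$ to almost sure uniform convergence along a further subsequence, by passing from $\sup_t\E[\,\cdot\,]$ to $\E[\sup_t\,\cdot\,]$. We may assume $p\ge1$, since for $p<1$ both conclusions follow from the case $p=1$: convergence in $\mathcal{C}([0,T],L^1(\Omega,H))$ implies convergence in $\mathcal{C}([0,T],L^p(\Omega,H))$, and almost sure uniform convergence does not depend on $p$. Fix a Hölder exponent $\beta\in(0,1)$ with $\alpha\beta\le1$; then \eqref{ass:coef_F2}--\eqref{ass:coef_G} hold with this $\beta$ and $p\ge\alpha\beta$, so the proof of Theorem~\ref{thm:existence_mild} applies and produces a subsequence, still written $(X_{n_k})$, together with a mild solution $Z$ such that $X_{n_k}\to Z$ in $\mathcal{C}([0,T],L^p(\Omega,H))$. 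The key step is to show that $\lim_{k,l\to\infty}\E\big[\sup_{t\in[0,T]}\norm{X_{n_k}(t)-X_{n_l}(t)}^p\big]=0$.

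I would start from the mild representation of $X_{n_k}(t)-X_{n_l}(t)$, which splits into the term $S(t)(R_{n_k}-R_{n_l})x_0$, a Bochner-convolution term involving $F$, and a stochastic-convolution term involving $G$. The first contributes at most $\E[\norm{(R_{n_k}-R_{n_l})x_0}^p]$, vanishing by contractivity of $R_n$ (Remark~\ref{re.yosida_boundedness}), Assumption~(A4) and dominated convergence. For the $F$-term, contractivity of $S$ bounds the supremum over $t$ by $\int_0^T\norm{R_{n_k}F(X_{n_k}(s))-R_{n_l}F(X_{n_l}(s))}\,\d s$; splitting $R_{n_k}F(X_{n_k})-R_{n_l}F(X_{n_l})=(R_{n_k}-R_{n_l})F(X_{n_k})+R_{n_l}(F(X_{n_k})-F(X_{n_l}))$, the first summand tends to $0$ in $L^p(\Omega\times[0,T])$ by Lemma~\ref{le.uniform_convergence} (whose hypotheses hold by boundedness and continuity of $F$, tightness of $\{F(X_{n_k}(s)):k\in\N\}$ from Lemma~\ref{le.tightness}, and strong convergence of $R_n$), while the second is controlled by $\norm{F(X_{n_k}(s))-F(X_{n_l}(s))}\le K_F\norm{X_{n_k}(s)-X_{n_l}(s)}^\beta$ and $\E[\norm{X_{n_k}(s)-X_{n_l}(s)}^{p\beta}]\le(\sup_s\E[\norm{X_{n_k}(s)-X_{n_l}(s)}^p])^\beta\to0$, using the $\mathcal{C}([0,T],L^p(\Omega,H))$-convergence.

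The stochastic-convolution term is where the only genuinely new input is needed: a maximal inequality $\E\big[\sup_{t\le T}\norm{\int_0^t S(t-s)\Psi(s-)\,\d L(s)}^p\big]\le C\,\E\big[(\int_0^T\norm{\Psi(s)}_{\mathcal{L}_2(U,H)}^\alpha\,\d s)^{p/\alpha}\big]$ for predictable bounded $\Psi$ and $p<\alpha$, which is available from the construction of c\`adl\`ag mild solutions in \cite{KR} (alternatively, it follows from the factorisation method applied after decomposing $L$ into its large- and small-jump parts, the latter being an $L^q$-martingale for every $q$). Applying it with $\Psi=R_{n_k}G(X_{n_k}(\cdot-))-R_{n_l}G(X_{n_l}(\cdot-))$ and splitting exactly as for $F$, the $(R_{n_k}-R_{n_l})G(X_{n_k})$ part vanishes by Lemma~\ref{le.uniform_convergence} and boundedness of $G$, whereas Hölder continuity of $G$ together with $\alpha\beta\le p$ yields $\big(\int_0^T\norm{X_{n_k}(s)-X_{n_l}(s)}^{\alpha\beta}\,\d s\big)^{p/\alpha}\le C\big(\int_0^T\norm{X_{n_k}(s)-X_{n_l}(s)}^p\,\d s\big)^{\beta}$, whose expectation is at most $C(T\sup_s\E[\norm{X_{n_k}(s)-X_{n_l}(s)}^p])^\beta\to0$ by Jensen's inequality. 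Exploiting the Hölder exponent with $\alpha\beta\le p$ rather than Lipschitz continuity is essential here, since $X_n(t)$, being driven by $\alpha$-stable noise, possesses no moment of order $\ge\alpha$.

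Finally, the space of $H$-valued c\`adl\`ag processes $Y$ with $\E[\sup_{t\le T}\norm{Y(t)}^p]<\infty$ is complete (a Banach space since $p\ge1$), because a uniform limit of c\`adl\`ag functions is c\`adl\`ag; hence $(X_{n_k})$, Cauchy by the preceding estimate, converges there to some $\widetilde Z$ with c\`adl\`ag paths, and a further subsequence $(X_{n_{k_j}})$ satisfies $\sup_{t\le T}\norm{X_{n_{k_j}}(t)-\widetilde Z(t)}\to0$ almost surely. Since $X_{n_{k_j}}(t)\to\widetilde Z(t)$ and $X_{n_{k_j}}(t)\to Z(t)$ in $L^p$ for every $t$, the process $\widetilde Z$ is a c\`adl\`ag modification of $Z$, hence itself a mild solution, and $X_{n_{k_j}}$ converges to it both in $\mathcal{C}([0,T],L^p(\Omega,H))$ and uniformly on $[0,T]$ almost surely, which proves the assertion. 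The main obstacle is the maximal inequality above: controlling $\sup_t$ of the stochastic convolution for a noise with only moments of order below $\alpha$, where the usual factorisation argument has to be supplemented by a jump-size decomposition of $L$.
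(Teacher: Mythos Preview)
Your strategy is sound and close to the paper's, but the presentations differ in two respects. First, the paper compares $X_n$ directly with the already-constructed mild solution $X$ and shows $P\big(\sup_{t\le T}\norm{X(t)-X_n(t)}>r\big)\to 0$, then passes to an a.s.\ subsequence; you instead argue that $(X_{n_k})$ is Cauchy for the norm $\E[\sup_t\norm{\cdot}^p]$ and invoke completeness. Both routes are fine and rest on the same bottleneck: a maximal estimate for the stochastic \emph{convolution} $\int_0^t S(t-s)\Psi(s-)\,\d L(s)$.

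Second, and this is the real point of comparison, the paper supplies that estimate via the Sz.-Nagy dilation theorem: embed $H$ into a larger Hilbert space $\hat H$ on which a unitary $C_0$-group $\hat S$ satisfies $S(t)=\pi\hat S(t)\iota$, rewrite the convolution as $\pi\hat S(t)\int_0^t\hat S(-s)\iota\Psi(s-)\,\d L(s)$, and apply the ordinary maximal inequality \eqref{eq:moment_estimate_integral} to the non-convolution integral in $\hat H$. This yields precisely the bound you postulate, with the right-hand side $\big(\E\int_0^T\norm{\Psi}_{\mathcal{L}_2}^\alpha\,\d s\big)^{p/\alpha}$, and is the tool you should name.

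Your proposed alternative of ``decomposing $L$ into its large- and small-jump parts, the latter being an $L^q$-martingale for every $q$'' does not work here: as the paper's introduction emphasises, the cylindrical process $L$ admits no L\'evy--It\^o decomposition, and even if one truncated pointwise, the small-jump part of a standard $\alpha$-stable cylindrical process is not an $L^q$-martingale for $q>\alpha$. The citation of \cite{KR} is safer but vague; the dilation argument is exactly the missing ingredient you flag as ``the main obstacle''.
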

\begin{proof}
Lemma \ref{le.relative_compactness} enables us to choose a subsequence $(X_n)_{n\in\N}$ of the Yosida approximations which converges in $\mathcal{C}([0,T],L^p(\Omega,H))$ to the mild solution $X$. To prove  almost sure convergence, we fix an arbitrary $r>0$ and estimate
\begin{align}\label{eq.cor-Yosida-sure}
    \begin{split}
   &P\left(\sup_{t\in [0,T]}\norm{X(t)-X_n(t)}>r\right)
   \leq P\left(\sup_{t\in [0,T]}\norm{S(t)(I-R_n)x_0}>\frac{r}{3}\right)\\
  &\qquad\qquad   +P\left(\sup_{t\in [0,T]}\norm{\int_0^tS(t-s)\big(F(X(s))-R_nF(X_n(s))\big)\,{\rm d}s}>\frac{r}{3}\right) \\
    & \qquad\qquad +P\left(\sup_{t\in [0,T]}\norm{\int_0^tS(t-s)\big(G(X(s-))-R_nG(X_n(s-))\big)\,{\rm d}L(s)}>\frac{r}{3}\right). 
   \end{split}
\end{align}
For the following arguments, we define $m:=\sup_{t\in [0,T]}\norm{S(t)}_{\mathcal{L}(H)}$.
As $I-R_n$ converges to zero strongly as $n\to\infty$ we obtain
\begin{align*}
P\left(\sup_{t\in [0,T]}\norm{S(t)(I-R_n)x_0}>\frac{r}{3}\right)\leq P\left(m\norm{(I-R_n)x_0}>\frac{r}{3}\right)\to 0.
\end{align*}
For estimating the second term in \eqref{eq.cor-Yosida-sure}, we 
apply Markov's inequality and Lipschitz continuity of $F$ in (A2) to obtain 
\begin{align*}
    &P\left(\sup_{t\in [0,T]}\norm{\int_0^tS(t-s)\big(F(X(s))-R_nF(X_n(s))\big)\,{\rm d}s}>\frac{r}{3}\right)\\
    &\leq  P\left(m\int_0^T\norm{F(X(s))-R_nF(X_n(s))}{\rm d}s>\frac{r}{3}\right)\\
    &\leq  P\left(\int_0^T\norm{(I-R_n)F(X(s))}\,{\rm d}s>\frac{r}{6m}\right)
    + P\left(\int_0^T\norm{R_n(F(X(s))-F(X_n(s)))}\,{\rm d}s>\frac{r}{6m}\right)\\
    &\leq  \frac{6m}{r}\E\left[\int_0^T\norm{(I-R_n)F(X(s))}\,{\rm d}s\right]
    + \frac{6m}{r}\E\left[\int_0^T\norm{R_n(F(X(s))-F(X_n(s)))}\,{\rm d} s\right]\\
    &\leq \frac{6m}{r}\E\left[\int_0^T\norm{(I-R_n)F(X(s))}\, {\rm d}s\right]
    +\frac{6m}{r} \left(\sup_{n\in\mathbb{N}}\norm{R_n}_{\mathcal{L}(H)}\right)TK_F\sup_{t\in [0,T]}\E\left[\norm{X(t)-X_n(t)}\right].
\end{align*}
We conclude from the last inequality by Lebesgue's dominated convergence theorem and 
convergence of $X_n$ to $X$ in $\mathcal{C}([0,T],L^1(\Omega,H))$ that 
\begin{align*}
    \lim_{n\to\infty} P\left(\sup_{t\in [0,T]}\norm{\int_0^tS(t-s)(F(X(s))-R_nF(X_n(s)))\,{\rm d}s}>\frac{r}{3}\right)=0.
\end{align*}
To estimate the last term in \eqref{eq.cor-Yosida-sure}, 
we apply the dilation theorem for contraction semigroups, see \cite[Th.\ I.8.1]{NAFO}):  there exists a $C_0$-group $(\hat{S}(t))_{t\in\R}$ of unitary operators $\hat{S}(t)$ on a larger Hilbert space $\hat{H}$ in which  $H$ is continuously embedded satisfying  $S(t)=\pi\hat{S}(t)i$ for all $t\ge 0$, where $\pi$ is the projection from $\hat{H}$ to $H$ and $i$ is the continuous embedding of $H$ into $\hat{H}$. Thus, if we denote $m=\sup_{t\in[0,T]}\norm{\pi\hat{S}(t)}_{\mathcal{L}(\hat{H},H)}$, $k=\sup_{s\in [-T,0]}\norm{\hat{S}(s)i}_{\mathcal{L}(H,\hat{H})}$, we may estimate using Markov's inequality, Inequality \eqref{eq:moment_estimate_integral} and Hölder continuity of $G$ in \eqref{ass:coef_G}
\begin{align*}
    &P\left(\sup_{t\in [0,T]}\norm{\int_0^tS(t-s)\left(G(X(s-))-R_nG(X_n(s-))\right){\rm d}L(s)}>\frac{r}{3}\right)\\
    &\leq P\left(\sup_{t\in [0,T]}\norm{\int_0^tS(t-s)\left(I-R_n\right)G(X(s-)){\rm d}L(s)}>\frac{r}{6}\right)\\
    &\qquad\qquad+P\left(\sup_{t\in [0,T]}\norm{\int_0^tS(t-s)R_n\left(G(X(s-))-G(X_n(s-))\right){\rm d}L(s)}>\frac{r}{6}\right)\\
    &=P\left(\sup_{t\in [0,T]}\norm{\int_0^t\pi\hat{S}(t-s)i\left(I-R_n\right)G(X(s-)){\rm d}L(s)}>\frac{r}{6}\right)\\
    &\qquad\qquad+P\left(\sup_{t\in [0,T]}\norm{\int_0^t\pi\hat{S}(t-s)iR_n\left(G(X(s-))-G(X_n(s-))\right){\rm d}L(s)}>\frac{r}{6}\right)\\
    &=P\left(\sup_{t\in [0,T]}\norm{\pi\hat{S}(t)\int_0^t\hat{S}(-s)i\left(I-R_n\right)G(X(s-)){\rm d}L(s)}>\frac{r}{6}\right)\\
    &\qquad\qquad+P\left(\sup_{t\in [0,T]}\norm{\pi\hat{S}(t)\int_0^t\hat{S}(-s)iR_n\left(G(X(s-))-G(X_n(s-))\right){\rm d}L(s)}>\frac{r}{6}\right)\\
    &\leq P\left(\sup_{t\in [0,T]}\norm{\int_0^t\hat{S}(-s)i\left(I-R_n\right)G(X(s-)){\rm d}L(s)}_{\hat{H}}>\frac{r}{6m}\right)\\
    &\qquad\qquad+P\left(\sup_{t\in [0,T]}\norm{\int_0^t\hat{S}(-s)iR_n\left(G(X(s-))-G(X_n(s-))\right){\rm d}L(s)}_{\hat{H}}>\frac{r}{6m}\right)\\
    &\leq\frac{6m}{r}\E\left[\sup_{t\in [0,T]}\norm{\int_0^t\hat{S}(-s)i\left(I-R_n\right)G(X(s-)){\rm d}L(s)}_{\hat{H}}\right] \\
    &\qquad\qquad+\frac{6m}{r}\E\left[\sup_{t\in [0,T]}\norm{\int_0^t\hat{S}(-s)iR_n\left(G(X(s-))-G(X_n(s-))\right){\rm d}L(s)}_{\hat{H}}\right]\\
    &\leq e_{1,\alpha}\frac{6m}{r}\left(\E\left[\int_0^T\norm{\hat{S}(-s)i\left(I-R_n\right)G(X(s-))}_{\mathcal{L}_2(U,\hat{H})}^\alpha{\rm d}s\right]\right)^{1/\alpha}\\
    &\qquad\qquad+e_{1,\alpha}\frac{6m}{r}\left(\E\left[\int_0^T\norm{\hat{S}(-s)iR_n\left(G(X(s-))-G(X_n(s-))\right)}_{\mathcal{L}_2(U,\hat{H})}^\alpha{\rm d}s\right]\right)^{1/\alpha}\\
    &\leq e_{1,\alpha}\frac{6m}{r}k\left(\E\left[\int_0^T\norm{\left(I-R_n\right)G(X(s-))}_{\mathcal{L}_2(U,\hat{H})}^\alpha{\rm d}s\right]\right)^{1/\alpha}\\
    &\qquad\qquad+e_{1,\alpha}\frac{6m}{r}kK_GT^{1/\alpha}\left(\sup_{t\in [0,T]}\E\left[\norm{(X(t))-X_n(t)}^{\alpha\beta}\right]\right)^{1/\alpha}\\
\end{align*}
We conclude from the last inequality by Lebesgue's dominated convergence, strong convergence of $R_n$ to $I$, boundedness $G$ in \eqref{ass:coef_G} and 
convergence of $X_n$ to $X$ in $\mathcal{C}([0,T],L^{\alpha\beta}(\Omega,H))$ that 
\begin{align*}
    \lim_{n\to\infty} P\left(\sup_{t\in [0,T]}\norm{\int_0^tS(t-s)(G(X(s-))-R_nG(X_n(s-)))\,{\rm d}L(s)}>\frac{r}{3}\right)=0,
\end{align*}
We have shown that all the terms on the right hand side of \eqref{eq.cor-Yosida-sure} converge to zero as $n$ tends to infinity which gives uniform convergence of $X_n$ to $X$ in probability on $[0,T]$.
This concludes the proof, since uniform convergence in probability implies the existence of a desired subsequence.
\end{proof}

\section{Moment boundedness for evolution equations}

In this section, we  investigate stability properties of the solution 
for the stochastic evolution equation \eqref{eq:equation} by applying the It{\^o}'s formula derived in Theorem \ref{thm:ito_formula}. More precisely, we shall derive conditions on the coefficients such that 
the mild solution $X$ is ultimately exponentially bounded in the $p$-th moment, that is there exist constants $m_1,m_2,m_3 > 0$ such that
\begin{align*}
\E\left[\norm{X(t)}^p\right]\leq m_1e^{-tm_2}\E\left[\norm{x_0}^p\right]+m_3 \quad \text{for all } t    \geq 0.
\end{align*}

Recall that $\mathcal{C}_b^2(H)$ denotes the space of continuous real-valued functions defined on $H$ with bounded first and second Fréchet derivatives. In what follows, our goal is to derive a Lyapunov-type criterion  using the following operator on $\mathcal{C}_b^2(H)$:
\begin{equation}
\begin{aligned}[b]
    \mathcal{L}f(h)=\langle &Df(h),Ah+F(h)\rangle\\
    &+\int_H \big( f(h+g)-f(h)-\langle Df(h),g \rangle\big)\,\left(\lambda\circ G(h)^{-1}\right)({\rm d}g), \quad h\in D^1
\end{aligned}
\label{def.L}
\end{equation}
for $f\in\mathcal{C}_b^2(H)$. Note that the right hand side of \eqref{def.L} is well defined by Lemma \ref{lemma_estimate_inner_integral}. We can now state the main result of this section, the following general moment boundedness criterion.

\begin{Thm}
Let $p\in (0,1)$ be fixed and $V$ be a function in $\mathcal{C}_b^2(H)$ satisfying for some constants $\beta_1, \beta_2, \beta_3, k_1, k_3>0$ the inequalities
\begin{align}
   &  \beta_1\abs{h}^p-k_1\leq V(h)\leq \beta_2\abs{h}^p \quad\text{for all } h\in H,
    \label{ass:lyapunov_functinon_bound}\\
   &\mathcal{L}V(h)\leq -\beta_3 V(h)+k_3 \quad\text{for all } h\in D^1.
    \label{ass:lyapunov_functinon_generator}
\end{align}
Then the solution $X$ to \eqref{eq:equation} is exponentially ultimately bounded in the $p$-th moment:
\begin{align*}
    \E\left[\abs{X(t)}^p\right]\leq \frac{\beta_2}{\beta_1}e^{-\beta_3 t}\E\left[\norm{x_0}^p\right]+\frac{1}{\beta_1}\left(k_1+\frac{k_3}{\beta_3}\right).
\end{align*}
\label{thm:criterion}
\end{Thm}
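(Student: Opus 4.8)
\emph{Proof plan.} Since the mild solution $X$ is not a semimartingale, I would establish the bound first for the Yosida approximations $X_n$ of \eqref{eq:equation_approx}, which \emph{are} semimartingales, with constants independent of $n$, and then pass to the limit by means of Corollary \ref{le.yosida_convergence_uniform}; the equation being solved on an arbitrary horizon $[0,T]$ and the resulting constants not depending on $T$, this gives the assertion for all $t\ge 0$. The starting observation is that $R_n=n(n-A)^{-1}$ maps $H$ into $D^1$ with $AR_n=n(R_n-I)$ bounded, so that, using also the analyticity estimate \eqref{eq:analytic}, one checks $X_n(s)\in D^1$ for every $s$, $\E\big[\int_0^T\norm{AX_n(s)}\,\d s\big]<\infty$, and that $X_n$ is in fact a process of the form \eqref{eq:differential} with drift $AX_n(s)+R_nF(X_n(s))$ and diffusion coefficient $R_nG(X_n(s-))$ satisfying \eqref{condition:differential_integrability}. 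Applying the It\^o formula of Theorem \ref{thm:ito_formula} to $V\in\mathcal{C}_b^2(H)$ and merging the two Lebesgue integrals I obtain, for $t\in[0,T]$,
\[
  V(X_n(t))=V(R_nx_0)+\int_0^t\mathcal{L}_nV(X_n(s))\,\d s+N_n(t),
\]
where $\mathcal{L}_n$ is the operator \eqref{def.L} with $F,G$ replaced by $R_nF,R_nG$, and $N_n$ is the sum of the cylindrical integral $\int_0^\cdot\langle (R_nG(X_n(s)))^*DV(X_n(s-)),\cdot\rangle\,\d L(s)$ and the jump martingale $M_V$ of Theorem \ref{thm:ito_formula}. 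As $DV,D^2V$ are bounded and $\norm{R_nG(X_n(s))}_{\mathcal{L}_2(U,H)}\le K_G$, the moment estimate \eqref{eq:moment_estimate_integral} with exponent $1<\alpha$ and Lemma \ref{lemma_estimate_inner_integral} show that $N_n$ is, after the standard localisation needed for the unbounded drift term, a genuine mean-zero martingale. Taking expectations, $v_n(t):=\E[V(X_n(t))]$ is absolutely continuous with $v_n'(s)=\E[\mathcal{L}_nV(X_n(s))]$ for a.e.\ $s$.

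Next I would compare $\mathcal{L}_n$ with $\mathcal{L}$. Since $X_n(s)\in D^1$, assumption \eqref{ass:lyapunov_functinon_generator} is available at $h=X_n(s)$, and in the difference $\mathcal{L}_nV(h)-\mathcal{L}V(h)$ the only unbounded ingredient, $\langle DV(h),Ah\rangle$, cancels, leaving
\[
  \mathcal{L}_nV(h)\le-\beta_3V(h)+k_3+\eta_n(h),\qquad h\in D^1,
\]
with $\eta_n(h):=\norm{DV}_\infty\norm{(R_n-I)F(h)}+\big\lvert J(h,R_nG(h))-J(h,G(h))\big\rvert$ and $J(h,\Phi):=\int_H\big(V(h+g)-V(h)-\langle DV(h),g\rangle\big)(\lambda\circ\Phi^{-1})(\d g)$, which is well defined by Lemma \ref{lemma_estimate_inner_integral}. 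Inserting this into $v_n'(s)=\E[\mathcal{L}_nV(X_n(s))]$ and integrating the differential inequality $\tfrac{\d}{\d s}\big(e^{\beta_3 s}v_n(s)\big)\le e^{\beta_3 s}\big(k_3+\E[\eta_n(X_n(s))]\big)$, which is legitimate because $v_n$ is absolutely continuous, gives
\[
  v_n(t)\le v_n(0)e^{-\beta_3 t}+\frac{k_3}{\beta_3}+\int_0^t e^{-\beta_3(t-s)}\E[\eta_n(X_n(s))]\,\d s.
\]

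It then remains to show $\E[\eta_n(X_n(s))]\to 0$ for each $s$. Thanks to $\norm{F}\le K_F$, $\norm{G}_{\mathcal{L}_2(U,H)}\le K_G$ and Lemma \ref{lemma_estimate_inner_integral}, $\eta_n(X_n(s))$ is dominated by a deterministic constant, so convergence in probability suffices. By Lemma \ref{le.tightness} the laws of $X_n(s)$ are tight; given $\varepsilon>0$, pick a compact $\mathcal K\subset H$ with $\inf_nP(X_n(s)\in\mathcal K)>1-\varepsilon$. On $\mathcal K$ and on the compact images $F(\mathcal K),G(\mathcal K)$ the strong convergence $R_n\to I$ is uniform, so $\sup_{h\in\mathcal K}\norm{(R_n-I)F(h)}\to 0$ and $\sup_{h\in\mathcal K}\norm{(R_n-I)G(h)}_{\mathcal{L}_2(U,H)}\to 0$; for the $J$-difference I split the integral at a small radius $1/m$, bounding the inner part uniformly in $h$ and in the two operators by $\tfrac12\norm{D^2V}_\infty d_\alpha^m K_G^\alpha$ with $d_\alpha^m\to 0$ (Inequality \eqref{estimate_alpha_norm}), and on $\overline{B}_H(1/m)^c$ discarding the absolutely convergent integral of $g\mapsto\langle DV(h),g\rangle$, which vanishes by symmetry of $\lambda\circ\Phi^{-1}$, and treating the remaining sublinearly growing integrand $g\mapsto V(h+g)-V(h)$ by weak convergence of $\Phi\mapsto\lambda\circ\Phi^{-1}|_{\overline{B}_H(1/m)^c}$ (Lemma \ref{le.conv_of_radonif_rv}) together with the uniform tail bound $\int_{\norm g>R}\norm g^{p}(\lambda\circ\Phi^{-1})(\d g)\le cR^{p-\alpha}K_G^\alpha$. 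This yields $\sup_{h\in\mathcal K}\eta_n(h)\to 0$, hence $\eta_n(X_n(s))\to 0$ in probability. Passing to the limit in the last display along the subsequence of Corollary \ref{le.yosida_convergence_uniform}: the integral vanishes by dominated convergence; $v_n(0)=\E[V(R_nx_0)]\to\E[V(x_0)]\le\beta_2\E[\norm{x_0}^p]$ by \eqref{ass:lyapunov_functinon_bound}, $\norm{R_n}_{\mathcal L(H)}\le 1$ and dominated convergence; and $v_n(t)=\E[V(X_n(t))]\to\E[V(X(t))]$ by the generalised dominated convergence theorem, using $X_n(t)\to X(t)$ in $L^p(\Omega,H)$, the uniform integrability of $\{\norm{X_n(t)}^p\}_n$ implied by \eqref{proof:relative_compactness_fixed_t1}, and $\lvert V(h)\rvert\le\beta_2\lvert h\rvert^p+k_1$. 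Hence $\E[V(X(t))]\le\beta_2 e^{-\beta_3 t}\E[\norm{x_0}^p]+k_3/\beta_3$, and the lower bound $\beta_1\E[\norm{X(t)}^p]-k_1\le\E[V(X(t))]$ from \eqref{ass:lyapunov_functinon_bound} rearranges to the asserted inequality.

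The main obstacle is the generator convergence $\E[\eta_n(X_n(s))]\to 0$: because the Yosida resolvents converge only strongly one genuinely needs the tightness of $\{X_n(s):n\in\N\}$ from the previous section to upgrade to uniform convergence on compact sets, and because $V$ grows only sublinearly while the stable L\'evy measure is infinite near the origin, the L\'evy-integral part must be handled by combining the small-jump estimate $d_\alpha^m\to 0$ with weak convergence \emph{and} a uniform tail bound, rather than by a plain continuous-mapping argument.
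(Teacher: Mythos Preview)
Your overall strategy coincides with the paper's: apply the It\^o formula of Theorem \ref{thm:ito_formula} to the strong solutions $X_n$ (this is Proposition \ref{pro.weak-is-strong}), identify $v_n(t)=\E[V(X_n(t))]$ with $v_n(0)+\int_0^t\E[\mathcal L_nV(X_n(s))]\,\d s$, compare $\mathcal L_nV$ with $\mathcal LV$, insert \eqref{ass:lyapunov_functinon_generator}, and pass to the limit. The martingale argument, the product-formula/differential-inequality step, and the final rearrangement using \eqref{ass:lyapunov_functinon_bound} are exactly as in the paper.

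The one place where you diverge is in the treatment of the jump part of $\eta_n(h)=|J(h,R_nG(h))-J(h,G(h))|$. Your route---splitting at radius $1/m$, discarding the linear term by symmetry, and invoking weak convergence $\lambda\circ(R_nG(h))^{-1}\Rightarrow\lambda\circ G(h)^{-1}$ on annuli together with a tail bound---is more elaborate than necessary and leaves the \emph{uniformity} over $h\in\mathcal K$ only sketched: weak convergence gives you pointwise convergence in $h$, and turning this into $\sup_{h\in\mathcal K}\eta_n(h)\to 0$ requires a further equicontinuity argument you do not supply. The paper bypasses this entirely by rewriting the $J$-difference as
\[
\int_H\big(V(h+g)-V(h+R_ng)-\langle DV(h),(I-R_n)g\rangle\big)\,\big(\lambda\circ G(h)^{-1}\big)(\d g),
\]
estimating the integrand by $\min(\tfrac12\|D^2V\|_\infty\|(I-R_n)g\|^2,\,2\|DV\|_\infty\|(I-R_n)g\|)$, and then applying \eqref{estimate_alpha_norm} with $\Phi=(I-R_n)G(h)$ to obtain the clean pointwise bound $|J(h,R_nG(h))-J(h,G(h))|\le c\|(I-R_n)G(h)\|_{\mathcal L_2(U,H)}^\alpha$. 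This immediately yields $\sup_{h\in\mathcal K}\eta_n(h)\to 0$ from the uniform convergence of $R_n$ on the compact set $G(\mathcal K)$ that you already established, with no appeal to weak convergence or symmetry. A second minor difference is that the paper uses Fatou's lemma (valid since $V\ge -k_1$) to pass to $\E[V(X(t))]$, whereas you argue via uniform integrability; both are correct, but Fatou is shorter here.
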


Before we prove Theorem \ref{thm:criterion} we demonstrate its application by deriving conditions for moment boundedness in terms of the coefficients of Equation \eqref{eq:equation}. 
\begin{Corollary}
Suppose that there exists $\epsilon>0$ such that
\begin{align*}
    \langle Ah+F(h),h \rangle\leq -\epsilon\abs{h}^2 \quad\text{for all } h\in D^1,
\end{align*}
then the solution to \eqref{eq:equation} is exponentially ultimately bounded in the $p$-th moment for every $p\in (0,1)$.
\end{Corollary}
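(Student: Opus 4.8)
The plan is to apply Theorem~\ref{thm:criterion} with the Lyapunov functional $V(h):=(1+\abs{h}^2)^{p/2}-1$, which is genuinely smooth, vanishes at the origin, and behaves like $\abs{h}^p$ at infinity. First I would verify $V\in\mathcal{C}_b^2(H)$ via the chain rule: one computes $DV(h)=p(1+\abs{h}^2)^{p/2-1}h$ and $D^2V(h)=p(1+\abs{h}^2)^{p/2-1}\mathrm{I}+p(p-2)(1+\abs{h}^2)^{p/2-2}\,h\otimes h$, and since $p\in(0,1)$ all exponents occurring are negative, whence $\abs{DV(h)}\le p$ and $\norm{D^2V(h)}_{\mathcal{L}(H)}\le p(3-p)$ for every $h\in H$.

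Next I would check the two-sided bound \eqref{ass:lyapunov_functinon_bound}. The lower bound follows from $(1+\abs{h}^2)^{p/2}\ge\abs{h}^{p}$, which gives $V(h)\ge\abs{h}^p-1$, so one may take $\beta_1=k_1=1$. For the upper bound, concavity of $t\mapsto(1+t)^{p/2}$ yields $V(h)\le\tfrac{p}{2}\abs{h}^2\le\tfrac{p}{2}\abs{h}^p$ when $\abs{h}\le1$, while $1+\abs{h}^2\le2\abs{h}^2$ gives $V(h)\le 2^{p/2}\abs{h}^p$ when $\abs{h}\ge1$; hence $\beta_2:=2^{p/2}$ works.

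It remains to verify the generator inequality \eqref{ass:lyapunov_functinon_generator}. Since $DV(h)$ is a nonnegative scalar multiple of $h$, the hypothesis $\langle Ah+F(h),h\rangle\le-\epsilon\abs{h}^2$ implies, for $h\in D^1$, that $\langle DV(h),Ah+F(h)\rangle\le-p\epsilon\,\abs{h}^2(1+\abs{h}^2)^{p/2-1}$. The elementary identity $\abs{h}^2(1+\abs{h}^2)^{p/2-1}=(1+\abs{h}^2)^{p/2}-(1+\abs{h}^2)^{p/2-1}=V(h)+1-(1+\abs{h}^2)^{p/2-1}\ge V(h)$, where the last step uses $(1+\abs{h}^2)^{p/2-1}\le1$, converts this into $\langle DV(h),Ah+F(h)\rangle\le-p\epsilon\,V(h)$. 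For the integral term in $\mathcal{L}V(h)$, Lemma~\ref{lemma_estimate_inner_integral} combined with the boundedness of $G$ in Assumption (A3) bounds it uniformly in $h$ by the constant $k_3:=d_\alpha^1\bigl(2\abs{DV}_{\infty}+\tfrac12\abs{D^2V}_{\infty}\bigr)K_G^{\alpha}$. Therefore $\mathcal{L}V(h)\le-p\epsilon\,V(h)+k_3$, i.e.\ \eqref{ass:lyapunov_functinon_generator} holds with $\beta_3:=p\epsilon$, and Theorem~\ref{thm:criterion} yields the claimed exponential ultimate boundedness of $\E[\abs{X(t)}^p]$ for every fixed $p\in(0,1)$.

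The point needing care — more bookkeeping than a genuine obstacle — is the choice of $V$: the sandwich bound \eqref{ass:lyapunov_functinon_bound} forces $V(0)\le0$, which rules out the naive candidate $(c+\abs{h}^2)^{p/2}$, and the drift term has to dominate $V$ itself rather than merely $\abs{h}^p$, which is exactly what the identity $\abs{h}^2(1+\abs{h}^2)^{p/2-1}\ge V(h)$ provides. Subtracting the constant $1$ reconciles these two requirements, after which all remaining estimates are routine.
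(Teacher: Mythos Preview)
Your proof is correct and follows the same strategy as the paper: both apply Theorem~\ref{thm:criterion} with a $\mathcal{C}_b^2$ Lyapunov function that mimics $\abs{h}^p$, and both bound the jump part via Lemma~\ref{lemma_estimate_inner_integral} and Assumption~(A3). The only real difference is the choice of $V$: the paper takes $V(h)=\zeta(\abs{h}^2)$ for an abstractly constructed $\zeta\in\mathcal{C}^2([0,\infty))$ that equals $x^{p/2}$ for $x\ge 1$, whereas you use the explicit $V(h)=(1+\abs{h}^2)^{p/2}-1$. Your choice has the pleasant feature that the drift estimate $\langle DV(h),Ah+F(h)\rangle\le -p\epsilon\,V(h)$ falls out uniformly in $h$ from the algebraic identity $\abs{h}^2(1+\abs{h}^2)^{p/2-1}\ge V(h)$, with no case distinction between small and large $h$; the paper instead splits according to $\abs{h}\le 1$ versus $\abs{h}>1$ and bounds the small-$h$ contribution by an absolute constant absorbed into $k_3$.
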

\begin{proof}
Fix $p\in (0,1)$ and let $\zeta$ be a function in $\mathcal{C}^2([0,\infty))$ satisfying $\zeta (x)=x^{p/2}$ for $x\geq 1$ and $\zeta(x)\leq 1$ for $x<1$. By defining $V(h)=\zeta (\norm{h}^2)$ for all $h\in H$, we obtain  $V\in\mathcal{C}_b^2(H)$ 
and 
\begin{align*}
  V(h)=\abs{h}^p  \quad \text{for all }h\in \overline{B}_H^c \quad\text{ and }\quad
  0\leq V(h)\leq1  \quad \text{for all }h\in \overline{B}_H.
\end{align*}
It follows that \eqref{ass:lyapunov_functinon_bound} holds with $\beta_1=\beta_2=k_1=1$. We show that \eqref{ass:lyapunov_functinon_generator} also holds. By the definition of $V$,  it follows for each $h\in D^1\cap \overline{B}_H^c$ that
\begin{align*}
    \langle DV(h), Ah+F(h)\rangle
    =p\abs{h}^{p-2}\langle h,Ah+F(h)\rangle
    \le  -\epsilon p\abs{h}^p= -\epsilon p V(h). 
\end{align*}
For $h\in D^1\cap \overline{B}_H$, one obtains by boundedness of $F$ in Assumption (A2) that 
\begin{align*}
    \langle DV(h), Ah+F(h)\rangle
    \le \abs{DV}_\infty\left(\abs{A}_{\mathcal{L}(D^1)}+K_F\right). 
\end{align*}
Since Lemma \ref{lemma_estimate_inner_integral}  together 
with boundedness of $G$ in Assumption (A3) implies for each $h\in H$ that 
\begin{align*}
\int_H \big(V(h+g)-V(h)-\langle DV(h),g\rangle\big) \,\left(\lambda\circ G^{-1}(h)\right) ({\rm d}g)
\le d_\alpha^1\left(2\abs{DV}_\infty+\frac{1}{2}\abs{D^2V}_\infty\right)K_G^\alpha,
\end{align*}
we have verified Condition \eqref{ass:lyapunov_functinon_generator}. 
\end{proof}

In the remaining of this section, we prove Theorem \ref{thm:criterion} using the Yosida approximations established in the previous sections.
For this purpose, let $X_n$ denote the mild solution to the approximating equations \eqref{eq:equation_approx} for each $n\in\N$. We may assume
due to Corollary \ref{le.yosida_convergence_uniform}, by passing to a subsequence if necessary, that $X_n$ converges to the solution $X$
of \eqref{eq:equation} uniformly almost surely on $[0,T]$. In what follows, we will routinely pass on to a subsequence without changing the indices.

\begin{Proposition}\label{pro.weak-is-strong}
The mild solution $X_n$ of \eqref{eq:equation_approx} is a strong solution attaining values in $D^1$, that is, for each $t\in [0,T]$, it satisfies 
\begin{align*}
    X_n(t)=R_nx_0+\int_0^t \big( AX_n(s) + R_nF(X_n(s))\big) \,\d s+\int_0^t R_nG(X_n(s-))\, {\rm d}L(s).
\end{align*}

\end{Proposition}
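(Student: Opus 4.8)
The plan is to insert the mild representation of $X_n$ into the right-hand side of the asserted strong identity and to collapse the resulting expression by Fubini-type interchanges, the stochastic one being furnished by Theorem~\ref{thm.fubini}. The decisive algebraic observation is the identity $AR_n=n(R_n-I)$, which together with Remark~\ref{re.yosida_boundedness} shows that $AR_n\in\mathcal{L}(H)$ with $\norm{AR_n}_{\mathcal{L}(H)}\le 2n$; as a consequence, once $(-A)$ has been applied to any of the integrands below the integrand stays uniformly bounded in $s$, so — unlike for $(-A)S(t-s)$ alone — no singularity at the time diagonal occurs. Throughout we fix $t\in[0,T]$, so that no time-dependent null sets need to be tracked.

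\textbf{Step 1 (regularity).} I would first show that $X_n(t)\in D^1$ and that
\[
(-A)X_n(t)=(-A)S(t)R_nx_0+\int_0^t(-A)S(t-s)R_nF(X_n(s))\,\d s+\int_0^t(-A)S(t-s)R_nG(X_n(s-))\,\d L(s).
\]
For the first two (deterministic) terms this is classical, using $R_nx_0,R_nF(X_n(s))\in D^1$ and closedness of $A$. For the stochastic convolution $W_n(t):=\int_0^tS(t-s)R_nG(X_n(s-))\,\d L(s)$ I would invoke Assumption~(A1), by which $(-A)^{-1}\in\mathcal{L}(H)$ and $(-A)^{-1}$ maps $H$ bijectively onto $D^1$. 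Writing $R_nG(X_n(s-))=(-A)^{-1}\big((-A)R_nG(X_n(s-))\big)$, using $S(t-s)(-A)^{-1}=(-A)^{-1}S(t-s)$, and using that a fixed bounded operator may be pulled out of the cylindrical stochastic integral — immediate for simple integrands from the definition \eqref{def:simple_integral} of the Radonified increment, and then in general via \eqref{eq:moment_estimate_integral} — I obtain $W_n(t)=(-A)^{-1}V_n(t)$, where
\[
V_n(t):=\int_0^tS(t-s)\big((-A)R_nG(X_n(s-))\big)\,\d L(s)
\]
is well defined because its integrand is predictable and bounded in $\mathcal{L}_2(U,H)$ by $2nK_G$. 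Hence $W_n(t)\in D^1$ with $(-A)W_n(t)=V_n(t)$, which is the third term above; in particular $s\mapsto(-A)X_n(s)$ is a.s.\ integrable on $[0,t]$.

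\textbf{Step 2 (interchange).} Next I would substitute the displayed formula for $(-A)X_n(s)$ into $\int_0^t(-A)X_n(s)\,\d s$ and interchange the order of integration in each of the three resulting double integrals. For the $R_nx_0$-term, the fundamental theorem of calculus for $(S(t))_{t\ge0}$ on $D^1$ gives $\int_0^t(-A)S(s)R_nx_0\,\d s=R_nx_0-S(t)R_nx_0$. For the drift term, the ordinary Fubini theorem applies since $(r,s)\mapsto\mathbbm{1}_{\{r\le s\le t\}}(-A)S(s-r)R_nF(X_n(r))$ is jointly measurable and a.s.\ bounded, and yields $\int_0^tR_nF(X_n(r))\,\d r-\int_0^tS(t-r)R_nF(X_n(r))\,\d r$. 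For the stochastic term, Theorem~\ref{thm.fubini} applies to $\mathcal{G}(s,r):=\mathbbm{1}_{\{r\le s\le t\}}(-A)S(s-r)R_nG(X_n(r-))$, which is measurable, predictable in its second argument, and obeys $\int_0^t\int_0^t\norm{\mathcal{G}(s,r)}_{\mathcal{L}_2(U,H)}^\alpha\,\d s\,\d r\le(2nK_G)^\alpha t^2<\infty$ a.s.; it produces $\int_0^tR_nG(X_n(r-))\,\d L(r)-\int_0^tS(t-r)R_nG(X_n(r-))\,\d L(r)$.

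\textbf{Step 3 (conclusion).} Summing the three identities and using that $S(t)R_nx_0+\int_0^tS(t-r)R_nF(X_n(r))\,\d r+\int_0^tS(t-r)R_nG(X_n(r-))\,\d L(r)=X_n(t)$ is the mild representation, one gets
\[
\int_0^t(-A)X_n(s)\,\d s=R_nx_0+\int_0^tR_nF(X_n(r))\,\d r+\int_0^tR_nG(X_n(r-))\,\d L(r)-X_n(t),
\]
which, since $-\int_0^t(-A)X_n(s)\,\d s=\int_0^tAX_n(s)\,\d s$, rearranges to exactly the strong identity claimed in the Proposition. The main obstacle is Step~1 — establishing the $D^1$-valuedness of the stochastic convolution and the identity $(-A)W_n(t)=V_n(t)$ — where the cylindrical nature of $L$ must be handled with care through the commutation of a bounded operator with the integral and through the use of $0\in\rho(A)$; once the uniform boundedness coming from $AR_n\in\mathcal{L}(H)$ is in hand, the Fubini interchanges of Step~2 are routine.
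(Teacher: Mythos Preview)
Your proof is correct and follows essentially the same architecture as the paper's: apply $A$ termwise to the mild representation, integrate in time, interchange the order of integration via ordinary and stochastic Fubini (Theorem~\ref{thm.fubini}), and collapse using $\int_0^t AS(s)R_nh\,\d s=S(t)R_nh-R_nh$. The only noteworthy difference is in Step~1: the paper justifies commuting $A$ with the stochastic convolution by working in the $D^1$-norm and invoking $A\in\mathcal{L}(D^1)$ together with $R_n\in\mathcal{L}(H,D^1)$, whereas you use the explicit identity $AR_n=n(R_n-I)\in\mathcal{L}(H)$ and pull $(-A)^{-1}$ through the cylindrical integral; both devices exploit the same underlying regularisation by $R_n$, and neither buys anything the other does not.
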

\begin{proof}
Our argument will follow closely the proof of \cite[Th.2]{AGMRS}. As mild solution, $X_n$ satisfies
\begin{align}
X_n(t)=S(t)R_nx_0+\int_0^tS(t-s)R_nF(X_n(s))\,\d s+\int_0^tS(t-s)R_nG(X_n(s-))\,{\rm d}L(s).
    \label{def:mild_solution_yosida}
\end{align}
The process $X_n$  is  $\left(\mathcal{F}_t\right)$-measurable with c\`adl\`ag paths and attains values in $D^1$.  First, we obtain from \eqref{def:mild_solution_yosida} by interchanging integrals and $A\in\mathcal{L}(D^1)$ for $t\geq 0$  that
\begin{equation}
\begin{aligned}[b]
    AX_n(t)=AS(t)R_nx_0+\int_0^tAS(t-s)&)R_nF(X_n(s))\,{\rm d}s\\
    &+\int_0^tAS(t-s)R_nG(X_n(s-))\, {\rm d}L(s).
\end{aligned}
\label{proof:strong_solution_0}
\end{equation}
Each term on the right hand side of \eqref{proof:strong_solution_0} is almost surely Bochner integrable. Indeed, integrability of the first term is immediate from the uniform boundedness principle. For the second term, boundedness of $F$ in Condition (A2) and commutativity of $S$ and $R_n$ implies 
\begin{align*}
    \int_0^t\int_0^s&\norm{AS(s-r)R_nF(X_n(r))}_1\,{\rm d}r\, {\rm d}s\\
    &\leq \norm{A}_{\mathcal{L}(D^1)}\norm{R_n}_{\mathcal{L}(H,D^1)}\int_0^t\int_0^s\norm{S(s-r)F(X_n(s))}\,{\rm d}r\, {\rm d}s<\infty \quad \text{a.s.}
\end{align*}
Almost sure Bochner integrability of the stochastic integral in \eqref{proof:strong_solution_0} follows from boundedness of $G$ in  Assumption(A3), commutativity of $S$ and $R_n$, and Theorem \ref{thm.fubini} via the estimate
\begin{align*}
& \E\left[\int_0^t\int_0^s\norm{AS(s-r)R_nG(X_n(r))}_{\mathcal{L}_2(U,D^1)}^\alpha\, {\rm d}r\, {\rm d}s\right]\\
    &\qquad \leq\norm{A}_{\mathcal{L}(D^1)}^\alpha\norm{R_n}_{\mathcal{L}(H,D^1)}^\alpha\E\left[\int_0^t\int_0^s\norm{S(s-r)G(X_n(r))}_{\mathcal{L}_2(U,H)}^\alpha\, {\rm d}r\, {\rm d}s\right]<\infty.
\end{align*}
Integrating both sides of \eqref{proof:strong_solution_0} results in the equality 
\begin{align*}
    \int_0^tAX_n(s)\, {\rm d}s&=\int_0^tAS(s)R_nx_0 \,{\rm d}s+\int_0^t\int_0^sAS(s-r)R_nF(X_n(r))\,{\rm d}r\,{\rm d}s\\
    &\qquad +\int_0^t\int_0^sAS(s-r)R_nG(X_n(r-))\,{\rm d}L(r)\,{\rm d}s.
\end{align*}
Applying Fubini's theorems, see Theorem \ref{thm.fubini} for the stochastic version, and the equality $\int_0^t AS(s)R_nh\,\d s = S(t)R_nh-R_nh$ for all $h\in H$ enable us to conclude
\begin{align*}
 \int_0^tAX_n(s)\,{\rm d}s&=\int_0^tAS(s)R_nx_0\, {\rm d}s+\int_0^t\int_r^tAS(s-r)R_nF(X_n(r))\,{\rm d}s\, {\rm d}r\\
    &\qquad+\int_0^t\int_r^tAS(s-r)R_nG(X_n(r-))\,{\rm d}s\,{\rm d}L(r)\\
    &=S(t)R_nx_0-R_nx_0 + \int_0^tS(t-r)R_nF(X_n(r))\,{\rm d}r-\int_0^tR_nF(X_n(r))\,{\rm d}r\\
    &\qquad +\int_0^tS(t-r)R_nG(X_n(r-))\,{\rm d}L(r)-\int_0^tR_nG(X_n(r-))\,{\rm d}L(r)\\
    &=X_n(t)-R_nx_0-\int_0^tR_nF(X_n(r))\,{\rm d}r-\int_0^tR_nG(X_n(r-))\,{\rm d}L(r), 
\end{align*}
which verifies $X_n$ as a strong solution to \eqref{eq:equation}.
\end{proof}

We denote by $\mathcal{L}_n$ the usual generator associated with the Yosida approximations $X_n$, $n\in\mathbb{N}$,  defined for $f\in\mathcal{C}_b^2(H)$ and $h\in D^1$ by
\begin{equation}
    \begin{aligned}[b] \mathcal{L}_n f(h)=\langle &Df(h),Ah+R_nF(h)\rangle\\
    &+\int_H \big(f(h+R_ng)-f(h)-\langle Df(h),R_ng \rangle\big)\, \left(\lambda\circ G(h)^{-1}\right)({\rm d}g). 
    \end{aligned}
    \label{def:generator_yosida}
\end{equation}
The right hand side of \eqref{def:generator_yosida} is well defined by Lemma \ref{lemma_estimate_inner_integral}.
Recall that the counterpart to $\mathcal{L}_n$ for the mild solution $X$ denoted by $\mathcal{L}$ was introduced in \eqref{def.L}. The generators $\mathcal{L}_n$ and $\mathcal{L}$ are related by the following crucial convergence result.
\begin{Lemma}\label{le.generator_convergence}
Let $(X_n)_{n\in\N}$ be solutions of \eqref{eq:equation_approx} which a.s.\ converges uniformly to the solution of \eqref{eq:equation}. 
It follows for each $f\in\mathcal{C}_b^2(H)$ that
\begin{align*}
    \lim_{n \rightarrow \infty}\E\left[\int_0^T\bigg|\mathcal{L}_n f(X_n(s))\right.&\left.-\mathcal{L}f(X_n(s))\bigg|\, {\rm d}s\right]=0.
\end{align*}
\end{Lemma}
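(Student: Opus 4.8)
The plan is to split the integrand into a drift difference and a jump difference, to show that this difference tends to $0$ pointwise in $(\omega,s)$, and to dominate it by a deterministic constant, so that the dominated convergence theorem on $[0,T]\times\Omega$ closes the argument. Subtracting \eqref{def:generator_yosida} from \eqref{def.L}, the term $\langle Df(h),Ah\rangle$ cancels, and one obtains for every $h\in D^1$ — in particular for $h=X_n(s)$, which lies in $D^1$ by Proposition~\ref{pro.weak-is-strong} — that
\begin{align*}
\mathcal{L}_n f(h)-\mathcal{L}f(h)=\langle Df(h),(R_n-I)F(h)\rangle+\int_H\big(\psi_h(R_n g)-\psi_h(g)\big)\,\big(\lambda\circ G(h)^{-1}\big)({\rm d}g),
\end{align*}
where $\psi_h(g):=f(h+g)-f(h)-\langle Df(h),g\rangle$; note that this expression no longer involves the unbounded operator $A$.

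For the domination I would use $\norm{R_n}_{\mathcal{L}(H)}\le 1$ (Remark~\ref{re.yosida_boundedness}), the bounds $\norm{F(h)}\le K_F$ and $\norm{G(h)}_{\mathcal{L}_2(U,H)}\le K_G$ from Assumptions~(A2) and (A3), the Taylor estimates $\abs{\psi_h(g)}\le\tfrac12\norm{D^2f}_{\infty}\norm{g}^2$ and $\abs{\psi_h(R_ng)-\psi_h(g)}\le 2\norm{Df}_{\infty}\norm{R_ng-g}$ obtained as in the proof of Lemma~\ref{lemma_estimate_inner_integral}, and inequality \eqref{estimate_alpha_norm}. Splitting $H$ into $\overline{B}_H(1/m)$, the bounded set $A_m:=\{g\in H:1/m<\norm{g}\le m\}$, and $\overline{B}_H(m)^c$, these estimates together with $\norm{R_ng}\le\norm{g}$ show that the contribution of each of $\overline{B}_H(1/m)$ and $\overline{B}_H(m)^c$ to the jump integral is at most $c_f\, d_\alpha^m K_G^\alpha$, where $c_f:=\norm{D^2f}_{\infty}+4\norm{Df}_{\infty}$, uniformly in $n$, $s$ and $\omega$, and that the drift term is bounded by $2\norm{Df}_\infty K_F$. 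Choosing $m=1$, so that $A_1=\emptyset$, yields a finite constant $C=C(\alpha,f,K_F,K_G)$ with $\abs{\mathcal{L}_n f(X_n(s))-\mathcal{L}f(X_n(s))}\le C$ for all $n\in\N$, $s\in[0,T]$ and $\omega\in\Omega$.

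For the pointwise convergence, fix $\omega$ in the almost sure event on which $X_n\to X$ uniformly on $[0,T]$, and fix $s\in[0,T]$; write $h_n:=X_n(s)$, so $h_n\to X(s)$ and, by continuity of $G$, $G(h_n)\to G(X(s))$ in $\mathcal{L}_2(U,H)$. The drift difference tends to $0$ because $\norm{(R_n-I)F(h_n)}\le\norm{(R_n-I)F(X(s))}+2\norm{F(h_n)-F(X(s))}\to0$ by strong convergence of $R_n$ and continuity of $F$. For the jump difference I would fix $m\in\N$ and split $H$ as above: the two tail pieces are at most $c_f\,d_\alpha^m K_G^\alpha$ uniformly in $n$, while on $A_m$ the bound $\abs{\psi_{h_n}(R_ng)-\psi_{h_n}(g)}\le 2\norm{Df}_{\infty}\norm{R_ng-g}$ reduces matters to showing
\begin{align*}
\int_{A_m}\norm{R_ng-g}\,\big(\lambda\circ G(h_n)^{-1}\big)({\rm d}g)\longrightarrow 0\qquad\text{as }n\to\infty.
\end{align*}
This is the crucial step, and the only genuinely delicate one. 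By Lemma~\ref{le.conv_of_radonif_rv} the measures $\lambda\circ G(h_n)^{-1}$ restricted to $\overline{B}_H(1/m)^c$ converge weakly, and by \eqref{eq:estimate_ball_alpha} and \eqref{eq:stable_ball_radius} their total masses are uniformly bounded by $c_\alpha m^\alpha K_G^\alpha$; hence the family $\{\lambda\circ G(h_n)^{-1}|_{\overline{B}_H(1/m)^c}:n\in\N\}$ is tight. Since $g\mapsto\norm{R_ng-g}$ is bounded on $A_m$ and the convergence $R_n\to I$ is uniform on every compact subset of $H$, the displayed limit follows by a standard argument: split the integral over $A_m$ using a compact set carrying all but $\varepsilon$ of the mass, bound the integrand by its supremum over that compact set on one part and by $2m\varepsilon$ on the other, and let $n\to\infty$. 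Letting first $n\to\infty$ and then $m\to\infty$, the jump difference, and hence $\mathcal{L}_n f(X_n(s))-\mathcal{L}f(X_n(s))$, tends to $0$ for every $s\in[0,T]$ on the almost sure event. Combining this pointwise convergence with the uniform bound $C$, the dominated convergence theorem on $[0,T]\times\Omega$ gives $\E\big[\int_0^T\abs{\mathcal{L}_n f(X_n(s))-\mathcal{L}f(X_n(s))}\,{\rm d}s\big]\to0$, which is the assertion. The main obstacle is the middle-range limit above, where both the integrand and the integrating measure depend on $n$; it is handled by marrying the weak convergence of the Radonified Lévy measures, their uniform mass bound, and the uniform-on-compacts convergence $R_n\to I$.
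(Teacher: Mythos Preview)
Your argument is correct but follows a genuinely different route from the paper's.

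The paper observes that the jump difference in $\mathcal{L}_nf(h)-\mathcal{L}f(h)$ can be bounded, after Taylor expansion, by integrals of $\norm{(I-R_n)g}^2$ over $\overline{B}_H$ and of $\norm{(I-R_n)g}$ over $\overline{B}_H^c$ against $\lambda\circ G(h)^{-1}$, and then invokes inequality~\eqref{estimate_alpha_norm} with the operator $(I-R_n)G(h)$ to obtain directly
\[
\big|\mathcal{L}_nf(h)-\mathcal{L}f(h)\big|\le \norm{Df}_\infty\norm{(I-R_n)F(h)}+c\,\norm{(I-R_n)G(h)}_{\mathcal{L}_2(U,H)}^\alpha.
\]
This reduces the problem to showing $\norm{(I-R_n)G(X_n(s))}_{\mathcal{L}_2}\to 0$ pointwise, which the paper gets from compactness of the trajectory $\{G(X_m(s)(\omega)):m\in\N\}$ in $\mathcal{L}_2(U,H)$ together with the uniform-on-compacts convergence $R_n\to I$; DCT then closes as in your proof. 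In contrast, you keep the integrating measure as $\lambda\circ G(h_n)^{-1}$, split $H$ into three annular regions, handle the tails by~\eqref{estimate_alpha_norm}, and on the middle annulus invoke Lemma~\ref{le.conv_of_radonif_rv} together with Prokhorov to obtain tightness of the Radonified L\'evy measures in $H$, so that $R_n\to I$ uniformly on the resulting compact set does the work. Your compactness lives on the $H$-side, the paper's on the $\mathcal{L}_2(U,H)$-side. The paper's route is shorter and avoids the weak-convergence/tightness machinery; your route is more explicit about where the mass sits and does not require the somewhat delicate passage from the integrals of $\norm{(I-R_n)g}^{2}$ and $\norm{(I-R_n)g}$ back to $\norm{(I-R_n)G(h)}_{\mathcal{L}_2}^\alpha$, since in those integrals the domain is determined by $g$ rather than by $(I-R_n)g$.
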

\begin{proof}
We obtain for each $h\in D^1$ that 
\begin{align}\label{eq.L_n-L}
&\vert \mathcal{L} f(h)-\mathcal{L}_nf(h)\vert
        \leq\abs{Df}_\infty\abs{\left( I-R_n\right)F(h)}\notag \\
        & +\int_{\overline{B}_H}\vert f(h+g)-f(h+R_ng)-\langle Df(h),\left(I-R_n\right)g\rangle\vert \,\left(\lambda\circ G(h)^{-1}\right)({\rm d}g)\\
        & +\int_{\overline{B}_H^c} \!\vert f(h+g)-f(h+R_ng)\vert  \left(\lambda\circ G(h)^{-1}\right)({\rm d}g)
          +\int_{\overline{B}_H^c}\!\vert\langle Df(h),\left(I-R_n\right)g\rangle\vert  \left(\lambda\circ G(h)^{-1}\right)({\rm d}g).\notag
\end{align}
Taylor's remainder theorem in the integral form implies
\begin{align*}
 &\int_{\overline{B}_H}\vert f(h+g)-f(h+R_ng)-\langle Df(h),\left(I-R_n\right)g\rangle\vert \,\left(\lambda\circ G(h)^{-1}\right)({\rm d}g)\\
 &\qquad \leq \int_{\overline{B}_H}\int_0^1\vert \langle D^2f(h+\theta\left(I-R_n\right)g)\left(I-R_n\right)g, \left(I-R_n\right)g\rangle(1-\theta)\,\vert{\rm d}\theta\, \left(\lambda\circ G(h)^{-1}\right)({\rm d}g)\\
&\qquad\leq \frac{1}{2}\abs{D^2f}_{\infty}\int_{\overline{B}_H}\abs{\left(I-R_n\right)g}^2\,\left(\lambda\circ G(h)^{-1}\right)({\rm d}g).
 \end{align*}
In the same way, we obtain 
\begin{align*}
\int_{\overline{B}_H^c}\!\vert f(h+g)-f(h+R_ng)\vert \left(\lambda\circ G(h)^{-1}\right)({\rm d}g)
\leq \abs{Df}_\infty\int_{\overline{B}_H^c} \!\abs{ \left( I-R_n \right)g} \left(\lambda\circ G(h)^{-1}\right)({\rm d}g)
\end{align*}
and also
\begin{align*}
    \int_{\overline{B}_H^c}\!\vert\langle Df(h),\left(I-R_n\right)g\rangle\vert  \left(\lambda\circ G(h)^{-1}\right)({\rm d}g)\leq \abs{Df}_\infty\int_{\overline{B}_H^c} \!\abs{ \left( I-R_n \right)g} \left(\lambda\circ G(h)^{-1}\right)({\rm d}g)
\end{align*}
Applying the last three estimates to \eqref{eq.L_n-L} and taking expectation on both sides, it follows from Inequality \eqref{estimate_alpha_norm} and for each $n\in\N$ that
\begin{align*}
    &\E\left[\int_0^T\left\vert \mathcal{L}_n f(X_n(s)) -\mathcal{L}f(X_n(s))\right\vert{\rm d}s \right] \\
    &\quad\leq\abs{Df}_\infty\E\left[\int_0^T\abs{\left( I-R_n\right)F(X_n(s))}{\rm d}s\right]
    +c\E\left[\int_0^T\abs{\left( I-R_n\right)G(X_n(s))}_{\mathcal{L}_2(U,H)}^\alpha{\rm d}s\right],
\end{align*}
where $c:=d_\alpha^1\left(2\abs{Df}_\infty+\frac{1}{2}\abs{D^2f}_\infty\right)$. 

To complete the proof, it remains to show that both
\begin{align}
&\lim_{n\to\infty}\E\left[\int_0^T\abs{\left( I-R_n\right)F(X_n(s))}{\rm d}s\right]=0,
    \label{proof:generator_convergence_1}\\
&\lim_{n\to\infty}\E\left[\int_0^T\abs{\left( I-R_n\right)G(X_n(s))}_{\mathcal{L}_2(U,H)}^\alpha{\rm d}s\right]=0.
    \label{proof:generator_convergence_2}
\end{align}
Let $t\in [0,T]$ be arbitrary but fixed, and recall that we chose $X_n(t)$  almost surely convergent and thus $\lbrace X_m(t)(\omega):m\in\mathbb{N} \rbrace\subset H$ is compact for almost all $\omega\in\Omega$. Strong convergence of $I-R_n$ to zero, see  \cite[Le.\ 3.4]{EN}, continuity of $F$ and $G$ and the fact that continuous mapping converging pointwise to a continuous mapping converge uniformly over compacts together imply for each $t\in [0,T]$ that, almost surely, we obtain
\begin{align*}
&\lim_{n\to\infty}    \abs{\left( I-R_n\right)F(X_n(t))}\leq\lim_{n \rightarrow \infty}\sup_{m\in\mathbb{N}}\abs{\left( I-R_n\right)F(X_m(t))}=0, \\
&\lim_{n\to\infty}      \abs{\left( I-R_n\right)G(X_n(t))}_{\mathcal{L}_2(U,H)}^\alpha\leq\lim_{n \rightarrow \infty} \sup_{m\in\mathbb{N}}\abs{\left( I-R_n\right)G(X_m(t))}_{\mathcal{L}_2(U,H)}^\alpha=0.
\end{align*}
Since the boundedness conditions in (A2) and (A3) guarantee 
\begin{align*}
    \abs{\left( I-R_n\right)F(X_n(t))} &\leq\left(\sup_{n\in\mathbb{N}}\abs{I-R_n}_{\mathcal{L}(H)}\right)K_F \quad \text{a.s.,}\\
    \abs{\left( I-R_n\right)G(X_n(t))}_{\mathcal{L}_2(U,H)}^\alpha
    &\leq\left(\sup_{n\in\mathbb{N}}\abs{I-R_n}_{\mathcal{L}(H)}^\alpha\right) K_G^\alpha \quad \text{a.s.}
\end{align*}
an application of Lebesgue's dominated convergence theorem verifies \eqref{proof:generator_convergence_1} and \eqref{proof:generator_convergence_2}, which completes the proof.
\end{proof}

\begin{proof}[Proof of Theorem \ref{thm:criterion}]
Let $(X_n)_{n\in\N}$ be the solutions of \eqref{eq:equation_approx}.
Because of Corollary \ref{le.yosida_convergence_uniform}, we can assume 
that $(X_n)_{n\in\N}$  \ converges uniformly to the solution of \eqref{eq:equation} a.s. Proposition \ref{pro.weak-is-strong} enables us to apply the Itô formula in Theorem \ref{thm:ito_formula}  to $X_n$, which results in 
\begin{align}
& V(X_n(t))=V(X_n(0)+\int_0^t\mathcal{L}_nV(X_n(s)){\rm d}s+\int_0^t\langle G(X_n(s-))^*R_n^*DV(X_n(s-)), \cdot\rangle\, {\rm d}L(s)\notag \\
    &\quad +\int_0^t\int_HV(X_n(s-)+h)-V(X_n(s-))-\langle DV(X_n(s-)),h \rangle(\mu^{X_n}-\nu^{X_n})\, ({\rm d}s, \d h)
    \label{proof:criterion_1}
\end{align}
almost surely for all $t\geq 0$. 
Applying the product formula to the real-valued semi-martingale $V(X_n(\cdot))$ and the function $t\mapsto e^{\beta_3t}$ and taking expectations on both sides of \eqref{proof:criterion_1} shows
\begin{align}
    e^{\beta_3 t}\E\left[V(X_n(t))\right]=&\E\left[V(X_n(0))\right]+\E\left[\int_0^te^{\beta_3 s}\left(\beta_3 V(X_n(s))+\mathcal{L}_nV(X_n(s)) \right){\rm d}s\right].
    \label{proof:criterion_2}
\end{align}
Here, we used the fact that the last two  integrals in \eqref{proof:criterion_1} define martingales, and thus have expectation zero. This follows from the observation that they are local martingales according to Lemma \ref{le.local_martingle_property} and Theorem \ref{thm:ito_formula} and are uniformly bounded. The latter is guaranteed by the
boundedness of $G$ in (A3), since 
\begin{align*}
    \E\left[\int_0^t\abs{\langle G(X_n(s))^*R_n^*DV(X_n(s)), \cdot\rangle}_{\mathcal{L}_2(U,\mathbb{R})}^\alpha\,{\rm d}s\right]
    &=\E\left[\int_0^t\abs{ G(X_n(s))^*R_n^*DV(X_n(s))}^\alpha\,{\rm d}s\right]\\
    &\leq \abs{R_n}_{\mathcal{L}(H)}^\alpha\abs{DV}_\infty^\alpha TK_G^\alpha<\infty,
\end{align*}
and similarly, by using Lemma \ref{lemma_estimate_inner_integral}, 
\begin{align*}
    \E\left[\int_0^t\right.&\left.\int_H\vert V (X_n(s-)+h)-V(X_n(s-))-\langle DV(X_n(s-),h\rangle\vert\,\nu^{X_n}({\rm d}s,{\rm d}h)\right]\\
    &\leq d_\alpha^1\left(2\abs{DV}_\infty+\frac{1}{2}\abs{D^2V}_\infty\right)\abs{R_n}_{\mathcal{L}(H)}^\alpha\E\left[\int_0^t\abs{G(X_n(s))}_{\mathcal{L}_2(U,H)}^\alpha\,{\rm d}s\right]<\infty.
\end{align*}
The first term on the right hand side in \eqref{proof:criterion_2} is finite since
\begin{align*}
    \E\left[V(X_n(0))\right]\leq\beta_2\norm{R_n}_{\mathcal{L}(H)}^p\E\left[\norm{x_0}^p\right]<\infty.
\end{align*}
The same holds for the second term, which can be shown 
using the same arguments as in the proof of Lemma \ref{le.generator_convergence}.
By applying  Inequality \eqref{ass:lyapunov_functinon_generator} to \eqref{proof:criterion_2}, we conclude
\begin{align*}
    e^{\beta_3 t}\E\left[[V(X_n(t))\right] 
    &\leq \E\left[V(X_n(0))\right]+\E\Bigg[\int_0^te^{\beta_3 s}\bigg(-\mathcal{L}V(X_n(s))+\mathcal{L}_nV(X_n(s))+k_3\bigg){\rm d}s\Bigg]\\
    &\leq \E\left[V(X_n(0))\right]+e^{\beta_3 T}\E\Bigg[\int_0^t\bigg| \mathcal{L}_nV(X_n(s))-\mathcal{L}V(X_n(s))\bigg|{\rm d}s\Bigg]+\frac{k_3}{\beta_3}\left( e^{\beta_3 t}-1\right).
\end{align*}
Lemma \ref{le.generator_convergence}  together with Fatou's lemma implies
\begin{align*}
    \E\left[V(X (t))\right]
    \leq\liminf_{n\to\infty}\E\left[V(X_n (t))\right]
\leq e^{-\beta_3 t}\E\left[V(x_0)\right]+\frac{k_3}{\beta_3}.
\end{align*}
Applying Assumption \eqref{ass:lyapunov_functinon_bound}
completes the proof.
\end{proof}

\section{Mild Itô formula}

In this section, we prove an Itô formula for mild solutions of Equation \eqref{eq:equation} and mappings $f\in\mathcal{C}_b^2(H)$ such that the second derivative $D^2f$ is not only continuous but satisfies
\begin{align}
    \lim_{n \rightarrow \infty}\norm{g_n-g}=0 \implies \lim_{n \rightarrow \infty}\sup_{h\in \overline{B}_H}\norm{D^2f(g_n+h)-D^2f(g+h)}_{\mathcal{L}(H)}=0.
    \label{ass:f_uniform_continuity}
\end{align}
The subspace of all these functions is denoted by $\mathcal{C}_{b,u}^2(H)$.
\begin{Thm}[It{\^o} formula for mild solutions]
A mild solution $X$ of \eqref{eq:equation} satisfies for each $f\in \mathcal{C}_{b,u}^2(H)$
and $t\geq 0$ that
    \begin{align}   \label{eq:ito_formula_mild}
            f(X(t))&=f(x_0) +\int_0^t \langle G(X(s-)^*Df(X(s-))), \cdot\rangle \,{\rm d}L(s)\\
            &\qquad+\int_0^t\int_H \big(f(X(s-)+h)-f(X(s-))-\langle Df(X(s-)),h\rangle \big)\, \left(\mu^X-\nu^X\right) ({\rm d}s,{\rm d}h)\notag\\
            &\qquad+\lim_{n\to\infty}\left(\int_0^t\langle Df(X_n(s)), AX_n(s)\rangle\, {\rm d}s\right)
            +\int_0^t\langle Df(X(s)), F(X(s)) \rangle \, {\rm d}s\notag\\
            &\qquad+\int_0^t\int_H \big(f(X(s)+h)-f(X(s))-\langle Df(X(s)),h\rangle\big)\, \left(\lambda\circ G(X(s))^{-1}\right)({\rm d}h)\,{\rm d}s, \notag
    \end{align}
    where the limit is taken in $L_P^0(\Omega,\mathbb{R})$.
    \label{prop:ito_formula_mild}
\end{Thm}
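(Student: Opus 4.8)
The strategy is to apply the strong Itô formula (Theorem~\ref{thm:ito_formula}) to each Yosida approximation $X_n$, which by Proposition~\ref{pro.weak-is-strong} is a genuine strong solution of the form \eqref{eq:differential} with $F(s)=AX_n(s)+R_nF(X_n(s))$ and $G(s)=R_nG(X_n(s-))$, and then pass to the limit $n\to\infty$ in every term using the almost sure uniform convergence $X_n\to X$ on $[0,T]$ supplied by Corollary~\ref{le.yosida_convergence_uniform}. For a fixed $f\in\mathcal{C}_{b,u}^2(H)$ and $t\ge 0$, Theorem~\ref{thm:ito_formula} gives
\begin{align*}
f(X_n(t))&=f(R_nx_0)+\int_0^t\langle Df(X_n(s)),AX_n(s)\rangle\,\d s+\int_0^t\langle Df(X_n(s)),R_nF(X_n(s))\rangle\,\d s\\
&\quad+\int_0^t\langle G(X_n(s-))^*R_n^*Df(X_n(s-)),\cdot\rangle\,\d L(s)\\
&\quad+\int_0^t\int_H\big(f(X_n(s-)+R_nh)-f(X_n(s-))-\langle Df(X_n(s-)),R_nh\rangle\big)\,(\lambda\circ G(X_n(s))^{-1})(\d h)\,\d s\\
&\quad+M_f^n(t),
\end{align*}
where $M_f^n$ is the jump martingale. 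Rearranged, this identifies $\int_0^t\langle Df(X_n(s)),AX_n(s)\rangle\,\d s$ as $f(X_n(t))-f(R_nx_0)$ minus the remaining terms; since $f$ is bounded and Lipschitz (bounded derivative), $f(X_n(t))\to f(X(t))$ and $f(R_nx_0)\to f(x_0)$ in $L_P^0(\Omega,\mathbb{R})$, so the claimed limit on the right-hand side of \eqref{eq:ito_formula_mild} will exist in $L_P^0(\Omega,\mathbb{R})$ precisely because every other term converges; the content of the theorem is that those other terms converge to the displayed $X$-expressions.

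The term-by-term convergence splits into four pieces. For the drift term $\int_0^t\langle Df(X_n(s)),R_nF(X_n(s))\rangle\,\d s$, boundedness of $Df$ and $F$, together with $R_n\to I$ strongly, continuity of $F$, and the almost sure uniform convergence $X_n\to X$, give convergence to $\int_0^t\langle Df(X(s)),F(X(s))\rangle\,\d s$ via dominated convergence --- the same argument already used in Lemma~\ref{le.generator_convergence}. For the stochastic integral, one writes the difference of integrands and estimates its $L^\alpha([0,T],\mathcal{L}_2(U,\mathbb{R}))$-norm: $G(X_n(s-))^*R_n^*Df(X_n(s-)) - G(X(s-))^*Df(X(s-))$ is controlled by $\norm{Df}_\infty\norm{(R_n^*-I)G(X_n(s-))^*}$ plus $\norm{Df}_\infty K_G\norm{X_n(s-)-X(s-)}^\beta$ plus a term from continuity of $Df$; boundedness of $G$ and Assumptions (A2)--(A3) make these dominated, and Inequality~\eqref{eq:moment_estimate_integral} transfers the $L^\alpha$-convergence to convergence of the integrals in probability, uniformly on $[0,T]$. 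For the compensator-type $\d s$-integral, the same dominated-convergence scheme applies once we note, exactly as in Lemma~\ref{le.generator_convergence}, that $\int_H(\cdots)(\lambda\circ G(X_n(s))^{-1})(\d h)\to\int_H(\cdots)(\lambda\circ G(X(s))^{-1})(\d h)$ pointwise in $s$ by Lemma~\ref{le.conv_of_radonif_rv} and that the integrand difference involving $R_nh$ versus $h$ vanishes by the estimates of Lemma~\ref{lemma_estimate_inner_integral}; crucially, condition \eqref{ass:f_uniform_continuity} in the definition of $\mathcal{C}_{b,u}^2(H)$ is what upgrades pointwise continuity of $D^2f$ to the uniform control over $h\in\overline{B}_H$ needed to handle the Taylor remainder uniformly, while on $\overline{B}_H^c$ boundedness of $Df$ suffices.

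The remaining piece is the jump martingale: we must show $M_f^n(t)\to M_f(t):=\int_0^t\int_H(f(X(s-)+h)-f(X(s-))-\langle Df(X(s-)),h\rangle)(\mu^X-\nu^X)(\d s,\d h)$ in $L_P^0(\Omega,\mathbb{R})$. For the compensator part $\int_0^t\int_H(\cdots)\nu^{X_n}(\d s,\d h)=\int_0^t\int_H(\cdots)(\lambda\circ G(X_n(s))^{-1})(\d h)\,\d s$ this is already covered by the previous paragraph. For the jump-measure part $\int_0^t\int_H(\cdots)\mu^{X_n}(\d s,\d h)=\sum_{0\le s\le t}k_n(\Delta X_n(s))$ with $k_n(h):=f(X_n(s-)+h)-f(X_n(s-))-\langle Df(X_n(s-)),h\rangle$, the difficulty is that this functional is not quite of the form covered by Lemma~\ref{limit_of_jump_measure} because the summand depends on $X_n(s-)$ and not just on $\Delta X_n(s)$. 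I would handle this by a truncation argument: split the integrand at $\{\norm{h}\le\epsilon\}$, bound the small-jump contribution uniformly in $n$ by $\tfrac12\norm{D^2f}_\infty\int_0^t\int_{\overline B_H(\epsilon)}\norm{h}^2\mu^{X_n}(\d s,\d h)$ --- controlled in expectation via its compensator and Lemma~1 / \eqref{estimate_alpha_norm}, which is small uniformly in $n$ for small $\epsilon$ --- and on the complement $\{\norm h>\epsilon\}$ use that $X_n\to X$ uniformly so that (outside a set of arbitrarily small probability) the large jumps of $X_n$ and $X$ are eventually in bijection with matching locations and nearly-matching sizes, whence the finitely many surviving summands converge by continuity of $f$ and $Df$ and the uniform convergence of $X_n(s-)\to X(s-)$. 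This truncation-plus-jump-matching step, reconciling the $X_n(s-)$-dependence of the integrand with the convergence of the jump measures, is the main obstacle; everything else is a variation on arguments already carried out in Lemmas~\ref{le.generator_convergence}, \ref{limit_of_jump_measure} and \ref{limit_of_compensators}.
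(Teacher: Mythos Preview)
Your proposal is correct and follows essentially the same route as the paper: apply the strong It\^o formula to the Yosida approximations $X_n$ (Proposition~\ref{pro.weak-is-strong}), pass to the limit term-by-term using the almost sure uniform convergence from Corollary~\ref{le.yosida_convergence_uniform}, and deduce convergence of the $A$-term from convergence of everything else. The only organizational difference is that you split the jump martingale $M_f^n$ into its $\mu^{X_n}$ and $\nu^{X_n}$ parts before passing to the limit, whereas the paper keeps the full $\mu^{X_n}$-integral together and proves its convergence in a dedicated lemma (Lemma~\ref{le.convergence_technical}) --- first replacing $X_n(s-)$ by $X(s-)$ in the integrand via the estimate of Lemma~\ref{le.weak_convergence_assumption} (this is where \eqref{ass:f_uniform_continuity} enters), and then handling $\int T_f(X(s-),h)\,(\mu^{X_n}-\mu^X)$ by the same truncation-plus-jump-matching you describe. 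Your sketch of the truncation argument is accurate; just be aware that the ``bijection'' of large jumps needs the careful indicator splitting $\mathbbm{1}_{\overline B_H(1/m)^c}(\Delta X_n)-\mathbbm{1}_{\overline B_H(1/m)^c}(\Delta X)=\mathbbm{1}_{\overline B_H(1/m)^c}(\Delta X_n)\mathbbm{1}_{\overline B_H(1/m)}(\Delta X)-\mathbbm{1}_{\overline B_H(1/m)}(\Delta X_n)\mathbbm{1}_{\overline B_H(1/m)^c}(\Delta X)$ to control boundary effects, exactly as in the paper's Lemma~\ref{le.convergence_technical}.
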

\begin{Remark}
    Note that while $X$ may not be a semimartingale, the compensated measure $\mu^X-\nu^X$ in \eqref{eq:ito_formula_mild} still exists as $X$ is both adapted and c\`adl\`ag; see \cite[Chap.\ II]{JS}.
\end{Remark}
\begin{Remark}
Unlike in similar situation with the driving noise being Gaussian e.g. in \cite{maslowski1995stability} we do not identify the limit in \eqref{eq:ito_formula_mild} as then the imposed assumptions on $f$ are very restrictive. In many applications (see e.g. \cite{AGMRS}), it is enough to identify some bound on
\begin{align*}
    \lim_{n\to\infty}\left(\int_0^t\langle Df(X_n(s)), AX_n(s)\rangle\, {\rm d}s\right)
\end{align*}
which leads to natural assumptions on the generator $A$.
\end{Remark}
We divide the proof of the above theorem in some technical lemmas. To simplify the notation, we introduce the function $T_f\colon H\times H \rightarrow \mathbb{R}$ for $f\in \mathcal{C}_{b,u}^2(H)$ defined by
\begin{align*}
    T_f(g,h)=f(g+h)-f(g)-\langle Df(g),h\rangle, \quad g,h\in H.
\end{align*}

\begin{Lemma}\label{le.weak_convergence_assumption}
Let $\lambda$ be the cylindrial L\'evy measure of $L$. 
It follows for every $f\in\mathcal{C}_b^2(H)$, $\phi\in \mathcal{L}_2(U,H)$ and $g, h\in H$  that 
\begin{align*}
    \int_{H}&\left\vert T_f(g,b)-T_f(h,b)\right\vert\left(\lambda\circ \phi^{-1}\right)({\rm d}b)\\
    &\leq 2d_\alpha^1\norm{\phi}_{\mathcal{L}_2(U,H)}^\alpha\left(\sup_{b\in \overline{B}_H}\norm{D^2f(g+b)-D^2f(h+b)}_{\mathcal{L}(H)}+\norm{D^2f}_\infty\norm{g-h}\right).
\end{align*}
\end{Lemma}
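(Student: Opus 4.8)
The plan is to split the difference $T_f(g,b)-T_f(h,b)$ into the part coming from the region $\overline{B}_H$ (small jumps) and the part coming from $\overline{B}_H^c$ (large jumps), and to estimate each using Taylor's remainder theorem together with the bound \eqref{estimate_alpha_norm} on the cylindrical Lévy measure from the first Lemma of Section 2.

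For the small jumps, I would write, using Taylor's remainder theorem in integral form exactly as in the proof of Lemma \ref{lemma_estimate_inner_integral},
\[
    T_f(g,b)-T_f(h,b)=\int_0^1\big\langle \big(D^2f(g+\theta b)-D^2f(h+\theta b)\big)b,\,b\big\rangle(1-\theta)\,{\rm d}\theta,
\]
so that for $b\in\overline{B}_H$ one has $\big|T_f(g,b)-T_f(h,b)\big|\le\tfrac12\,\sup_{b\in\overline{B}_H}\norm{D^2f(g+b)-D^2f(h+b)}_{\mathcal{L}(H)}\,\norm{b}^2$; integrating against $\lambda\circ\phi^{-1}$ over $\overline{B}_H$ and invoking the first integral bound in \eqref{estimate_alpha_norm} with $m=1$ contributes at most $\tfrac12 d_\alpha^1\norm{\phi}_{\mathcal{L}_2(U,H)}^\alpha\sup_{b\in\overline{B}_H}\norm{D^2f(g+b)-D^2f(h+b)}_{\mathcal{L}(H)}$.

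For the large jumps I would not try to exploit the second-order cancellation, since on $\overline{B}_H^c$ only the first moment of $\lambda\circ\phi^{-1}$ is controlled by \eqref{estimate_alpha_norm}. Instead I would bound $T_f(g,b)-T_f(h,b)$ by a first-order quantity in $\norm{b}$ whose coefficient is of order $\norm{g-h}$. Concretely, write $T_f(g,b)-T_f(h,b)=\big(f(g+b)-f(h+b)\big)-\big(f(g)-f(h)\big)-\langle Df(g)-Df(h),b\rangle$; the first bracket is $\int_0^1\langle Df(h+\theta(g-h)+b)-Df(h+\theta(g-h)),\,\cdot\,\rangle$ type expressions — more simply, $f(g+b)-f(h+b)=\int_0^1\langle Df(h+b+\theta(g-h)),g-h\rangle\,{\rm d}\theta$ and likewise $f(g)-f(h)=\int_0^1\langle Df(h+\theta(g-h)),g-h\rangle\,{\rm d}\theta$, so their difference equals $\int_0^1\langle Df(h+b+\theta(g-h))-Df(h+\theta(g-h)),g-h\rangle\,{\rm d}\theta$, which is bounded by $\norm{D^2f}_\infty\norm{b}\,\norm{g-h}$ by the mean value inequality along the segment from $h+\theta(g-h)$ to $h+b+\theta(g-h)$. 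Similarly $\langle Df(g)-Df(h),b\rangle\le\norm{D^2f}_\infty\norm{g-h}\,\norm{b}$. Hence $\big|T_f(g,b)-T_f(h,b)\big|\le 2\norm{D^2f}_\infty\norm{g-h}\,\norm{b}$ on $\overline{B}_H^c$, and integrating against $\lambda\circ\phi^{-1}$ over $\overline{B}_H^c$ using the second integral bound in \eqref{estimate_alpha_norm} with $m=1$ contributes at most $2d_\alpha^1\norm{D^2f}_\infty\norm{g-h}\,\norm{\phi}_{\mathcal{L}_2(U,H)}^\alpha$. Adding the two contributions and bounding $\tfrac12\le 2$ gives the claimed inequality; the main (mild) obstacle is keeping the constants consistent with the single $d_\alpha^1$ in \eqref{estimate_alpha_norm} and choosing the right first-order estimate on $\overline{B}_H^c$, since the naive second-order bound is not integrable there.
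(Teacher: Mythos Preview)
Your proof is correct and follows essentially the same approach as the paper: the small-jump estimate over $\overline{B}_H$ is identical, and for the large-jump part over $\overline{B}_H^c$ the paper also bounds $|T_f(g,b)-T_f(h,b)|$ by $2\norm{D^2f}_\infty\norm{g-h}\,\norm{b}$, the only cosmetic difference being that the paper expands $f(g+b)-f(g)$ and $f(h+b)-f(h)$ along the $b$-direction (so the difference of derivatives $Df(g+\theta b)-Df(h+\theta b)$ appears) rather than along the $(g-h)$-direction as you do. Both routes yield the same pointwise bound and the same final constant after invoking \eqref{estimate_alpha_norm}.
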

\begin{proof}
Taylor's remainder theorem in the integral form and Inequality \eqref{estimate_alpha_norm} imply
\begin{align}
        \int_{\overline{B}_H}&\left\vert T_f(g,b)-T_f(h,b)\right\vert\,\left(\lambda\circ \phi^{-1}\right)({\rm d}b)\notag\\
        &=\int_{\overline{B}_H}\left\vert \int_0^1 \langle (D^2f(g+\theta b)-D^2f(h+\theta b))b,b\rangle(1-\theta)\,{\rm d}\theta\right\vert\,\left(\lambda\circ \phi^{-1}\right)({\rm d}b)\notag\\
        &\leq\frac{1}{2}\sup_{b\in \overline{B}_H}\norm{D^2f(g+b)-D^2f(h+b)}_{\mathcal{L}(H)}\int_{\overline{B}_H}\norm{b}^2\,\left(\lambda\circ \phi^{-1}\right)({\rm d}b)\notag\\
        &\leq\frac{1}{2}d_\alpha^1\left(\sup_{b\in \overline{B}_H}\norm{D^2f(g+b)-D^2f(h+b)}_{\mathcal{L}(H)}\right)\norm{\phi}_{\mathcal{L}_2(U,H)}^\alpha.
    \label{proof:lemma_weak_convergence_formula1}
\end{align}
A similar argument yields
\begin{equation}
    \begin{aligned}[b]
    \int_{\overline{B}_H^c}&\left\vert T_f(g,b)-T_f(h,b)\right\vert\,\left(\lambda\circ \phi^{-1}\right)({\rm d}b)\\
        &\leq\int_{\overline{B}_H^c}\left\vert \int_0^1\langle Df(g+\theta b)-Df(h+\theta b),b\rangle \,{\rm d}\theta\right\vert\,\left(\lambda\circ \phi^{-1}\right)({\rm d}b)\\
   &\qquad\qquad   +\int_{\overline{B}_H^c}\left\vert\langle Df(g)-Df(h), b\rangle\right\vert\,\left(\lambda\circ \phi^{-1}\right)({\rm d}b)\\
           &\leq\left(\sup_{b\in H}\norm{Df(g+b)-Df(h+b)}
  +\norm{Df(g)-Df(h)}\right)\int_{\overline{B}_H^c}\norm{b}\,\left(\lambda\circ \phi^{-1}\right)({\rm d}b)\\
        &\leq 2d_\alpha^1 \norm{D^2f}_\infty\norm{g-h}\norm{\phi}_{\mathcal{L}_2(U,H)}^\alpha.\\
    \end{aligned}
    \label{proof:lemma_weak_convergence_formula2}
\end{equation}
Combining Inequalities \eqref{proof:lemma_weak_convergence_formula1} and \eqref{proof:lemma_weak_convergence_formula2} completes the proof.
\end{proof}

\begin{Lemma}\label{le.convergence_technical}
Let $(X_n)_{n\in\N}$ be a sequence of c\`adl\`ag processes in $H$ which converges to a  process $X$
both in $\mathcal{C}([0,T], L^p(\Omega,H))$ and uniformly on $[0,T]$ almost surely. Then it follows for any $f\in\mathcal{C}_{b,u}^2(H)$ and $t\in [0,T]$ that
\begin{equation*}
 \lim_{n\to\infty}\int_0^t\int_H T_f(X_n(s-),h)\,\mu^{X_n}({\rm d}s,{\rm d}h)=\int_0^t\int_HT_f(X(s-),h)\,\mu^{X}({\rm d}s,{\rm d}h)\qquad\text{in } L_P^0(\Omega,\mathbb{R}).
\end{equation*}
\end{Lemma}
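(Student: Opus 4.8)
The plan is to reduce everything to sums over jumps: for any $H$-valued adapted c\`adl\`ag process $Y$ one has $\int_0^t\int_H T_f(Y(s-),h)\,\mu^Y(\d s,\d h)=\sum_{0<s\le t}T_f(Y(s-),\Delta Y(s))$, so the claim is that $\sum_{0<s\le t}T_f(X_n(s-),\Delta X_n(s))\to\sum_{0<s\le t}T_f(X(s-),\Delta X(s))$ in probability. First I would record the pointwise Taylor estimate $|T_f(g,h)|\le\tfrac12\abs{D^2f}_\infty\norm{h}^2$, valid for all $g,h\in H$ by the same computation as in the proof of Lemma \ref{lemma_estimate_inner_integral}; this shows both sums are absolutely convergent as soon as $\sum_{0<s\le T}\norm{\Delta X_n(s)}^2$ and $\sum_{0<s\le T}\norm{\Delta X(s)}^2$ are finite a.s. Then, fixing $\epsilon>0$ and letting $D_\epsilon:=\{s\in(0,t]:\norm{\Delta X(s)}>\epsilon\}$ (a.s.\ finite), I would split each sum into its part over $D_\epsilon$ and its part over $(0,t]\setminus D_\epsilon$.

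The part over $D_\epsilon$ is handled exactly as in Lemma \ref{limit_of_jump_measure}: uniform a.s.\ convergence of $X_n$ to $X$ gives $\sup_{s\in[0,t]}\norm{\Delta X_n(s)-\Delta X(s)}\to 0$ and $X_n(s-)\to X(s-)$, hence by joint continuity of $T_f$ the finite sum $\sum_{s\in D_\epsilon}T_f(X_n(s-),\Delta X_n(s))$ converges a.s.\ to $\sum_{s\in D_\epsilon}T_f(X(s-),\Delta X(s))$. For the complementary part, once $n$ is large enough that $\sup_s\norm{\Delta X_n(s)-\Delta X(s)}\le\epsilon$, every $s\in(0,t]\setminus D_\epsilon$ satisfies $\norm{\Delta X_n(s)}\le 2\epsilon$, so the Taylor estimate bounds it by $\tfrac12\abs{D^2f}_\infty Q_n(2\epsilon)$, where $Q_n(\delta):=\sum_{0<s\le T,\,\norm{\Delta X_n(s)}\le\delta}\norm{\Delta X_n(s)}^2$; similarly the tail of the sum for $X$ is bounded by $\tfrac12\abs{D^2f}_\infty Q(\epsilon)$ with $Q(\delta):=\sum_{0<s\le T,\,\norm{\Delta X(s)}\le\delta}\norm{\Delta X(s)}^2$.

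The crux of the argument, and the step I expect to be the main obstacle, is then to show that $Q_n(2\epsilon)$ and $Q(\epsilon)$ are small in probability as $\epsilon\downarrow 0$, uniformly in $n$. For the approximations $X_n$ — which in the intended application are the Yosida approximations and hence, by Proposition \ref{pro.weak-is-strong}, strong solutions with a continuous drift, so their jumps coincide with those of $\int_0^\cdot R_nG(X_n(s-))\,\d L(s)$ — Theorem \ref{th.compensator} identifies the compensator of $\mu^{X_n}$, and writing $2\epsilon=1/m$ the compensator relation \eqref{eq:compensator_expectation} together with Inequality \eqref{estimate_alpha_norm}, the bound $\norm{R_n}_{\mathcal{L}(H)}\le 1$ and Assumption (A3) gives
\begin{align*}
\E\bigl[Q_n(1/m)\bigr]=\E\Bigl[\int_0^T\!\!\int_{\overline{B}_H(1/m)}\norm{h}^2\,\bigl(\lambda\circ(R_nG(X_n(s)))^{-1}\bigr)(\d h)\,\d s\Bigr]\le d_\alpha^m\,T K_G^\alpha,
\end{align*}
and $d_\alpha^m\to 0$ as $m\to\infty$ by \eqref{estimate_alpha_norm}, uniformly in $n$. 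The bound for $Q(\epsilon)$ follows from this one by a Fatou argument: for $\omega$ in the convergence set and any finite collection $s_1,\dots,s_N$ of jump times of $X$ with $\norm{\Delta X(s_k)}\le\epsilon$ one has, for $n$ large, $\sum_{k\le N}\norm{\Delta X(s_k)}^2=\lim_n\sum_{k\le N}\norm{\Delta X_n(s_k)}^2\le\liminf_n Q_n(2\epsilon)$, hence $Q(\epsilon)\le\liminf_n Q_n(2\epsilon)$ a.s.\ and $\E[Q(\epsilon)]\le\sup_n\E[Q_n(2\epsilon)]\le d_\alpha^m\,T K_G^\alpha$ by Fatou's lemma when $\epsilon=1/(2m)$; it suffices to let $\epsilon\downarrow 0$ along this sequence. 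Markov's inequality turns these $L^1$-bounds into smallness in probability, and combining the three ingredients — sending $n\to\infty$ first to absorb the finitely many large jumps, then $\epsilon\downarrow 0$ — yields convergence in $L_P^0(\Omega,\R)$. The essential point is that, unlike in Lemma \ref{limit_of_jump_measure} where the integrand vanishes near the origin, here one must control the accumulated small jumps, and this is exactly where the $\alpha$-stable structure (through \eqref{estimate_alpha_norm} with $d_\alpha^m\to 0$) and the explicit compensator of Theorem \ref{th.compensator} are indispensable.
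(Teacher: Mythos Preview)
Your argument is correct and takes a genuinely different route from the paper's. The paper decomposes in two stages: first it replaces $X_n(s-)$ by $X(s-)$ in the first slot of $T_f$ while keeping the measure $\mu^{X_n}$, controlling this via the compensator identity and Lemma~\ref{le.weak_convergence_assumption} (this is where the uniform-continuity hypothesis \eqref{ass:f_uniform_continuity}, i.e.\ $f\in\mathcal{C}_{b,u}^2(H)$, enters); then it replaces $\mu^{X_n}$ by $\mu^X$, splitting by jump size and using the compensator of \emph{both} $\mu^{X_n}$ and $\mu^X$. You instead split once, by the jump size of $X$, and handle large and small jumps directly. This buys you two things. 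First, your proof never invokes Lemma~\ref{le.weak_convergence_assumption} and hence never uses \eqref{ass:f_uniform_continuity}: joint continuity of $T_f$ on the finite set $D_\epsilon$ and the crude Taylor bound $|T_f(g,h)|\le\tfrac12\abs{D^2f}_\infty\norm{h}^2$ elsewhere suffice, so the lemma in fact holds for all $f\in\mathcal{C}_b^2(H)$. Second, your Fatou argument for $Q(\epsilon)$ sidesteps the need to identify the compensator of $\mu^X$ itself; the paper's proof writes down $\nu^X(\d s,\d h)=(\lambda\circ G(X(s-))^{-1})(\d h)\,\d s$ without justification (Theorem~\ref{th.compensator} only covers integral processes $\int_0^\cdot G\,\d L$, not mild solutions), whereas you obtain the same small-jump control for $X$ purely from the corresponding control for $X_n$. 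Both proofs share the feature that they use the specific Yosida structure of $X_n$ (through Proposition~\ref{pro.weak-is-strong} and Theorem~\ref{th.compensator}) rather than the bare hypotheses of the lemma as stated; you flag this explicitly, and the paper does the same implicitly.
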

\begin{proof}
Theorem \ref{th.compensator} guarantees for each $n\in\mathbb{N}$ that
\begin{align}    \label{proof:convergence_technicall}
 &\E\left[\left\vert\int_0^t\int_H\big(T_f(X_n(s-),h)-T_f(X(s-),h)\big)\,\mu^{X_n}({\rm d}s,{\rm d}h)\right\vert\right]\notag\\
    &\qquad\leq\E\left[\int_0^t\int_H\big\vert T_f(X_n(s-),h)-T_f(X(s-),h)\big\vert\,\mu^{X_n}({\rm d}s,{\rm d}h)\right]\notag\\
    &\qquad=\E\left[\int_0^t\int_H\big\vert T_f(X_n(s),h)-T_f(X(s),h)\big\vert\,\nu^{X_n}({\rm d}s,{\rm d}h)\right]\notag\\
    &\qquad=\E\left[\int_0^t\int_H\big\vert T_f(X_n(s),h)-T_f(X(s),h)\big\vert\,\left(\lambda\circ (R_nG(X_n(s-)))^{-1}\right)({\rm d}h)\, {\rm d}s\right].
\end{align}
Since Remark \ref{re.yosida_boundedness} guarantees $c:=2 d_\alpha^1 K_G^\alpha\sup_{n\in\mathbb{N}}\norm{R_n}_{\mathcal{L}(H)}^\alpha<\infty$, we obtain from  
Lemma \ref{le.weak_convergence_assumption} for $P\otimes\rm{Leb}$-a.a.\  $(\omega,s )\in \Omega\times [0,T]$ that
\begin{align*}
 & \int_H\left\vert T_f(X_n(s)(\omega),h)-T_f(X(s)(\omega),h)\right\vert\, \left(\lambda\circ (R_nG(X_n(s-)(\omega)))^{-1}\right)({\rm d}h)\\
& \leq  c \Bigg(\sup_{b\in \overline{B}_H}\norm{D^2f(X_n(s)(\omega)+b)-D^2f(X(s)(\omega)+b)}_{\mathcal{L}(H)} \\
&\qquad\qquad\qquad\qquad   +\norm{D^2f}_\infty \norm{f(X_n(s)(\omega))- f(X(s)(\omega))}\Bigg).
\end{align*}
Since $f$ satisfies \eqref{ass:f_uniform_continuity}, Lebesgue's dominated convergence theorem implies 
\begin{align}
\lim_{n\to\infty}E\left[\left\vert\int_0^t\int_H \big( T_f(X_n(s-),h)-T_f(X(s-),h)\big)\,\mu^{X_n}({\rm d}s,{\rm d}h)\right\vert\right]
    =0.
\end{align}
For the next step, fix  $\epsilon, \epsilon^\prime>0$, and use for any $m,n\in\mathbb{N}$ the decomposition 
\begin{align}
&\left(\left\vert\int_0^t\int_HT_f(X(s-),h)\left(\mu^{X_n}({\rm d}s,{\rm d}h)-\mu^{X}({\rm d}s,{\rm d}h)\right)\right\vert>\epsilon\right)\notag\\
 &\qquad\leq P\left(\left\vert\int_0^t\int_{\overline{B}_H(1/m)}T_f(X(s-),h)\,\mu^{X_n}({\rm d}s,{\rm d}h)\right\vert>\frac{\epsilon}{3}\right)\notag\\
&\qquad\qquad\qquad+P\left(\left\vert\int_0^t\int_{\overline{B}_H(1/m)}T_f(X(s-),h)\, \mu^{X}({\rm d}s,{\rm d}h)\right\vert>\frac{\epsilon}{3}\right)\notag\\
    &\qquad\qquad\qquad +P\left(\left\vert\int_0^t\int_{\overline{B}_H(1/m)^c}T_f(X(s-),h)\, \left(\mu^{X_n}({\rm d}s,{\rm d}h)-\mu^{X}({\rm d}s,{\rm d}h)\right)\right\vert>\frac{\epsilon}{3}\right).
    \label{proof:convergence_technical0}
\end{align}
Since Taylor's remainder theorem in the integral form implies $|T_f(X(s-),h)|\leq \frac{1}{2}\norm{D^2f}_\infty\norm{h}^2$ for all $h\in H$, 
we obtain by applying Theorem \ref{th.compensator} and Inequality \eqref{estimate_alpha_norm} that
\begin{align*}
&    \E\left[\left\vert\int_0^t\int_{\overline{B}_H(1/m)}T_f(X(s-),h)\, \mu^{X_n}({\rm d}s,{\rm d}h)\right\vert\right]\\
    &\qquad \le \frac{1}{2}\norm{D^2f}_\infty\E\left[\int_0^t\int_{\overline{B}_H(1/m)}\norm{h}^2\, \left(\lambda\circ \left(R_nG(X_n(s))\right)^{-1}\right)({\rm d}h){\rm d}s\right]
 \leq d_\alpha^mK_G^\alpha\frac{1}{2}\norm{D^2f}_\infty T.
\end{align*}
Since the last line is independent of $n\in\mathbb{N}$ and $d_\alpha^m\to 0$ as $m\to\infty$ according to Inequality \eqref{estimate_alpha_norm},  Markov's inequality implies that there exists $m_1\in\N$ such that 
for all $m\ge m_1$ and all $n\in \N$ 
\begin{align}\label{proof:convergence_technical2}
P\left(\left\vert\int_0^t\int_{\overline{B}_H(1/m)} T_f(X(s-),h)\,\mu^{X_n}({\rm d}s,{\rm d}h)\right \vert >\frac{\epsilon}{3}\right)\le \epsilon^\prime. 
\end{align}
Exactly the same arguments establish that for all $m\ge m_1$
\begin{align}\label{proof:convergence_technical3}
   P\left(\left\vert\int_0^t\int_{\overline{B}_H(1/m)}T_f(X(s-),h)\,\mu^{X}({\rm d}s,{\rm d}h)\right\vert>\frac{\epsilon}{3}\right)\le \epsilon^\prime. 
\end{align}
For the last term in \eqref{proof:convergence_technical0} we calculate for each $m,n\in\N$ that
\begin{align}\label{proof:convergence_technical3b}
    &    \int_0^t\int_{\overline{B}_H(1/m)^c}T_f(X(s-),h)\, \left(\mu^{X_n}({\rm d}s, {\rm d}h)-\mu^X({\rm d}s, {\rm d}h)\right)\notag \\
    &=\sum_{0\leq s\leq t} T_f(X(s-),\Delta X_n(s))\mathbbm{1}_{\overline{B}_H(1/m)^c}({\Delta X_n(s)})\notag\\
    &\qquad  -\sum_{0\leq s\leq t} T_f(X(s-),\Delta X(s))\mathbbm{1}_{\overline{B}_H(1/m)^c}({\Delta X(s)})\notag\\
    &=    \sum_{0\leq s\leq t}T_f(X(s-),\Delta X_n(s))\left(\mathbbm{1}_{\overline{B}_H(1/m)^c}({\Delta X_n(s)})-\mathbbm{1}_{\overline{B}_H(1/m)^c}({\Delta X(s)})\right)\notag\\
    &\qquad+ \sum_{0\leq s\leq t}\big(T_f(X(s-),\Delta X_n(s))-T_f(X(s-),\Delta X(s))\big)\mathbbm{1}_{\overline{B}_H(1/m)^c}({\Delta X(s)}).
\end{align}
For estimating the first term in the last line, we use 
the equality $\mathbbm{1}_A(x)-\mathbbm{1}_{A}(y)=\mathbbm{1}_A(x)\mathbbm{1}_{A^c}(y)-\mathbbm{1}_{A^c}(x)\mathbbm{1}_A(y)$. 
For the first term, resulting from the application of this identity, we conclude from  Taylor's remainder theorem in the integral form that 
\begin{align}\label{proof:convergence_technical30e}
    &\left\vert\sum_{0\leq s\leq t} T_f\left(X(s-\right),\Delta X_n(s))\mathbbm{1}_{\overline{B}_H(1/m)^c}(\Delta X_n(s))\mathbbm{1}_{\overline{B}_H(1/m)}(\Delta X(s))\right\vert\notag\\
    & \leq \sum_{0\leq s\leq t}\big|T_f(X(s-),\Delta X_n(s))\big|\mathbbm{1}_{\overline{B}_H(1/m)^c}(\Delta X_n(s))\mathbbm{1}_{\overline{B}_H(1/m)}(\Delta X(s))\notag\\
    & \leq\frac12\norm{D^2f}_{\infty}\sum_{0\leq s\leq t}\norm{\Delta X_n(s)}^2\mathbbm{1}_{\overline{B}_H(1/m)^c}(\Delta X_n(s))\mathbbm{1}_{\overline{B}_H(1/m)}(\Delta X(s))\notag\\
    &\leq c_f \!\!\sum_{0\leq s\leq t}\!\!\left(\norm{\Delta X(s)}^2+\norm{\Delta X_n(s)-\Delta X(s)}^2\right)\mathbbm{1}_{\overline{B}_H(1/m)^c}(\Delta X_n(s))\mathbbm{1}_{\overline{B}_H(1/m)}(\Delta X(s)),
\end{align}
where we used the notation $c_f:=\norm{D^2f}_{\infty}$. 
Applying Theorem \ref{th.compensator}  and using the boundedness assumption on $G$ in (A3) result in  
\begin{align*}
&   \E\left[\sum_{0\leq s\leq t}\norm{\Delta X(s)}^2\mathbbm{1}_{\overline{B}_H(1/m)}(\Delta X(s))\right]
    =\E\left[\int_0^t\int_{\overline{B}_H(1/m)}\norm{h}^2\, \mu^X({\rm d}s, {\rm d}h)\right]\\
&\qquad     =\E\left[\int_0^t\int_{\overline{B}_H(1/m)}\norm{h}^2\,\big(\lambda\circ G(X(s-))^{-1}\big)({\rm d}h, {\rm d}s)\right]
    \leq d_\alpha^mTK_G^\alpha.
\end{align*}
Since $d_\alpha^m\to 0$ as $m\to\infty$, Markov's inequality implies that there exists $m_2\in\N$ with $m_2 \ge m_1$ such that 
for all $m\ge m_2$ and all $n\in \N$ 
\begin{align}
P\left(\sum_{0\leq s\leq t}\norm{\Delta X(s)}^2 \mathbbm{1}_{\overline{B}_H(1/m)^c}(\Delta X_n(s))\mathbbm{1}_{\overline{B}_H(1/m)}({\Delta X(s)})\ge \frac{\epsilon}{24c_f}\right)\le \frac{\epsilon^\prime}{8}.
    \label{proof:convergence_technical3e}
\end{align}
In the remaining part of the proof fix $m\ge m_2 \ge m_1$  such that \eqref{proof:convergence_technical3e} is satisfied.
There exists $n_1\in \N$ such that the set $A_n:=\{\sup_{s\in [0,t]}\norm{\Delta X_n(s)-\Delta X(s)}\le \tfrac{1}{2m} \}$ satisfies $P(A_n^c)\le \tfrac{\epsilon^\prime}{16} $ for all $n\ge n_1$. If $\sup_{s\in [0,t]}\norm{\Delta X_n(s)}\ge \tfrac1m$ then we obtain on $A_n$ for $n\ge n_1$ that 
\begin{align*}
  \sup_{s\in [0,t]}\norm{\Delta X(s)}\ge - \sup_{s\in [0,t]}\norm{\Delta X(s)- \Delta X_n(s)} +\sup_{s \in[0,t]} \norm{\Delta X_n(s)}\ge \frac{1}{2m}. 
\end{align*}
Consequently, we obtain for all $n\ge n_1$ that 
\begin{align*}
& P\left(\sum_{0\leq s\leq t}\norm{\Delta X_n(s)-\Delta X(s)}^2\mathbbm{1}_{\overline{B}_H(1/m)^c}(\Delta X_n(s))\mathbbm{1}_{\overline{B}_H(1/m)}(\Delta X(s))\ge \frac{\epsilon}{24c_f}\right)\\
&\qquad \leq  P\left(\sum_{0\leq s\leq t}\norm{\Delta X_n(s)-\Delta X(s)}^2\mathbbm{1}_{\overline{B}_H(1/2m)^c}(\Delta X(s))\ge \frac{\epsilon}{24c_f}\right)+\frac{\epsilon^\prime}{16}. 
\end{align*}
Since $X$ has only finitely many jumps in $\overline{B}_H(1/2m)^c$ on $[0,t]$ and $\Delta X_n(s)$ converges to $\Delta X(s)$ 
for all $s\in [0,t]$, there exists $n_2$ such that for all $n\ge n_2$ 
\begin{align*}
    P\left(\sum_{0\leq s\leq t}\norm{\Delta X_n(s)-\Delta X(s)}^2\mathbbm{1}_{\overline{B}_H(1/m)^c}(\Delta X_n(s))\mathbbm{1}_{\overline{B}_H(1/m)}(\Delta X(s)) \ge \frac{\epsilon}{24c_f}\right)\le \frac{\epsilon^\prime}{8}.
\end{align*}
Applying this together  with \eqref{proof:convergence_technical3e} to Inequality \eqref{proof:convergence_technical30e} proves that for $m\ge m_2$ there exists $ n_2$ such that for  all $n\ge n_2$ 
\begin{align}
    P\left( \sum_{0\leq s\leq t} T_f\left(X(s-\right),\Delta X_n(s))\mathbbm{1}_{\overline{B}_H(1/m)^c}(\Delta X_n(s))\mathbbm{1}_{\overline{B}_H(1/m)}(\Delta X(s)) \ge \frac{\epsilon}{12}\right)\le \frac{\epsilon^\prime}{4}.\quad
    \label{proof:convergence_technical3d}
\end{align}
As $\Delta X_n$ converges to $\Delta X$ uniformly on $[0,T]$ almost surely we obtain that for almost all $\omega\in\Omega$ we have
\begin{align*}
    \mathbbm{1}_{\overline{B}_H(1/m)}(\Delta X_n(s)(\omega))\mathbbm{1}_{\overline{B}_H(1/m)^c}(\Delta X(s)(\omega))=0
\end{align*}
if $n$ is large enough. Therefore
\begin{align*}
    \lim_{n\to\infty}\left(\sum_{0\leq s\leq t}T_f(X(s-), \Delta X_n(s))\mathbbm{1}_{\overline{B}_H(1/m)}(\Delta X_n(s))\mathbbm{1}_{\overline{B}_H(1/m)^c}(\Delta X(s))\right)=0 \quad \text{a.s.}
\end{align*}
and thus we obtain that there exists $n_3$ such that for all $n\ge n_3$ we have
\begin{align*}
P\left(  \sum_{0\leq s\leq t}T_f(X(s-), \Delta X_n(s))\mathbbm{1}_{\overline{B}_H(1/m)}(\Delta X_n(s))\mathbbm{1}_{\overline{B}_H(1/m)^c}(\Delta X(s))\ge \frac{\epsilon}{12}\right) \le \frac{\epsilon^\prime}{4}. 
\end{align*}
Combining this with \eqref{proof:convergence_technical3d} shows  that for $m\ge m_2$ 
and $n\ge \max\{n_2,n_3\}$ we have
\begin{align}   \label{proof:convergence_technical3c}
P\left(\sum_{0\leq s\leq t}T_f(X(s-),\Delta X_n(s))\left(\mathbbm{1}_{\overline{B}_H(1/m)^c}({\Delta X_n(s)})-\mathbbm{1}_{\overline{B}_H(1/m)^c}({\Delta X(s)})\right)\ge \frac{\epsilon}{6}\right)\le \frac{\epsilon^\prime}{2}. 
\end{align}
Since $X$ has only finitely many jumps in $\overline{B}_H(1/m)^c$ on $[0,t]$ and $\Delta X_n(s)$ converges to $\Delta X(s)$ 
for all $s\in [0,t]$, there exits $n_4$ such that for all $n\ge n_4$
\begin{align*}
P\left( \sum_{0\leq s\leq t}\big(T_f(X(s-),\Delta X_n(s))-T_f(X(s-),\Delta X(s))\big)\mathbbm{1}_{\overline{B}_H(1/m)^c}({\Delta X(s)})\ge \frac{\epsilon}{6}\right)\le \frac{\epsilon^\prime}{2}. 
\end{align*}
Applying this together with \eqref{proof:convergence_technical3c} to \eqref{proof:convergence_technical3b} shows
\begin{align} \label{eq.last_estimate}
P\left(\int_0^t\int_{\overline{B}_H(1/m)^c}T_f(X(s-),h)\,\left(\mu^{X_n}({\rm d}s,{\rm d}h)-\mu^{X}({\rm d}s,{\rm d}h)\right\vert \ge \frac{\epsilon}{3}\right)\le \epsilon^\prime.
\end{align}
By applying Equations \eqref{proof:convergence_technical2},\eqref{proof:convergence_technical3} and \eqref{eq.last_estimate} to \eqref{proof:convergence_technical0}, the proof is now complete. 
\end{proof}

\begin{proof}[Proof of Theorem \ref{prop:ito_formula_mild}]
Let $(X_n)_{n\in\N}$ be the solutions to \eqref{eq:equation_approx}. According to Corollary \ref{le.yosida_convergence_uniform},  we can assume that $(X_n)_{n\in\N}$ converges both in $\mathcal{C}([0,T], L^p(\Omega,H))$ and uniformly on $[0,T]$ almost surely to the mild solution $X$, which has c\`adl\`ag paths.
Since $X_n$ is a strong solution to \eqref{eq:equation_approx} according to Proposition \ref{pro.weak-is-strong}, 
the Itô formula in Theorem  \ref{thm:ito_formula} implies for all $t\geq 0$ and $n\in\mathbb{N}$ that 
\begin{align}\label{proof:ito_mild2}
    f(X_n(t))&=
f(R_nx_0)+\int_0^t\langle G(X_n(s-))^*R_n^*Df(X_n(s-)),\cdot\rangle \,{\rm d}L(s)\notag\\
&\qquad +\int_0^t\langle Df(X_n(s)), AX_n(s) \rangle \,{\rm d}s +\int_0^t\langle Df(X_n(s)), R_nF(X_n(s)) \rangle \,{\rm d}s\\
    &\qquad +\int_0^t\int_H\bigg(f(X_n(s-)+h)-f(X_n(s-))-\langle Df(X_n(s-)),h\rangle\bigg) \, \mu^{X_n}({\rm d}s, {\rm d}h) .\notag
\end{align}
 Continuity of $f$ shows that 
$f(X_n(t))\to f(X(t))$ and  $f(R_nx_0)\to f(x_0)$ a.s.  Inequality \eqref{eq:moment_estimate_integral} implies for the first integral in \eqref{proof:ito_mild2} that 
\begin{align*}
    \E&\left[\norm{\int_0^t\langle G(X_n(s-))^*R_n^*Df(X_n(s-)) ,\cdot\rangle \,{\rm d}L(s)-\int_0^t\langle G(X(s-))^*Df(X(s-)) ,\cdot\rangle\, {\rm d}L(s)}\right]\\
    &\leq e_{1,\alpha}\left(\E\left[\int_0^t\norm{G(X_n(s))^*R_n^*Df(X_n(s))-G(X(s))^*Df(X(s))}_{\mathcal{L}_2(U,\mathbb{R})}^\alpha\, {\rm d}s\right]\right)^{1/\alpha},
\end{align*}
which tends to zero by a similar argument as in the proof of Lemma \ref{le.generator_convergence}. It follows in $L_P^1(\Omega,\mathbb{R})$ that 
\begin{align*}
\lim_{n\to\infty} \int_0^t\langle G(X_n(s-))^*R_n^*Df(X_n(s-)),\cdot\rangle \,{\rm d}L(s)=\int_0^t\langle G(X(s-))^*Df(X(s-)) ,\cdot\rangle \,{\rm d}L(s). 
\end{align*}
 Lemma \ref{le.convergence_technical} implies in $L_P^0(\Omega,\mathbb{R})$ that 
\begin{align*}
& \lim_{n\to\infty} \int_0^t\int_H \big(f(X_n(s-)+h)-f(X_n(s-))-\langle Df(X_n(s-)),h\rangle\big)\, \mu^{X_n}({\rm d}s, {\rm d}h)\\
    &\qquad =\int_0^t\int_H\big(f(X(s-)+h)-f(X(s-))-\langle Df(X(s-)),h\rangle \big)\, \mu^{X}({\rm d}s, {\rm d}h).  
\end{align*}
Convergence of $(X_n)_{n\in\N}$ and Lebesgue's dominated convergence theorem yields 
\begin{align*}
  \lim_{n\to\infty} \int_0^t\langle Df(X_n(s) ), R_nF(X_n(s)) \rangle \,{\rm d}s =\int_0^t\langle Df(X(s)),F(X(s)) \rangle\,{\rm d}s \quad \text{a.s.}
\end{align*}
As all terms in \eqref{proof:ito_mild2} converge in $L_P^0(\Omega,\mathbb{R})$, it follows that the remaining term 
\begin{align*}
    \int_0^t\langle Df(X_n(s)), AX_n(s) \,\rangle{\rm d}s
\end{align*}
also converges in $L_P^0(\Omega,\mathbb{R})$, which completes the proof.
\end{proof}

\section{Appendix}

\begin{Lemma}\label{le.uniform_convergence}
Let $V$ be a separable Hilbert space with the norm $\norm{\cdot}_V$ and let $A_m\in\mathcal{L}(V)$ be a sequence of operators converging strongly to $0$. If $(B_n)_{n\in\mathbb{N}}$ is a tight sequence of uniformly bounded $V$-valued random variables, then it follows for all $p>0$ that
\[\lim_{m \rightarrow \infty}\sup_{n \in \mathbb{N}} \E\left[\norm{A_mB_n}_V^p\right]=0.\]
\end{Lemma}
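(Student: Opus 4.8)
The plan is to reduce everything to the elementary fact that a strongly (hence pointwise) convergent sequence of operators converges \emph{uniformly on compact subsets} of $V$, and then to use tightness to discard the mass of $B_n$ outside a large compact set. First I would apply the Banach--Steinhaus theorem: since $(A_m)_{m\in\mathbb{N}}$ converges strongly, it is pointwise bounded, so $C:=\sup_{m\in\mathbb{N}}\norm{A_m}_{\mathcal{L}(V)}<\infty$. Let $M\in(0,\infty)$ be a uniform almost-sure bound for $\norm{B_n}_V$, which exists by assumption.

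Next I would prove that $\lim_{m\to\infty}\sup_{v\in K}\norm{A_mv}_V=0$ for every compact $K\subseteq V$. Given $\delta>0$, cover $K$ by finitely many balls centred at $v_1,\dots,v_\ell$ of radius $\delta$; for $v\in K$ with $\norm{v-v_i}_V\le\delta$ one has $\norm{A_mv}_V\le\norm{A_mv_i}_V+C\delta$, hence $\sup_{v\in K}\norm{A_mv}_V\le\max_{1\le i\le\ell}\norm{A_mv_i}_V+C\delta$; letting $m\to\infty$ and then $\delta\to0$ gives the claim. Then, fixing $\varepsilon>0$, tightness of $(B_n)_{n\in\mathbb{N}}$ supplies a compact $K_\varepsilon\subseteq V$ with $P(B_n\notin K_\varepsilon)\le\varepsilon$ for all $n\in\mathbb{N}$, and splitting the expectation over $\{B_n\in K_\varepsilon\}$ and its complement yields
\[
\E\left[\norm{A_mB_n}_V^p\right]\le\sup_{v\in K_\varepsilon}\norm{A_mv}_V^p+C^pM^p\varepsilon
\]
for every $m,n\in\mathbb{N}$. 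Taking $\sup_n$ and then $\limsup_{m\to\infty}$, the first term vanishes by the previous step, so $\limsup_{m\to\infty}\sup_{n\in\mathbb{N}}\E[\norm{A_mB_n}_V^p]\le C^pM^p\varepsilon$; since $\varepsilon>0$ is arbitrary, the limit is $0$.

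I do not expect a genuine obstacle here: the argument is short and self-contained. The only point requiring a little care is the second step, the upgrade from pointwise to locally uniform convergence, which is exactly where the uniform bound $C$ furnished by Banach--Steinhaus is used; everything else is a routine split-the-expectation estimate combined with the tightness (Prokhorov-type) hypothesis on $(B_n)_{n\in\mathbb{N}}$.
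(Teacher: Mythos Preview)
Your proof is correct and follows essentially the same approach as the paper: both arguments split the expectation over $\{B_n\in K_\varepsilon\}$ and its complement, invoke uniform convergence of $A_m$ on compacts for the first piece, and use the uniform bound together with tightness for the second. Your version is in fact slightly more explicit, spelling out the Banach--Steinhaus step and the finite-cover argument for locally uniform convergence that the paper merely quotes.
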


\begin{proof}
Let $\epsilon>0$ be fixed. Our assumptions guarantee that there exists a constant $c>0$ such that 
$\sup_{n,m \in \mathbb{N}} \norm{A_m B_n}_V^p\leq c$ a.s.\ and  a compact set $K_\epsilon \subseteq V$ satisfying $P(B_n \notin K_\epsilon)<\frac{\epsilon}{c}$ for every $n\in\mathbb{N}$. Since continuous mapping converging to zero converges uniformly on compacts there exists $m_1 \in \mathbbm{N}$ such that for all $m\geq m_1$ we have
\begin{equation*} 
    \sup_{n \in \mathbb{N}} \int_{\{B_n \in K_\epsilon\}}\norm{A_mB_n(\omega)}_V^p\, P({\rm d}\omega)<\epsilon.
\end{equation*}
It follows for all $n\in\N$ and $m\ge m_1$ that 
\begin{align*}
 \E\left[\norm{A_mB_n}_V^p\right]\leq  \int_{\{B_n \in K_\epsilon\}}\norm{A_mB_n(\omega)}_V^p\, P({\rm d}\omega)+ \int_{\{B_n \notin K_\epsilon\}}\norm{A_mB_n(\omega)}_V^p\, P({\rm d}\omega)\le 2 \epsilon, 
\end{align*}
which completes the proof. 
\end{proof}


\begin{thebibliography}{10}
	
	\bibitem{AGMRS}
	S.~Albeverio, L.~Gawarecki, V.~Mandrekar, B.~R\"{u}diger, and B.~Sarkar.
	\newblock It\^{o} formula for mild solutions of {SPDE}s with {G}aussian and
	non-{G}aussian noise and applications to stability properties.
	\newblock {\em Random Oper. Stoch. Equ.}, 25(2):79--105, 2017.
	
	\bibitem{amman}
	H.~Amman and J.~Escher.
	\newblock {\em Analysis II}.
	\newblock Birkhäuer Verlag, Basel-Boston-Berlin, 2006.
	
	\bibitem{AR}
	D.~Applebaum and M.~Riedle.
	\newblock Cylindrical {L}\'{e}vy processes in {B}anach spaces.
	\newblock {\em Proc. Lond. Math. Soc. (3)}, 101(3):697--726, 2010.
	
	\bibitem{bergh2012interpolation}
	J{\"o}ran Bergh and J{\"o}rgen L{\"o}fstr{\"o}m.
	\newblock {\em Interpolation spaces. {A}n introduction}, volume 223.
	\newblock Springer Science \& Business Media, 2012.
	
	\bibitem{BR}
	G.~Bod{\'o} and M.~Riedle.
	\newblock Stochastic integration with respect to canonical $\alpha$-stable
	cylindrical {L}{\'e}vy processes.
	\newblock {\em Electronic Journal of Probability}, 27:1--23, 2022.
	
	\bibitem{Brzezniak_Zabczyk}
	Z.~Brze\'zniak and J.~Zabczyk.
	\newblock Regularity of {O}rnstein-{U}hlenbeck processes driven by a {L}\'evy
	white noise.
	\newblock {\em Potential Anal.}, 32(2):153--188, 2010.
	
	\bibitem{Chong_stochastic_PDEs}
	C.~Chong.
	\newblock Stochastic {PDE}s with heavy-tailed noise.
	\newblock {\em Stochastic Process. Appl.}, 127(7):2262--2280, 2017.
	
	\bibitem{Chong_Dalang_Humeau}
	C.~Chong, R.~C. Dalang, and T.~Humeau.
	\newblock Path properties of the solution to the stochastic heat equation with
	{L}{\'e}vy noise.
	\newblock {\em Stochastics and Partial Differential Equations: Analysis and
		Computations}, Aug 2018.
	
	\bibitem{da2019mild}
	G.~Da~Prato, A.~Jentzen, and M.~R{\"o}ckner.
	\newblock A mild {I}t{\^o} formula for {SPDE}s.
	\newblock {\em Transactions of the American Mathematical Society},
	372(6):3755--3807, 2019.
	
	\bibitem{DZ}
	G.~Da~Prato and J.~Zabczyk.
	\newblock {\em Stochastic equations in infinite dimensions}.
	\newblock Encyclopedia of Mathematics and its Applications. Cambridge
	University Press, Cambridge, 1992.
	
	\bibitem{dellacherie_meyer_1978}
	C.~Dellacherie and P.~A. Meyer.
	\newblock {\em Probabilities and potential}.
	\newblock North-Holland Publishing Co., Amsterdam, 1978.
	
	\bibitem{DM}
	S.~Diaz and F.~Mayoral.
	\newblock On compactness in spaces of {B}ochner integrable functions.
	\newblock {\em Acta Math. Hungar.}, 83(3):231--239, 1999.
	
	\bibitem{EN}
	K.~J. Engel and R.~Nagel.
	\newblock {\em One-parameter semigroups for linear evolution equations}.
	\newblock Springer-Verlag, New York, 2000.
	
	\bibitem{GV}
	I.~M. Gel'fand and N.~Ya. Vilenkin.
	\newblock {\em Generalized functions. {V}ol. 4: {A}pplications of harmonic
		analysis}.
	\newblock New York: Academic Press, 1964.
	
	\bibitem{HA}
	M.~Haase.
	\newblock {\em The functional calculus for sectorial operators}.
	\newblock Birkh\"{a}user Verlag, Basel, 2006.
	
	\bibitem{ICH}
	A.~Ichikawa.
	\newblock Semilinear stochastic evolution equations: boundedness, stability and
	invariant measures.
	\newblock {\em Stochastics}, 12(1):1--39, 1984.
	
	\bibitem{JS}
	J.~Jacod and A.~N. Shiryaev.
	\newblock {\em Limit theorems for stochastic processes}.
	\newblock Springer-Verlag, Berlin, second edition, 2003.
	
	\bibitem{JKFR}
	A.~Jakubowski, S.~Kwapień, P.~Raynaud~de Fitte, and J.~Rosiński.
	\newblock Radonification of cylindrical semimartingales by a single
	{H}ilbert-{S}chmidt operator.
	\newblock {\em Infinite Dimensional Analysis Quantum Probability and Related
		Topics}, 5:429--440, 09 2002.
	
	\bibitem{JR}
	A.~Jakubowski and M.~Riedle.
	\newblock Stochastic integration with respect to cylindrical {L}\'{e}vy
	processes.
	\newblock {\em Ann. Probab.}, 45(6B):4273--4306, 2017.
	
	\bibitem{OK}
	O.~Kallenberg.
	\newblock {\em Foundations of modern probability}.
	\newblock Springer, New York, 2002.
	
	\bibitem{kelley}
	L.~J. Kelley.
	\newblock {\em General Topology}.
	\newblock Graduate Texts in Mathematics. Springer- Verlag, New York-Berlin,
	27th edition, 1975.
	
	\bibitem{HI}
	H.~Komatsu.
	\newblock Fractional powers of operators.
	\newblock {\em Pacific J. Math.}, 19:285--346, 1966.
	
	\bibitem{KR}
	T.~Kosmala and M.~Riedle.
	\newblock Stochastic evolution equations driven by cylindrical stable noise.
	\newblock {\em Stochastic Process. Appl.}, 149:278--307, 2022.
	
	\bibitem{LI}
	W.~Linde.
	\newblock {\em Probability in {B}anach spaces---stable and infinitely divisible
		distributions}.
	\newblock John Wiley \& Sons, Ltd., Chichester, second edition, 1986.
	
	\bibitem{maslowski1995stability}
	B.~Maslowski.
	\newblock Stability of semilinear equations with boundary and pointwise noise.
	\newblock {\em Annali della Scuola Normale Superiore di Pisa-Classe di
		Scienze}, 22(1):55--93, 1995.
	
	\bibitem{ME}
	M.~M\'etivier.
	\newblock {\em {S}emimartingales: A course on stochastic processes}.
	\newblock {W}alter {D}e {G}ruyter, 1982.
	
	\bibitem{mueller1998heat}
	C.~Mueller.
	\newblock The heat equation with {L}{\'e}vy noise.
	\newblock {\em Stochastic Processes and Their Applications}, 74(1):67--82,
	1998.
	
	\bibitem{mytnik2002stochastic}
	L.~Mytnik.
	\newblock Stochastic partial differential equation driven by stable noise.
	\newblock {\em Probability Theory and Related Fields}, 123(2):157--201, 2002.
	
	\bibitem{PA}
	K.~R. Parthasarathy.
	\newblock {\em Probability measures on metric spaces}.
	\newblock Academic Press, New York, 1967.
	
	\bibitem{PZ}
	S.~Peszat and J.~Zabczyk.
	\newblock {\em Stochastic partial differential equations with {L}\'{e}vy
		noise}, volume 113 of {\em Encyclopedia of Mathematics and its Applications}.
	\newblock Cambridge University Press, Cambridge, 2007.
	
	\bibitem{PSZ}
	G.~P\'{o}lya and G.~Szeg\H{o}.
	\newblock {\em Problems and theorems in analysis. {I}}.
	\newblock Classics in Mathematics. Springer-Verlag, Berlin, 1998.
	
	\bibitem{PR}
	P.~Protter.
	\newblock {\em Stochastic integration and differential equations}.
	\newblock Springer-Verlag, Berlin, second edition, 2005.
	
	\bibitem{RI}
	M.~Riedle.
	\newblock Stable cylindrical {L}\'evy processes and the stochastic {C}auchy
	problem.
	\newblock {\em Electronic Communications in Probability}, 23, 2018.
	
	\bibitem{RW}
	J~Rosinski and W.A. Woyczynski.
	\newblock On {I}t{\^o} stochastic integration with respect to p-stable motion:
	inner clock, integrability of sample paths, double and multiple integrals.
	\newblock {\em The Annals of Probability}, pages 271--286, 1986.
	
	\bibitem{RF}
	H.~L. Royden and P.~Fitzpatrick.
	\newblock {\em Real analysis}.
	\newblock China Machine Press, fourth edition, 2017.
	
	\bibitem{Schwartz}
	L.~Schwartz.
	\newblock {\em Geometry and probability in {B}anach spaces}, volume 852 of {\em
		Lecture Notes in Mathematics}.
	\newblock Springer-Verlag, Berlin-New York, 1981.
	\newblock Based on notes taken by Paul R. Chernoff.
	
	\bibitem{S}
	I.~E. Segal.
	\newblock Abstract probability spaces and a theorem of {K}olmogoroff.
	\newblock {\em Amer. J. Math.}, 76:721--732, 1954.
	
	\bibitem{NAFO}
	B.~Sz.-Nagy, C.~Foias, H.~Bercovici, and L.~K\'erchy.
	\newblock {\em Harmonic Analysis of Operators on {H}ilbert space}.
	\newblock Springer, New York, second edition, 2010.
	
	\bibitem{VTC}
	N.~N. Vakhania, V.~I. Tarieladze, and S.~A. Chobanyan.
	\newblock {\em Probability distributions on {B}anach spaces}, volume~14 of {\em
		Mathematics and its Applications (Soviet Series)}.
	\newblock D. Reidel Publishing Co., Dordrecht, 1987.
	
	\bibitem{willett1965discrete}
	D.~Willett and J.S.W. Wong.
	\newblock On the discrete analogues of some generalizations of {G}ronwall's
	inequality.
	\newblock {\em Monatshefte f{\"u}r Mathematik}, 69:362--367, 1965.
	
	\bibitem{williams1991probability}
	D.~Williams.
	\newblock {\em Probability with Martingales}.
	\newblock Cambridge University Press, 1991.
	
	\bibitem{ZX}
	H.~Zhao and S.~Xu.
	\newblock A stochastic {F}ubini theorem for {$\alpha$}-stable process.
	\newblock {\em Statist. Probab. Lett.}, 160, 2020.
	
\end{thebibliography}
\end{document}